\documentclass[a4paper, 11pt]{article}
\usepackage[top=30truemm, bottom=30truemm, left=25truemm, right=25truemm]{geometry}
\usepackage{amsmath, amsthm, amssymb}

\theoremstyle{definition}
\newtheorem{definition}{Definition}[section]
\newtheorem{example}[definition]{Example}

\theoremstyle{plain}
\newtheorem{lemma}[definition]{Lemma}
\newtheorem{proposition}[definition]{Proposition}
\newtheorem{theorem}[definition]{Theorem}
\newtheorem{corollary}[definition]{Corollary}

\title{Bivariate Affine $q$-Krawtchouk Polynomials and Association Schemes over Galois Rings}
\author{Yuta Watanabe}
\date{July 2026}

\begin{document}

\maketitle

\begin{center}
Dedicated to Paul Terwilliger on his 70th birthday
\end{center}

\begin{abstract}
We introduce regularized bivariate affine $q$-Krawtchouk polynomials and show that they give the first eigenmatrix of a translation association scheme on $\operatorname{Mat}_{d\times n}(\operatorname{GR}(p^2,r))$.
The relations are defined by Smith type, and the corresponding eigenvalues are written as character sums over Smith type classes.
The proof is based on recurrence relations obtained from the transition numbers describing how Smith types change when one row and one column are added to a matrix.
These recurrences identify the character sums with the regularized bivariate affine $q$-Krawtchouk polynomials.
\end{abstract}

\section{Introduction}

The first eigenmatrix, or character table, of the Hamming association scheme is classically expressed in terms of the Krawtchouk polynomials \cite{BBIT,D73}.
Several generalizations of these polynomials have been studied.
One direction is the multivariate extension, which arises in ordered Hamming schemes and constructions related to character algebras \cite{MT}.
Another is the $q$-analog; for the bilinear forms scheme, the first eigenmatrix is described by the affine $q$-Krawtchouk polynomials \cite{D78}.

In this paper we study a translation association scheme on
$X=\operatorname{Mat}_{d\times n}(\operatorname{GR}(p^2,r))$,
where the relations are defined by Smith type.
In this setting, the first eigenmatrix is expressed in terms of regularized bivariate affine $q$-Krawtchouk polynomials.
Kawamura~\cite{K} studied multivariate Krawtchouk and Hahn polynomials from the viewpoint of harmonic analysis on a non-Archimedean local field and representations of wreath products.
Guo and Li~\cite{GL} studied Smith normal forms, orbit decompositions, and orbit lengths for matrices over Galois rings, with applications including association schemes.
Their construction of an association scheme is based on $1\times n$ matrices over a Galois ring.
The present paper instead treats $d\times n$ matrices and determines the first eigenmatrix of the resulting translation association scheme.

Let $q=p^r$ and $R=\operatorname{GR}(p^2,r)$.
We regard $X=\operatorname{Mat}_{d\times n}(R)$ as an additive group.
A matrix over $R$ has Smith type $(i,j)$ if its Smith normal form has $i$ invariant factors equal to $p$, $j$ invariant factors equal to $1$, and all remaining invariant factors equal to $0$.
The relations of the scheme are defined by the Smith type of the difference of two matrices.
We introduce regularized bivariate affine $q$-Krawtchouk polynomials and prove that their values give the entries of the first eigenmatrix of this scheme.

A technical point is the specialization of the polynomial parameter $a$.
The value needed for the association scheme is $a=q^d$,
while the usual hypergeometric expression for the affine $q$-Krawtchouk polynomials may have apparent singularities of this form.
We therefore use a regularized form of the affine $q$-Krawtchouk polynomials.
The bivariate polynomials in this paper are defined in terms of this regularized form, so that the specialization $a=q^d$ is well-defined.

The proof of the main theorem is based on recurrence relations.
The regularized bivariate affine $q$-Krawtchouk polynomials satisfy two recurrence relations derived from those for the regularized one-variable affine $q$-Krawtchouk polynomials.
The eigenvalues of the association scheme are written as character sums over Smith type classes.
The main enumeration determines the transition numbers describing how Smith types change when one row and one column are added to a matrix.
These transition numbers are then used to show that the character sums satisfy the same recurrence relations as the regularized bivariate affine $q$-Krawtchouk polynomials.

This paper is organized as follows.
Section~2 reviews affine $q$-Krawtchouk polynomials and introduces the regularized form needed for later specialization.
Section~3 defines the regularized bivariate affine $q$-Krawtchouk polynomials and establishes their recurrence
relations.
Section~4 introduces the association scheme on matrices over Galois rings, states the main theorem, and includes a worked example for $\operatorname{Mat}_{2\times2}(\mathbb{Z}_4)$.
Section~5 carries out the key enumeration of matrices needed for the proof.
Section~6 derives the recurrence relations for the eigenvalue sums and proves the main theorem.
The finite-field counting lemmas used in Section~5 are collected in the appendix.

\section{Affine $q$-Krawtchouk polynomials}

Let $\mathbb K$ be a field, and let
$q\in\mathbb K^\times$ be not a root of unity. 
We use the standard notation for $q$-shifted factorials
\[
(x;q)_0=1,\qquad
(x;q)_h=\prod_{\nu=0}^{h-1}(1-xq^\nu)
\quad (h\geq 1),
\]
and the basic hypergeometric series notation
\[
{}_3\phi_2
\left(
\begin{matrix}
x_1,x_2,x_3\\
y_1,y_2
\end{matrix}
;q,z
\right)
=
\sum_{h=0}^\infty
\frac{(x_1;q)_h(x_2;q)_h(x_3;q)_h}
     {(y_1;q)_h(y_2;q)_h(q;q)_h}
z^h.
\]
For $0\leq m\leq n$, the $q$-binomial coefficient is defined by
\[
\begin{bmatrix} n\\ m \end{bmatrix}_q
=
\frac{(q;q)_n}{(q;q)_m(q;q)_{n-m}},
\]
and we set
\[
\begin{bmatrix} n\\ m \end{bmatrix}_q=0
\quad\text{if } m<0 \text{ or } m>n.
\]
We shall use the following elementary identity for the $q$-binomial coefficients:
\begin{equation}
\label{eq:qbinom-switch}
\begin{bmatrix} n\\ m_1 \end{bmatrix}_q
\begin{bmatrix} n-m_1\\ m_2 \end{bmatrix}_q
=
\begin{bmatrix} n\\ m_2 \end{bmatrix}_q
\begin{bmatrix} n-m_2\\ m_1 \end{bmatrix}_q,
\qquad
(m_1,m_2 \geq 0,\quad m_1+m_2 \le n).
\end{equation}
This follows immediately from the definition, since both sides are equal to
\[
\frac{(q;q)_n}
     {(q;q)_{m_1}(q;q)_{m_2}(q;q)_{n-m_1-m_2}}.
\]
Under the above assumption on $q$,
the $q$-shifted factorials in the denominators that depend only on $q$,
such as $(q;q)_h$ and $(q^{-n};q)_h$,
are nonzero in the ranges used below.

We recall the definition and basic properties of the affine $q$-Krawtchouk polynomials from \cite[Section~14.16]{KLS}.
For $n \in \mathbb{Z}_{\ge 0}$ and for a generic value of $a\in\mathbb K^\times$, namely when
\begin{equation}
\label{eq:a-generic-assumption}
a\ne q^\alpha
\qquad
(0\le \alpha < n),
\end{equation}
the normalized affine $q$-Krawtchouk polynomials of size $n$ are given by
\[
K_i^{\mathrm{aff}}(j;a,n;q)
=
W_i(a;n,q)\,
{}_3\phi_2
\left(
\begin{matrix}
q^{-i},0,q^{-j}\\
q^{-n},a^{-1}
\end{matrix}
;q,q
\right),
\qquad
(0\leq i,j\leq n),
\]
where
\[
W_i(a;n,q)
=
\begin{bmatrix} n\\ i \end{bmatrix}_q
\prod_{\nu=0}^{i-1}(a-q^\nu).
\]
Here and in what follows, the empty product is understood to be $1$.
These are normalized versions of the usual affine $q$-Krawtchouk
polynomials.

The genericity assumption \eqref{eq:a-generic-assumption}
is needed only for the hypergeometric expression above. In the
normalized expression, the apparent singularities at the values
$a=q^\alpha$ are removable. Indeed, for $0\le h\le i$, we have
\[
\frac{\prod_{\nu=0}^{i-1}(a-q^\nu)}{(a^{-1};q)_h}
=
a^h\prod_{\nu=h}^{i-1}(a-q^\nu).
\]
Thus each summand in the normalized expression can be rewritten without
the factor $(a^{-1};q)_h$ in the denominator.

Motivated by this cancellation, we define the regularized affine
$q$-Krawtchouk polynomial by
\begin{equation}
\label{eq:regularized-formula}
\widetilde K_i^{\mathrm{aff}}(j;a,n;q)
=
\begin{bmatrix} n\\ i \end{bmatrix}_q
\sum_{h=0}^{\min\{i,j\}}
\frac{(q^{-i};q)_h(q^{-j};q)_h}
     {(q^{-n};q)_h(q;q)_h}
a^h q^h \prod_{\nu=h}^{i-1}(a-q^\nu).
\end{equation}
The right-hand side of \eqref{eq:regularized-formula} is a polynomial in $a$,
and hence it is well-defined at the exceptional values $a=q^\alpha$.

\begin{lemma}
For $n \in \mathbb{Z}_{\ge 0}$ and $a\in\mathbb K^\times$ satisfying the genericity assumption \eqref{eq:a-generic-assumption}, we have
\[
\widetilde K_i^{\mathrm{aff}}(j;a,n;q)
=
K_i^{\mathrm{aff}}(j;a,n;q),
\qquad
(0 \le i,j \le n).
\]
In particular, $\widetilde K_i^{\mathrm{aff}}(j;a,n;q)$ gives a
regularized extension of the normalized affine $q$-Krawtchouk
polynomial to the values of $a$ at which the hypergeometric
expression may be singular.
\end{lemma}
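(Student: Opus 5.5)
The plan is to expand the ${}_3\phi_2$ in the definition of $K_i^{\mathrm{aff}}(j;a,n;q)$ and compare it term by term with \eqref{eq:regularized-formula}. Under \eqref{eq:a-generic-assumption} the quantities $(a^{-1};q)_h$ are nonzero for $0\le h\le n$. The numerator parameter $q^{-i}$ forces $(q^{-i};q)_h=0$ for $h>i$, and $(q^{-j};q)_h=0$ for $h>j$. So the series is a finite sum over $0\le h\le\min\{i,j\}$. The parameter $0$ contributes $(0;q)_h=1$. The denominator $(q^{-n};q)_h$ is nonzero for $h\le n$, as remarked before the statement. Hence
\[
K_i^{\mathrm{aff}}(j;a,n;q)=\begin{bmatrix} n\\ i\end{bmatrix}_q\sum_{h=0}^{\min\{i,j\}}
\frac{(q^{-i};q)_h(q^{-j};q)_h}{(q^{-n};q)_h(q;q)_h}\,
\frac{\prod_{\nu=0}^{i-1}(a-q^\nu)}{(a^{-1};q)_h}\,q^h .
\]

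Next I apply the cancellation identity stated before the lemma: for $0\le h\le i$,
\[
\frac{\prod_{\nu=0}^{i-1}(a-q^\nu)}{(a^{-1};q)_h}=a^h\prod_{\nu=h}^{i-1}(a-q^\nu).
\]
For completeness I would verify it in one line. Write $(a^{-1};q)_h=\prod_{\nu=0}^{h-1}(1-a^{-1}q^\nu)=a^{-h}\prod_{\nu=0}^{h-1}(a-q^\nu)$. Since $h\le i$, this product of $h$ factors divides the product of $i$ factors in the numerator. Dividing leaves $a^h\prod_{\nu=h}^{i-1}(a-q^\nu)$.

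Substituting this into each summand gives exactly the summand of \eqref{eq:regularized-formula}, which proves the equality for all $0\le i,j\le n$.

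There is no serious obstacle. The only point needing care is to check that every denominator used is nonzero in the relevant range of $h$. For $(a^{-1};q)_h$ with $h\le\min\{i,j\}\le n$, one needs $a\neq q^\nu$ for $\nu<h\le n$, which is exactly \eqref{eq:a-generic-assumption}. For $(q^{-n};q)_h$ and $(q;q)_h$, nonvanishing follows from $q$ not being a root of unity. One also has to confirm that truncating the series at $\min\{i,j\}$ is legitimate, which holds because the numerator vanishes beyond that point while the denominators stay nonzero for $h\le n$. The ``in particular'' statement is then immediate, since the right-hand side of \eqref{eq:regularized-formula} is a polynomial in $a$.
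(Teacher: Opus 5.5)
Your proposal is correct and follows the same route as the paper's proof: expand the ${}_3\phi_2$ using $(0;q)_h=1$, truncate at $h=\min\{i,j\}$ because $(q^{-i};q)_h$ and $(q^{-j};q)_h$ vanish, and apply the cancellation identity termwise. Your one-line verification of that identity and your check that $(a^{-1};q)_h$ and $(q^{-n};q)_h$ are nonzero in the relevant range are details the paper leaves implicit, but they do not change the argument.
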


\begin{proof}
Expanding the basic hypergeometric series and using $(0;q)_h=1$, we
obtain
\[
K_i^{\mathrm{aff}}(j;a,n;q)
=
\begin{bmatrix} n\\ i \end{bmatrix}_q
\sum_{h=0}^\infty
\frac{(q^{-i};q)_h(q^{-j};q)_h}
     {(q^{-n};q)_h(q;q)_h}
q^h
\frac{\prod_{\nu=0}^{i-1}(a-q^\nu)}
     {(a^{-1};q)_h}.
\]
Since $(q^{-i};q)_h=0$ for $h>i$ and
$(q^{-j};q)_h=0$ for $h>j$, the sum terminates at
$h=\min\{i,j\}$.
For $0\leq h\leq i$,
\[
\frac{\prod_{\nu=0}^{i-1}(a-q^\nu)}
     {(a^{-1};q)_h}
=
a^h \prod_{\nu=h}^{i-1}(a-q^\nu).
\]
Substituting this identity into the preceding expansion yields precisely
$\widetilde K_i^{\mathrm{aff}}(j;a,n;q)$.
\end{proof}

In what follows, we write
\[
K_i^{\mathrm{aff}}(j;a,n;q)
\]
for the regularized polynomial
$\widetilde K_i^{\mathrm{aff}}(j;a,n;q)$.
Thus the standing assumptions that $q\in\mathbb K^\times$ is not a
root of unity and $a\in\mathbb K^\times$ remain in force, whereas the
genericity assumption \eqref{eq:a-generic-assumption}, which was
required only for the hypergeometric expression, is not imposed unless
explicitly stated. In particular, the specializations $a=q^m$,
including $a=1$, are allowed.
For convenience, we set
\[
K_i^{\mathrm{aff}}(j;a,n;q) = 0
\qquad \text{if $i < 0$ or $i > n$.}
\]

For the purpose of extending the identities below, we regard $a$ as an indeterminate.
The identities for affine $q$-Krawtchouk polynomials used below are known for generic values of the parameter $a$.
After replacing $K_i^{\mathrm{aff}}$ by the regularized polynomial,
the relevant identities become polynomial identities in $a$, after clearing denominators if necessary.
Hence they may be specialized to any $a\in\mathbb K^\times$.

For $n \in \mathbb{Z}_{\ge 0}$ and $a\in\mathbb K^\times$,
the following initial conditions for the regularized affine $q$-Krawtchouk polynomials hold without imposing the genericity assumption \eqref{eq:a-generic-assumption}:
\begin{equation}
\label{aff-initial}
K_0^{\mathrm{aff}}(j;a,n;q)=1
\quad (0\leq j\leq n),
\qquad
K_i^{\mathrm{aff}}(0;a,n;q)=W_i(a;n,q)
\quad (0\leq i\leq n).
\end{equation}

For $n\in\mathbb Z_{\ge0}$ and for generic values of $a$,
namely for $a\in\mathbb K^\times$ satisfying \eqref{eq:a-generic-assumption},
these polynomials also satisfy the orthogonality relation
\begin{equation}
\label{aff-orthogonal}
\sum_{j=0}^{n}
K_i^{\mathrm{aff}}(j;a,n;q)
K_{i'}^{\mathrm{aff}}(j;a,n;q)
W_j(a;n,q)
=
\delta_{i,i'}\,a^n W_i(a;n,q),
\qquad
(0 \leq i,i' \leq n).
\end{equation}
Since both sides of \eqref{aff-orthogonal} are polynomials in $a$ and agree for generic $a$, the identity holds for all $a\in\mathbb K^\times$.
At specializations excluded from the generic hypergeometric definition, some weights may vanish.
Therefore, we do not use the non-degeneracy of this orthogonality relation in the sequel.

For $n\in\mathbb Z_{\ge0}$ and $a\in\mathbb K^\times$, the
regularized affine $q$-Krawtchouk polynomials satisfy the following
recurrence relations. These identities are first obtained for generic
$a$ from the usual affine $q$-Krawtchouk polynomials and then
extended to all $a\in\mathbb K^\times$ by regularization.

For $0\leq i\leq n$ and $0\leq j\leq n$, the three-term recurrence is
\begin{equation}
\label{aff-three-term}
\begin{aligned}
a q^n(q^{-j}-1)\,K_i^{\mathrm{aff}}(j;a,n;q)
&= q^i(q^{i+1}-1)\,
   K_{i+1}^{\mathrm{aff}}(j;a,n;q) \\
&\quad
-\Bigl\{(q^n-q^i)(a-q^i)
      +q^{i-1}(q^i-1)\Bigr\}
  K_i^{\mathrm{aff}}(j;a,n;q) \\
&\quad
+(q^n-q^{i-1})(a-q^{i-1})\,
 K_{i-1}^{\mathrm{aff}}(j;a,n;q).
\end{aligned}
\end{equation}

For $0\leq i\leq n+1$ and $0\leq j\leq n$, the forward shift relation is
\begin{equation}
\label{aff-forward}
K_i^{\mathrm{aff}}(j+1;aq,n+1;q)
-
K_i^{\mathrm{aff}}(j;aq,n+1;q)
=
-aq^{n-j+1}
K_{i-1}^{\mathrm{aff}}(j;a,n;q).
\end{equation}

For $0\leq i\leq n+1$ and $0\leq j\leq n$, the backward shift relation is
\begin{equation}
\label{aff-backward}
q^iK_i^{\mathrm{aff}}(j;a,n;q)
-
q^{i-1}K_{i-1}^{\mathrm{aff}}(j;a,n;q)
=
K_i^{\mathrm{aff}}(j+1;aq,n+1;q).
\end{equation}
For $0\leq i\leq n$, this is the usual backward shift relation. The
endpoint $i=n+1$ follows from the same identity together with the
convention $K_i^{\mathrm{aff}}(j;a,n;q)=0$ for $i>n$.
Alternatively, it follows by direct substitution into the regularized expression.

\begin{lemma}
\label{lem:aff-reduction}
Let $n \in \mathbb{Z}_{\ge 0}$ and $a\in\mathbb K^\times$.
For $0\leq i\leq n+1$ and $0\leq j\leq n$, the following relation holds:
\begin{equation*}
\begin{aligned}
K_i^{\mathrm{aff}}(j;aq,n+1;q)
&= q^{2i}\,K_i^{\mathrm{aff}}(j;a,n;q) \\
&\quad
+q^i\bigl(q^n+a-q^{i-1}-q^{i-2}\bigr)\,
 K_{i-1}^{\mathrm{aff}}(j;a,n;q) \\
&\quad
+q\bigl(q^n-q^{i-2}\bigr)\bigl(a-q^{i-2}\bigr)\,
 K_{i-2}^{\mathrm{aff}}(j;a,n;q).
\end{aligned}
\end{equation*}
\end{lemma}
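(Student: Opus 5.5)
The plan is to combine the forward shift relation \eqref{aff-forward} with the backward shift relation \eqref{aff-backward}, which produces a $j$-dependent term. That term is then removed with the three-term recurrence \eqref{aff-three-term}. Throughout, $j$ and $n$ are fixed and the arguments $(j;a,n;q)$ are suppressed on the right-hand side.

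\emph{Step 1 (the case $i=0$).} This case is immediate: by \eqref{aff-initial} both sides equal $1$, and the terms $K_{-1}$ and $K_{-2}$ vanish by convention. So assume $1\le i\le n+1$.

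\emph{Step 2 (combine the two shift relations).} Rewrite \eqref{aff-forward} as
\[
K_i^{\mathrm{aff}}(j;aq,n+1;q)=K_i^{\mathrm{aff}}(j+1;aq,n+1;q)+aq^{n-j+1}K_{i-1}^{\mathrm{aff}}(j;a,n;q).
\]
Substituting \eqref{aff-backward} for the first term on the right gives
\[
K_i^{\mathrm{aff}}(j;aq,n+1;q)=q^iK_i-q^{i-1}K_{i-1}+aq^{n-j+1}K_{i-1}.
\]

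\emph{Step 3 (eliminate $q^{-j}$).} Apply \eqref{aff-three-term} with index $i-1$, which is allowed since $0\le i-1\le n$; here $K_{n+1}(j;a,n;q)=0$ by convention. Write $aq^{n-j+1}=aq^{n+1}+q\cdot aq^n(q^{-j}-1)$. This yields
\[
aq^{n-j+1}K_{i-1}=aq^{n+1}K_{i-1}+q^i(q^i-1)K_i-q\Bigl\{(q^n-q^{i-1})(a-q^{i-1})+q^{i-2}(q^{i-1}-1)\Bigr\}K_{i-1}+q(q^n-q^{i-2})(a-q^{i-2})K_{i-2}.
\]

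\emph{Step 4 (collect coefficients).} Insert the expression from Step 3 into Step 2 and compare coefficients.
\begin{itemize}
\item The coefficient of $K_i$ is $q^i+q^{2i}-q^i=q^{2i}$.
\item The coefficient of $K_{i-2}$ is already $q(q^n-q^{i-2})(a-q^{i-2})$.
\item The coefficient of $K_{i-1}$ is
\[
-q^{i-1}+aq^{n+1}-aq^{n+1}+q^{n+i}+aq^{i}-q^{2i-1}-q^{2i-2}+q^{i-1}=q^i\bigl(q^n+a-q^{i-1}-q^{i-2}\bigr).
\]
\end{itemize}
These are exactly the coefficients claimed in the lemma.

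All identities used hold for every $a\in\mathbb K^\times$ in their regularized form, so no genericity assumption is needed. The only delicate point is bookkeeping of the index ranges at the endpoints. At $i=1$, the term $K_{-1}$ appears and is zero by convention. At $i=n+1$, the terms $K_{n+1}(j;a,n;q)=0$ and $K_{n}$ appear, and \eqref{aff-backward} and \eqref{aff-forward} are valid there as stated. I expect this endpoint check to be the only step needing care; the coefficient algebra above is routine.
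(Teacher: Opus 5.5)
Your proposal is correct and follows the paper's proof step for step. Both use the forward shift \eqref{aff-forward}, then the backward shift \eqref{aff-backward}, then the three-term recurrence \eqref{aff-three-term} at index $i-1$, multiplied by $q$, to remove the $q^{-j}$ term, followed by the same coefficient bookkeeping; your explicit check that $K_{n+1}(j;a,n;q)=0$ keeps the endpoint $i=n+1$ consistent is a small clarification the paper leaves implicit.
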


\begin{proof}
The case $i=0$ follows from the initial condition \eqref{aff-initial} and the convention
$K_i^{\mathrm{aff}}(j;a,n;q)=0$ for $i<0$. Hence, we may assume
$i\geq 1$.

By the forward shift relation \eqref{aff-forward}, we have
\[
K_i^{\mathrm{aff}}(j;aq,n+1;q)
=
aq^{n-j+1}K_{i-1}^{\mathrm{aff}}(j;a,n;q)
+
K_i^{\mathrm{aff}}(j+1;aq,n+1;q).
\]
Applying the backward shift relation \eqref{aff-backward} to the second term on the
right-hand side gives
\[
K_i^{\mathrm{aff}}(j;aq,n+1;q)
=
aq^{n-j+1}K_{i-1}^{\mathrm{aff}}(j;a,n;q)
+
q^iK_i^{\mathrm{aff}}(j;a,n;q)
-
q^{i-1}K_{i-1}^{\mathrm{aff}}(j;a,n;q).
\]
Thus
\begin{equation*}
\begin{aligned}
K_i^{\mathrm{aff}}(j;aq,n+1;q)
&=
aq^{n+1}(q^{-j}-1)
K_{i-1}^{\mathrm{aff}}(j;a,n;q) \\
&\quad
+\bigl(aq^{n+1}-q^{i-1}\bigr)
K_{i-1}^{\mathrm{aff}}(j;a,n;q) \\
&\quad
+q^iK_i^{\mathrm{aff}}(j;a,n;q).
\end{aligned}
\end{equation*}

We now apply the three-term recurrence \eqref{aff-three-term} with $i$ replaced by $i-1$,
and then multiply the resulting identity by $q$. This gives
\begin{equation*}
\begin{aligned}
&aq^{n+1}(q^{-j}-1)
K_{i-1}^{\mathrm{aff}}(j;a,n;q) \\
&\quad =
q^i(q^i-1)\,
K_i^{\mathrm{aff}}(j;a,n;q) \\
&\qquad
-q\Bigl\{(q^n-q^{i-1})(a-q^{i-1})
      +q^{i-2}(q^{i-1}-1)\Bigr\}
K_{i-1}^{\mathrm{aff}}(j;a,n;q) \\
&\qquad
+q(q^n-q^{i-2})(a-q^{i-2})\,
K_{i-2}^{\mathrm{aff}}(j;a,n;q).
\end{aligned}
\end{equation*}

Substituting this into the previous expression, the coefficient of
$K_i^{\mathrm{aff}}(j;a,n;q)$ becomes
\[
q^i(q^i-1)+q^i=q^{2i}.
\]
The coefficient of $K_{i-1}^{\mathrm{aff}}(j;a,n;q)$ is
\[
\begin{aligned}
&-q\Bigl\{(q^n-q^{i-1})(a-q^{i-1})
      +q^{i-2}(q^{i-1}-1)\Bigr\}
+aq^{n+1}-q^{i-1} \\
&\qquad
=
q^i\bigl(q^n+a-q^{i-1}-q^{i-2}\bigr).
\end{aligned}
\]
Therefore,
\[
\begin{aligned}
K_i^{\mathrm{aff}}(j;aq,n+1;q)
&= q^{2i}\,K_i^{\mathrm{aff}}(j;a,n;q) \\
&\quad
+q^i\bigl(q^n+a-q^{i-1}-q^{i-2}\bigr)\,
 K_{i-1}^{\mathrm{aff}}(j;a,n;q) \\
&\quad
+q\bigl(q^n-q^{i-2}\bigr)\bigl(a-q^{i-2}\bigr)\,
 K_{i-2}^{\mathrm{aff}}(j;a,n;q).
\end{aligned}
\]
\end{proof}

We record for later use a simple consequence of the regularized formula at the specializations $a=q^d$.

\begin{lemma}
\label{lem:aff-boundary-vanishing}
Let $d,n \in \mathbb Z_{\ge0}$ and let $0\le j \le d < i \le n$.
Then
\[
K^{\mathrm{aff}}_i(j;q^d,n;q)=0.
\]
\end{lemma}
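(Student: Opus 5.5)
The plan is to substitute $a=q^d$ directly into the regularized formula \eqref{eq:regularized-formula} and show that every summand vanishes. Since $i\le n$, the prefactor $\begin{bmatrix} n\\ i\end{bmatrix}_q$ is an ordinary $q$-binomial coefficient, and the denominators $(q^{-n};q)_h(q;q)_h$ are nonzero for $0\le h\le \min\{i,j\}\le n$. So no singularity issues arise, and it suffices to show that each term
\[
\frac{(q^{-i};q)_h(q^{-j};q)_h}{(q^{-n};q)_h(q;q)_h}\,q^{dh} q^h \prod_{\nu=h}^{i-1}(q^d-q^\nu)
\]
is zero for $0\le h\le \min\{i,j\}$.

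The key step is the following observation. The range of summation is $0\le h\le \min\{i,j\}=j$, because $j\le d<i$. For every such $h$ we have $h\le j\le d\le i-1$. Hence the index $\nu=d$ lies in the range $h\le\nu\le i-1$ of the product $\prod_{\nu=h}^{i-1}(q^d-q^\nu)$. That factor is $q^d-q^d=0$, so the product vanishes for every $h$ in the sum. Therefore every summand is zero, and so is the whole expression, which gives $K^{\mathrm{aff}}_i(j;q^d,n;q)=0$.

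There is essentially no obstacle. The only point needing care is confirming that the range of $h$ is governed by $j$ rather than $i$, which uses the hypothesis $j\le d$. We do not need to invoke the hypergeometric form or any genericity assumption, since the regularized expression is a polynomial in $a$ and may be evaluated at $a=q^d$ directly.
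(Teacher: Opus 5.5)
Your proposal is correct and follows essentially the same route as the paper. Both substitute $a=q^d$ into the regularized formula \eqref{eq:regularized-formula}, note that every summation index satisfies $h\le j\le d\le i-1$, and conclude that the factor $\nu=d$ makes $\prod_{\nu=h}^{i-1}(q^d-q^\nu)$ vanish in every summand.
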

\begin{proof}
In each summand of the regularized formula \eqref{eq:regularized-formula}, the summation index $h$ satisfies $h\le j\le d$.
Since $d<i$, the product
\[
\prod_{\nu=h}^{i-1}(q^d-q^\nu)
\]
contains the factor corresponding to $\nu=d$, namely $q^d-q^d=0$.
Hence every summand is zero.
\end{proof}

\section{Bivariate affine $q$-Krawtchouk polynomials}

This section uses the regularized affine $q$-Krawtchouk polynomials
introduced in Section~2. All identities below are understood in this
regularized sense.
In particular, identities obtained initially for generic $a$ will be
used after being extended as polynomial identities in $a$, so that
specializations such as $a=q^m$, including $a=1$, are allowed.

Fix $n \in \mathbb{Z}_{\ge 0}$ and $a\in\mathbb K^\times$ and let
\[
I_n
=
\{(i_1,i_2)\in \mathbb{Z}_{\geq 0}^2
\mid i_1+i_2\leq n\}.
\]

\begin{definition}
\label{def:biaff-qK}
For $(i_1,i_2),(j_1,j_2)\in I_n$, the bivariate affine
$q$-Krawtchouk polynomial is defined by
\[
K_{i_1,i_2}^{\mathrm{biaff}}(j_1,j_2;a,n;q)
=
a^{i_2}q^{i_2(n-i_2)}
K_{i_1}^{\mathrm{aff}}(j_2;aq^{-i_2},n-i_2;q)
K_{i_2}^{\mathrm{aff}}(j_1;aq^{-j_2},n-j_2;q).
\]
Here $K_i^{\mathrm{aff}}$ denotes the regularized affine
$q$-Krawtchouk polynomial from Section~2.
\end{definition}

By the convention for the affine $q$-Krawtchouk polynomials, if
$i_2+j_2>n$, then the second affine factor has degree index $i_2$
larger than its size $n-j_2$. Hence
\[
K_{i_1,i_2}^{\mathrm{biaff}}(j_1,j_2;a,n;q)=0
\qquad (i_2+j_2>n).
\]
We also use the convention that
\[
K_{i_1,i_2}^{\mathrm{biaff}}(j_1,j_2;a,n;q)=0
\]
whenever $(i_1,i_2)\notin I_n$.

With $a$ fixed, the limit as $q\to1$ of the polynomials in Definition~\ref{def:biaff-qK} gives, up to normalization and a parameter identification, a bivariate Krawtchouk system of Griffiths~\cite{G}.

For $(i_1,i_2)\in I_n$, define
\[
W_{i_1,i_2}(a;n,q)
=
\begin{bmatrix} n\\ i_2 \end{bmatrix}_q
\begin{bmatrix} n-i_2\\ i_1 \end{bmatrix}_q
a^{i_2}q^{i_2(n-i_1-i_2)}
\prod_{\nu=0}^{i_1+i_2-1}(a-q^\nu).
\]
The weight admits the factorization
\begin{equation}
\label{eq:W-factorization}
W_{i_1,i_2}(a;n,q)
=
a^{i_2}q^{i_2(n-i_2)}
W_{i_2}(a;n,q)
W_{i_1}(aq^{-i_2};n-i_2,q).
\end{equation}
Indeed,
\[
W_{i_1}(aq^{-i_2};n-i_2,q)
=
\begin{bmatrix} n-i_2\\ i_1 \end{bmatrix}_q
q^{-i_1i_2}
\prod_{\nu=i_2}^{i_1+i_2-1}(a-q^\nu),
\]
and hence the displayed factorization follows directly from the
definition of $W_{i_1,i_2}(a;n,q)$.

\begin{proposition}
Assume the genericity assumption \eqref{eq:a-generic-assumption}.
For $(i_1,i_2), (i'_1,i'_2) \in I_n$, the bivariate affine $q$-Krawtchouk polynomials
satisfy
\[
\sum_{(j_1,j_2)\in I_n}
K_{i_1,i_2}^{\mathrm{biaff}}(j_1,j_2;a,n;q)
K_{i'_1,i'_2}^{\mathrm{biaff}}(j_1,j_2;a,n;q)
W_{j_1,j_2}(a;n,q)
=
\delta_{i_1,i'_1}\delta_{i_2,i'_2}
a^{2n}W_{i_1,i_2}(a;n,q).
\]
After regularization, the displayed identity extends as a polynomial
identity in $a$. At specializations excluded from the generic
hypergeometric definition, some weights may vanish; hence we do not
use the non-degeneracy of this orthogonality relation in the sequel.
\end{proposition}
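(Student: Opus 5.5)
The plan is to reduce the bivariate identity to two applications of the one-variable orthogonality \eqref{aff-orthogonal}. We sum first over $j_1$ with $j_2$ fixed, then over $j_2$. The key algebraic input is a symmetric rearrangement of the weights, obtained from the $q$-binomial switch identity \eqref{eq:qbinom-switch}.

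\textbf{Step 1: sum over $j_1$.} We write $\sum_{(j_1,j_2)\in I_n}=\sum_{j_2=0}^{n}\sum_{j_1=0}^{n-j_2}$ and use the factorization \eqref{eq:W-factorization} with the roles of the indices read off for $(j_1,j_2)$:
\[
W_{j_1,j_2}(a;n,q)=a^{j_2}q^{j_2(n-j_2)}W_{j_2}(a;n,q)W_{j_1}(aq^{-j_2};n-j_2,q).
\]
For fixed $j_2$, the only $j_1$-dependence in the summand is
\[
K_{i_2}^{\mathrm{aff}}(j_1;aq^{-j_2},n-j_2;q)\,K_{i_2'}^{\mathrm{aff}}(j_1;aq^{-j_2},n-j_2;q)\,W_{j_1}(aq^{-j_2};n-j_2,q).
\]
The parameter $aq^{-j_2}$ satisfies the genericity condition for size $n-j_2$. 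Indeed, $aq^{-j_2}=q^\alpha$ with $\alpha<n-j_2$ would force $a=q^{\alpha+j_2}$ with $\alpha+j_2<n$, which is excluded by \eqref{eq:a-generic-assumption}. So \eqref{aff-orthogonal} applies and yields
\[
\delta_{i_2,i_2'}\,(aq^{-j_2})^{n-j_2}\,W_{i_2}(aq^{-j_2};n-j_2,q).
\]
This holds also when $i_2>n-j_2$, since then both sides vanish by the convention $K_i^{\mathrm{aff}}=0$ for $i>n$. Hence the whole sum vanishes unless $i_2=i_2'$, and from now on we take $i_2=i_2'$.

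\textbf{Step 2: rearrange the remaining $j_2$-weight (main obstacle).} This is the step where the $j_2$-weight has to be recognised as a one-variable affine weight, and I expect it to be the main obstacle. After Step 1, the $j_2$-summand is
\[
a^{2i_2}q^{2i_2(n-i_2)}\,K_{i_1}^{\mathrm{aff}}(j_2;aq^{-i_2},n-i_2;q)\,K_{i_1'}^{\mathrm{aff}}(j_2;aq^{-i_2},n-i_2;q)
\]
times the weight
\[
a^{j_2}q^{j_2(n-j_2)}(aq^{-j_2})^{n-j_2}\,W_{j_2}(a;n,q)\,W_{i_2}(aq^{-j_2};n-j_2,q).
\]
The scalar prefactor collapses to $a^n$, which is independent of $j_2$. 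For the product of weights, expand as in the verification of \eqref{eq:W-factorization}:
\[
W_{j_2}(a;n,q)W_{i_2}(aq^{-j_2};n-j_2,q)=\begin{bmatrix} n\\ j_2\end{bmatrix}_q\begin{bmatrix} n-j_2\\ i_2\end{bmatrix}_q q^{-i_2j_2}\prod_{\nu=0}^{i_2+j_2-1}(a-q^\nu).
\]
By \eqref{eq:qbinom-switch} this expression is symmetric in $(i_2,j_2)$. Therefore it equals $W_{i_2}(a;n,q)\,W_{j_2}(aq^{-i_2};n-i_2,q)$. Terms with $j_2>n-i_2$ vanish on both sides, so the $j_2$-sum effectively runs over $0\le j_2\le n-i_2$.

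\textbf{Step 3: sum over $j_2$ and collect powers.} We now have
\[
a^{2i_2+n}q^{2i_2(n-i_2)}W_{i_2}(a;n,q)\sum_{j_2=0}^{n-i_2}K_{i_1}^{\mathrm{aff}}(j_2;aq^{-i_2},n-i_2;q)\,K_{i_1'}^{\mathrm{aff}}(j_2;aq^{-i_2},n-i_2;q)\,W_{j_2}(aq^{-i_2};n-i_2,q).
\]
The parameter $aq^{-i_2}$ is again generic for size $n-i_2$, so \eqref{aff-orthogonal} gives $\delta_{i_1,i_1'}(aq^{-i_2})^{n-i_2}W_{i_1}(aq^{-i_2};n-i_2,q)$. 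Collecting the powers of $a$ and $q$ gives
\[
a^{2n}\cdot a^{i_2}q^{i_2(n-i_2)}\,W_{i_2}(a;n,q)\,W_{i_1}(aq^{-i_2};n-i_2,q),
\]
which equals $a^{2n}W_{i_1,i_2}(a;n,q)$ by \eqref{eq:W-factorization}.

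\textbf{Extension to all $a$.} Both sides are polynomials in $a$ built from the regularized $K_i^{\mathrm{aff}}$ and the weights. Since they agree for generic $a$, the identity extends to all $a\in\mathbb K^\times$ as a polynomial identity, as asserted in the statement.
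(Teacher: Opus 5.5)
Your proposal is correct and follows the paper's own proof. You sum over $j_1$ first, using the factorization \eqref{eq:W-factorization} and the one-variable orthogonality with parameter $aq^{-j_2}$ and size $n-j_2$. You then swap the weight product via \eqref{eq:qbinom-switch}, apply orthogonality again with parameter $aq^{-i_2}$ and size $n-i_2$, and reassemble with \eqref{eq:W-factorization}. Your checks that the shifted parameters stay generic and that the boundary terms with $i_2>n-j_2$ vanish on both sides are details the paper leaves implicit.
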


\begin{proof}
For generic $a$, the proof is obtained by applying the affine
orthogonality relation \eqref{aff-orthogonal} twice.

By Definition~\ref{def:biaff-qK}, the left-hand side of the desired
orthogonality relation is
\[
\begin{aligned}
&\sum_{(j_1,j_2)\in I_n}
K_{i_1,i_2}^{\mathrm{biaff}}(j_1,j_2;a,n;q)
K_{i'_1,i'_2}^{\mathrm{biaff}}(j_1,j_2;a,n;q)
W_{j_1,j_2}(a;n,q)\\
&=
\sum_{(j_1,j_2)\in I_n}
a^{i_2+i'_2}
q^{i_2(n-i_2)+i'_2(n-i'_2)}
K_{i_1}^{\mathrm{aff}}(j_2;aq^{-i_2},n-i_2;q)
K_{i'_1}^{\mathrm{aff}}(j_2;aq^{-i'_2},n-i'_2;q)\\
&\qquad\qquad\qquad\qquad
\times
K_{i_2}^{\mathrm{aff}}(j_1;aq^{-j_2},n-j_2;q)
K_{i'_2}^{\mathrm{aff}}(j_1;aq^{-j_2},n-j_2;q)
W_{j_1,j_2}(a;n,q).
\end{aligned}
\]
Using the factorization \eqref{eq:W-factorization} with $(i_1,i_2)$ replaced by $(j_1,j_2)$, we first fix
$j_2$ and sum over $j_1$. Applying \eqref{aff-orthogonal} with
parameter $aq^{-j_2}$ and size $n-j_2$, we obtain
\[
\begin{aligned}
&\sum_{j_1=0}^{n-j_2}
K_{i_2}^{\mathrm{aff}}(j_1;aq^{-j_2},n-j_2;q)
K_{i'_2}^{\mathrm{aff}}(j_1;aq^{-j_2},n-j_2;q)
W_{j_1}(aq^{-j_2};n-j_2,q)\\
&\qquad
=
\delta_{i_2,i'_2}
(aq^{-j_2})^{n-j_2}
W_{i_2}(aq^{-j_2};n-j_2,q).
\end{aligned}
\]
Therefore the original sum becomes
\[
\begin{aligned}
&\delta_{i_2,i'_2}
a^{n+2i_2}q^{2i_2(n-i_2)}
\sum_{j_2=0}^{n-i_2}
K_{i_1}^{\mathrm{aff}}(j_2;aq^{-i_2},n-i_2;q)
K_{i'_1}^{\mathrm{aff}}(j_2;aq^{-i_2},n-i_2;q)\\
&\qquad\qquad\qquad\qquad
\times
W_{j_2}(a;n,q)
W_{i_2}(aq^{-j_2};n-j_2,q).
\end{aligned}
\]

We next rewrite the remaining product of weights.
By the $q$-binomial identity
\eqref{eq:qbinom-switch}, together with the product definitions of the
one-variable weights, we have
\[
W_{j_2}(a;n,q)W_{i_2}(aq^{-j_2};n-j_2,q)
=
W_{i_2}(a;n,q)W_{j_2}(aq^{-i_2};n-i_2,q).
\]
Indeed, the equality of the $q$-binomial factors is exactly
\eqref{eq:qbinom-switch}, and the remaining product on each side is
\[
q^{-i_2j_2}\prod_{\nu=0}^{i_2+j_2-1}(a-q^\nu).
\]
Thus the sum becomes
\[
\begin{aligned}
&\delta_{i_2,i'_2}
a^{n+2i_2}q^{2i_2(n-i_2)}
W_{i_2}(a;n,q)\\
&\quad\times
\sum_{j_2=0}^{n-i_2}
K_{i_1}^{\mathrm{aff}}(j_2;aq^{-i_2},n-i_2;q)
K_{i'_1}^{\mathrm{aff}}(j_2;aq^{-i_2},n-i_2;q)
W_{j_2}(aq^{-i_2};n-i_2,q).
\end{aligned}
\]
Applying the affine orthogonality relation \eqref{aff-orthogonal} a
second time, now with parameter $aq^{-i_2}$ and size $n-i_2$, gives
\[
\delta_{i_1,i'_1}
(aq^{-i_2})^{n-i_2}
W_{i_1}(aq^{-i_2};n-i_2,q).
\]
Hence the whole expression is
\[
\delta_{i_1,i'_1}\delta_{i_2,i'_2}
a^{2n+i_2}q^{i_2(n-i_2)}
W_{i_2}(a;n,q)
W_{i_1}(aq^{-i_2};n-i_2,q).
\]
Finally, by the factorization \eqref{eq:W-factorization},
\[
a^{i_2}q^{i_2(n-i_2)}
W_{i_2}(a;n,q)
W_{i_1}(aq^{-i_2};n-i_2,q)
=
W_{i_1,i_2}(a;n,q).
\]
Therefore the expression is
\[
\delta_{i_1,i'_1}\delta_{i_2,i'_2}
a^{2n}W_{i_1,i_2}(a;n,q),
\]
for generic $a$. Since both sides are polynomials in $a$, the identity extends to the regularized polynomials by the argument in Section~2.
\end{proof}

\begin{lemma}
\label{lem:biaff-initial}
The bivariate affine $q$-Krawtchouk polynomials satisfy
\[
K_{0,0}^{\mathrm{biaff}}(j_1,j_2;a,n;q)=1
\quad ((j_1,j_2)\in I_n),
\]
and
\[
K_{i_1,i_2}^{\mathrm{biaff}}(0,0;a,n;q)
=
W_{i_1,i_2}(a;n,q)
\quad ((i_1,i_2)\in I_n).
\]
\end{lemma}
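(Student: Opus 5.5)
The plan is to substitute directly into Definition~\ref{def:biaff-qK} and apply the one-variable initial conditions \eqref{aff-initial}. Neither the recurrences nor the orthogonality relation is needed.

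\emph{First identity.} Take $(i_1,i_2)=(0,0)$. Then the prefactor is $a^{0}q^{0}=1$, and
\[
K_{0,0}^{\mathrm{biaff}}(j_1,j_2;a,n;q)=K_0^{\mathrm{aff}}(j_2;a,n;q)\,K_0^{\mathrm{aff}}(j_1;aq^{-j_2},n-j_2;q).
\]
Since $(j_1,j_2)\in I_n$, we have $0\le j_2\le n$ and $0\le j_1\le n-j_2$. Both factors are therefore in the range where \eqref{aff-initial} gives the value $1$. This case can equally be read off from the regularized formula \eqref{eq:regularized-formula}: only the summand $h=0$ survives, and it consists of the empty product together with $\begin{bmatrix} n\\ 0\end{bmatrix}_q=1$.

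\emph{Second identity.} Take $(j_1,j_2)=(0,0)$. The definition then gives
\[
K_{i_1,i_2}^{\mathrm{biaff}}(0,0;a,n;q)=a^{i_2}q^{i_2(n-i_2)}K_{i_1}^{\mathrm{aff}}(0;aq^{-i_2},n-i_2;q)\,K_{i_2}^{\mathrm{aff}}(0;a,n;q).
\]
For $(i_1,i_2)\in I_n$ we have $0\le i_2\le n$ and $0\le i_1\le n-i_2$, so \eqref{aff-initial} applies to both factors and yields
\[
a^{i_2}q^{i_2(n-i_2)}\,W_{i_1}(aq^{-i_2};n-i_2,q)\,W_{i_2}(a;n,q).
\]
By the factorization \eqref{eq:W-factorization}, this equals $W_{i_1,i_2}(a;n,q)$.

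\emph{Where care is needed.} The main point is to confirm that \eqref{aff-initial} holds without the genericity assumption \eqref{eq:a-generic-assumption}. This matters because the parameters here are the shifted values $aq^{-i_2}$ and $aq^{-j_2}$, which can hit the exceptional set even when $a$ does not. To settle it, I would check the claim directly from \eqref{eq:regularized-formula}. When $j=0$, only $h=0$ contributes, and that summand is $\begin{bmatrix} n\\ i\end{bmatrix}_q\prod_{\nu=0}^{i-1}(a-q^\nu)=W_i(a;n,q)$. When $i=0$, the sum reduces to $1$. Both computations hold for every $a\in\mathbb K^\times$.

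I would also confirm that \eqref{eq:W-factorization} is a plain identity in $a$. It is, since its verification in the text uses only the definitions of the weights and no division by $a-q^\nu$. With these two checks, the proof is just the two substitutions above.
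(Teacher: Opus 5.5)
Your proposal is correct and follows the paper's proof: substitute into Definition~\ref{def:biaff-qK}, apply the initial conditions \eqref{aff-initial} to both affine factors, and, for the second identity, invoke the factorization \eqref{eq:W-factorization}. Your added check that \eqref{aff-initial} holds for every $a\in\mathbb K^\times$, read off directly from the regularized formula, is sound; the paper asserts this when it states those initial conditions.
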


\begin{proof}
This follows directly from the initial conditions \eqref{aff-initial} for
$K_i^{\mathrm{aff}}$, the factorization \eqref{eq:W-factorization}, and Definition~\ref{def:biaff-qK}.
\end{proof}

\begin{lemma}
\label{lem:biaff_RR2}
Assume that $j_2\geq 1$. For
$(i_1,i_2),(j_1,j_2)\in I_n$, the following relation holds:
\[
\begin{aligned}
K_{i_1,i_2}^{\mathrm{biaff}}(j_1,j_2;a,n;q)
&=
q^{i_1+2i_2}
K_{i_1,i_2}^{\mathrm{biaff}}(j_1,j_2-1;aq^{-1},n-1;q)\\
&\quad
-
q^{i_1+2i_2-1}
K_{i_1-1,i_2}^{\mathrm{biaff}}(j_1,j_2-1;aq^{-1},n-1;q).
\end{aligned}
\]
\end{lemma}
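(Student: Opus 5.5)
The plan is to reduce the identity directly to the backward shift relation \eqref{aff-backward}. The point is that the second affine factor in Definition~\ref{def:biaff-qK} is the same on both sides, so only the first factor needs a genuine identity.

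First I expand the right-hand side using Definition~\ref{def:biaff-qK} with parameters $aq^{-1}$, size $n-1$, and second argument $j_2-1$. The second affine factor becomes
\[
K_{i_2}^{\mathrm{aff}}\bigl(j_1;aq^{-1}q^{-(j_2-1)},n-1-(j_2-1);q\bigr)
=K_{i_2}^{\mathrm{aff}}(j_1;aq^{-j_2},n-j_2;q).
\]
This is exactly the second factor on the left-hand side, and the same holds for the $K_{i_1-1,i_2}^{\mathrm{biaff}}$ term. The prefactor is
\[
(aq^{-1})^{i_2}q^{i_2(n-1-i_2)}=a^{i_2}q^{i_2(n-i_2)}q^{-2i_2}.
\]
Multiplying by $q^{i_1+2i_2}$ and $q^{i_1+2i_2-1}$ leaves $q^{i_1}$ and $q^{i_1-1}$ times the left-hand prefactor $a^{i_2}q^{i_2(n-i_2)}$.

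After cancelling the common factor $a^{i_2}q^{i_2(n-i_2)}K_{i_2}^{\mathrm{aff}}(j_1;aq^{-j_2},n-j_2;q)$, set $b=aq^{-i_2}$ and $m=n-i_2$. The claim then reduces to
\[
K_{i_1}^{\mathrm{aff}}(j_2;b,m;q)=q^{i_1}K_{i_1}^{\mathrm{aff}}(j_2-1;bq^{-1},m-1;q)-q^{i_1-1}K_{i_1-1}^{\mathrm{aff}}(j_2-1;bq^{-1},m-1;q).
\]
This is \eqref{aff-backward} with $a\mapsto bq^{-1}$, $n\mapsto m-1$ and $j\mapsto j_2-1$. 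It is valid for $0\le i_1\le m$ and $0\le j_2-1\le m-1$.

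The main obstacle is the bookkeeping of ranges and conventions, which I would handle as follows.
\begin{itemize}
\item If $i_2+j_2>n$, the common second factor vanishes by the convention $K_i^{\mathrm{aff}}=0$ for $i$ larger than the size. Both sides are then $0$, and there is nothing to prove.
\item Otherwise $1\le j_2\le m$, so $m-1\ge 0$ and $0\le j_2-1\le m-1$. Also $0\le i_1\le m$, which lies within the range $i\le (m-1)+1$ allowed by \eqref{aff-backward}.
\item If $i_1=0$, the $K_{i_1-1}$ term vanishes on both the biaff and affine sides by the conventions.
\item If $i_1+i_2=n$, then $(i_1,i_2)\notin I_{n-1}$, so the first biaff term on the right is $0$ by convention. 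This agrees with the affine expansion, since $K_{i_1}^{\mathrm{aff}}(\cdot;\cdot,n-1-i_2;q)=0$ when $i_1>n-1-i_2$.
\end{itemize}
With these checks, the bivariate identity follows from the one-variable backward shift in all cases.
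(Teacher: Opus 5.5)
Your proposal is correct and follows the paper's own proof. The paper likewise first disposes of the case $i_2+j_2>n$, then applies the backward shift relation \eqref{aff-backward} to the first affine factor with $i\mapsto i_1$, $j\mapsto j_2-1$, $a\mapsto aq^{-i_2-1}$, $n\mapsto n-i_2-1$, leaving the common second factor unchanged; your additional check that the boundary case $i_1+i_2=n$ is consistent with the convention $K^{\mathrm{biaff}}=0$ off $I_{n-1}$ is a detail the paper leaves implicit.
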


\begin{proof}
If $i_2+j_2>n$, then the left-hand side is zero by the convention
following Definition~\ref{def:biaff-qK}. Moreover,
\[
i_2+(j_2-1)>n-1,
\]
so both bivariate affine $q$-Krawtchouk polynomials on the
right-hand side are also zero by the same convention, now with size
$n-1$. Hence the relation is trivial in this case. We may therefore
assume that $i_2+j_2\leq n$.

By Definition~\ref{def:biaff-qK},
\[
K_{i_1,i_2}^{\mathrm{biaff}}(j_1,j_2;a,n;q)
=
a^{i_2}q^{i_2(n-i_2)}
K_{i_1}^{\mathrm{aff}}(j_2;aq^{-i_2},n-i_2;q)
K_{i_2}^{\mathrm{aff}}(j_1;aq^{-j_2},n-j_2;q).
\]
Applying the backward shift relation \eqref{aff-backward} with
\[
i\mapsto i_1,\qquad
j\mapsto j_2-1,\qquad
a\mapsto aq^{-i_2-1},\qquad
n\mapsto n-i_2-1,
\]
we obtain
\[
\begin{aligned}
K_{i_1}^{\mathrm{aff}}(j_2;aq^{-i_2},n-i_2;q)
&=
q^{i_1}
K_{i_1}^{\mathrm{aff}}(j_2-1;aq^{-i_2-1},n-i_2-1;q)\\
&\quad
-
q^{i_1-1}
K_{i_1-1}^{\mathrm{aff}}(j_2-1;aq^{-i_2-1},n-i_2-1;q).
\end{aligned}
\]
Substituting this into the definition above and rewriting the two
resulting terms by Definition~\ref{def:biaff-qK}, with the parameters
$aq^{-1}$ and $n-1$, gives
\[
\begin{aligned}
K_{i_1,i_2}^{\mathrm{biaff}}(j_1,j_2;a,n;q)
&=
q^{i_1+2i_2}
K_{i_1,i_2}^{\mathrm{biaff}}(j_1,j_2-1;aq^{-1},n-1;q)\\
&\quad
-
q^{i_1+2i_2-1}
K_{i_1-1,i_2}^{\mathrm{biaff}}(j_1,j_2-1;aq^{-1},n-1;q).
\end{aligned}
\]
\end{proof}

\begin{lemma}
\label{lem:biaff_RR1}
Assume that $j_1\geq 1$. For
$(i_1,i_2),(j_1,j_2)\in I_n$, the following relation holds:
\[
\begin{aligned}
K_{i_1,i_2}^{\mathrm{biaff}}(j_1,j_2;a,n;q)
&=
q^{2i_1+3i_2}
K_{i_1,i_2}^{\mathrm{biaff}}(j_1-1,j_2;aq^{-1},n-1;q)\\
&\quad
+
q^{i_1+2i_2-1}
\bigl(q^n+a-q^{i_1+i_2}-q^{i_1+i_2-1}\bigr)
K_{i_1-1,i_2}^{\mathrm{biaff}}(j_1-1,j_2;aq^{-1},n-1;q)\\
&\quad
+
q^{i_2-1}
\bigl(q^n-q^{i_1+i_2-1}\bigr)
\bigl(a-q^{i_1+i_2-1}\bigr)
K_{i_1-2,i_2}^{\mathrm{biaff}}(j_1-1,j_2;aq^{-1},n-1;q)\\
&\quad
-
aq^{n+i_2-2}
K_{i_1,i_2-1}^{\mathrm{biaff}}(j_1-1,j_2;aq^{-1},n-1;q).
\end{aligned}
\]
\end{lemma}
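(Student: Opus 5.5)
The plan is to prove the relation directly from Definition~\ref{def:biaff-qK}. The second affine factor will be handled by the backward shift relation \eqref{aff-backward}, and the first by Lemma~\ref{lem:aff-reduction}, in the same way Lemma~\ref{lem:biaff_RR2} uses only the first factor. Write the left-hand side as
\[
a^{i_2}q^{i_2(n-i_2)}
K_{i_1}^{\mathrm{aff}}(j_2;aq^{-i_2},n-i_2;q)\,
K_{i_2}^{\mathrm{aff}}(j_1;aq^{-j_2},n-j_2;q).
\]
Since $j_1\ge1$, apply \eqref{aff-backward} to the second factor with $i\mapsto i_2$, $j\mapsto j_1-1$, $a\mapsto aq^{-j_2-1}$, $n\mapsto n-j_2-1$. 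This is legitimate because $i_2\le n-j_2$ and $j_1-1\le n-j_2-1$. It gives
\[
K_{i_2}^{\mathrm{aff}}(j_1;aq^{-j_2},n-j_2;q)
=q^{i_2}K_{i_2}^{\mathrm{aff}}(j_1-1;aq^{-j_2-1},n-j_2-1;q)
-q^{i_2-1}K_{i_2-1}^{\mathrm{aff}}(j_1-1;aq^{-j_2-1},n-j_2-1;q).
\]

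The second piece keeps the first factor $K_{i_1}^{\mathrm{aff}}(j_2;aq^{-i_2},n-i_2;q)$ unchanged. This is exactly the first factor of $K_{i_1,i_2-1}^{\mathrm{biaff}}(j_1-1,j_2;aq^{-1},n-1;q)$. Comparing prefactors, the product $a^{i_2}q^{i_2(n-i_2)}\cdot q^{i_2-1}$ equals $aq^{n+i_2-2}$ times $(aq^{-1})^{i_2-1}q^{(i_2-1)(n-i_2)}$, since both are $a^{i_2}q^{i_2n-i_2^2+i_2-1}$. This produces the last term of the statement. When $i_2=0$, both sides of this piece vanish by the conventions.

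For the first piece, the target terms have first factor $K_{i_1-k}^{\mathrm{aff}}(j_2;aq^{-i_2-1},n-i_2-1;q)$ for $k=0,1,2$. I will therefore apply Lemma~\ref{lem:aff-reduction} with $a\mapsto aq^{-i_2-1}$, $n\mapsto n-i_2-1$, $i\mapsto i_1$, $j\mapsto j_2$. The lemma yields coefficients
\[
q^{2i_1},\quad
q^{i_1}\bigl(q^{n-i_2-1}+aq^{-i_2-1}-q^{i_1-1}-q^{i_1-2}\bigr),\quad
q\bigl(q^{n-i_2-1}-q^{i_1-2}\bigr)\bigl(aq^{-i_2-1}-q^{i_1-2}\bigr).
\]
The overall prefactor $a^{i_2}q^{i_2(n-i_2)}q^{i_2}$ differs from the bivariate prefactor $(aq^{-1})^{i_2}q^{i_2(n-1-i_2)}$ by the factor $q^{3i_2}$. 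Multiplying through by $q^{3i_2}$ gives $q^{2i_1+3i_2}$, $q^{i_1+2i_2-1}(q^n+a-q^{i_1+i_2}-q^{i_1+i_2-1})$ and $q^{i_2-1}(q^n-q^{i_1+i_2-1})(a-q^{i_1+i_2-1})$, which are precisely the first three coefficients in the statement.

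The main obstacle is the range bookkeeping, not the algebra. Lemma~\ref{lem:aff-reduction} requires $j_2\le n-i_2-1$ and $i_1\le n-i_2$. So I will treat the boundary case $i_2+j_2=n$ separately. In that case $K_{i_2}^{\mathrm{aff}}(j_1-1;\,\cdot\,,n-j_2-1;q)=0$ by convention, so the first piece vanishes. The first three terms on the right also vanish, by the convention after Definition~\ref{def:biaff-qK}, because $i_2+j_2>n-1$. Only the last term survives, and it matches the second piece computed above. In the generic case $i_2+j_2\le n-1$ the lemma applies directly. Terms with $i_1-1<0$ or $i_1-2<0$ are zero on both sides by the conventions, so no separate treatment is needed there.
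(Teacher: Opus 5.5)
Your proposal is correct and follows the paper's proof essentially step for step. The backward shift \eqref{aff-backward} on the second affine factor yields the last term. Lemma~\ref{lem:aff-reduction} with $a\mapsto aq^{-i_2-1}$, $n\mapsto n-i_2-1$ on the first factor yields the first three terms, and the case $i_2+j_2=n$ is treated separately; your prefactor and coefficient computations all check out.

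The one slip is your assertion that $i_2\le n-j_2$ holds automatically. The hypotheses $(i_1,i_2),(j_1,j_2)\in I_n$ allow $i_2+j_2>n$, for example $n=2$, $(i_1,i_2)=(0,2)$, $(j_1,j_2)=(1,1)$. Your split into $i_2+j_2=n$ and $i_2+j_2\le n-1$ never covers this case. It is trivial: the left side and all four right-hand terms vanish by the convention after Definition~\ref{def:biaff-qK}, the last one because $(i_2-1)+j_2>n-1$. The paper disposes of this case first for exactly that reason.
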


\begin{proof}
If $i_2+j_2>n$, then the left-hand side is zero by the convention
following Definition~\ref{def:biaff-qK}. On the right-hand side, the
first three terms are zero since
\[
i_2+j_2>n-1,
\]
when they are regarded as bivariate affine $q$-Krawtchouk polynomials
with size $n-1$. The last term is also zero, since
\[
(i_2-1)+j_2>n-1
\]
is equivalent to $i_2+j_2>n$. Hence the relation is trivial in this
case. We may therefore assume that $i_2+j_2\le n$.

By Definition~\ref{def:biaff-qK},
\[
K_{i_1,i_2}^{\mathrm{biaff}}(j_1,j_2;a,n;q)
=
a^{i_2}q^{i_2(n-i_2)}
K_{i_1}^{\mathrm{aff}}(j_2;aq^{-i_2},n-i_2;q)
K_{i_2}^{\mathrm{aff}}(j_1;aq^{-j_2},n-j_2;q).
\]
Applying the backward shift relation \eqref{aff-backward} with
\[
i\mapsto i_2,\qquad
j\mapsto j_1-1,\qquad
a\mapsto aq^{-j_2-1},\qquad
n\mapsto n-j_2-1,
\]
we get
\[
\begin{aligned}
K_{i_2}^{\mathrm{aff}}(j_1;aq^{-j_2},n-j_2;q)
&=
q^{i_2}
K_{i_2}^{\mathrm{aff}}(j_1-1;aq^{-j_2-1},n-j_2-1;q)\\
&\quad
-
q^{i_2-1}
K_{i_2-1}^{\mathrm{aff}}(j_1-1;aq^{-j_2-1},n-j_2-1;q).
\end{aligned}
\]
Thus
\[
\begin{aligned}
K_{i_1,i_2}^{\mathrm{biaff}}(j_1,j_2;a,n;q)
&=
a^{i_2}q^{i_2(n-i_2+1)}
K_{i_1}^{\mathrm{aff}}(j_2;aq^{-i_2},n-i_2;q)\\
&\quad\times
K_{i_2}^{\mathrm{aff}}(j_1-1;aq^{-j_2-1},n-j_2-1;q)\\
&\quad
-
a^{i_2}q^{i_2(n-i_2+1)-1}
K_{i_1}^{\mathrm{aff}}(j_2;aq^{-i_2},n-i_2;q)\\
&\quad\times
K_{i_2-1}^{\mathrm{aff}}(j_1-1;aq^{-j_2-1},n-j_2-1;q).
\end{aligned}
\]

By Definition~\ref{def:biaff-qK}, with parameter $aq^{-1}$ and size $n-1$, the second term is
\[
-
aq^{n+i_2-2}
K_{i_1,i_2-1}^{\mathrm{biaff}}(j_1-1,j_2;aq^{-1},n-1;q),
\]
which is the last term in the desired recurrence. It remains to rewrite the first term.

If $i_2+j_2=n$, then $i_2>n-j_2-1$. Hence
\[
K_{i_2}^{\mathrm{aff}}(j_1-1;aq^{-j_2-1},n-j_2-1;q)
=0
\]
by the convention for the affine $q$-Krawtchouk polynomials. Thus the
first term vanishes.
On the other hand, in the desired recurrence, the first three terms on
the right-hand side are also zero. Indeed, these terms are
\[
\begin{gathered}
K_{i_1,i_2}^{\mathrm{biaff}}(j_1-1,j_2;aq^{-1},n-1;q),\quad
K_{i_1-1,i_2}^{\mathrm{biaff}}(j_1-1,j_2;aq^{-1},n-1;q),\\
K_{i_1-2,i_2}^{\mathrm{biaff}}(j_1-1,j_2;aq^{-1},n-1;q).
\end{gathered}
\]
Each has size $n-1$ with second degree index $i_2$ and second
argument $j_2$. Since
\[
i_2+j_2=n>n-1,
\]
all three vanish by the convention following
Definition~\ref{def:biaff-qK}.
Hence, when $i_2+j_2=n$, only the rewritten second term remains, and it gives the desired recurrence.

We may therefore assume that $i_2+j_2<n$.
We use Lemma~\ref{lem:aff-reduction} with
\[
i\mapsto i_1,\qquad
j\mapsto j_2, \qquad
a\mapsto aq^{-i_2-1},
\qquad
n\mapsto n-i_2-1.
\]
This gives
\[
\begin{aligned}
K_{i_1}^{\mathrm{aff}}(j_2;aq^{-i_2},n-i_2;q)
&=
q^{2i_1}
K_{i_1}^{\mathrm{aff}}(j_2;aq^{-i_2-1},n-i_2-1;q)\\
&\quad
+
q^{i_1}
\bigl(q^{n-i_2-1}+aq^{-i_2-1}-q^{i_1-1}-q^{i_1-2}\bigr)\\
&\quad\qquad\times
K_{i_1-1}^{\mathrm{aff}}(j_2;aq^{-i_2-1},n-i_2-1;q)\\
&\quad
+
q
\bigl(q^{n-i_2-1}-q^{i_1-2}\bigr)
\bigl(aq^{-i_2-1}-q^{i_1-2}\bigr)\\
&\quad\qquad\times
K_{i_1-2}^{\mathrm{aff}}(j_2;aq^{-i_2-1},n-i_2-1;q).
\end{aligned}
\]
Substituting this expression into the first term above and rewriting
the three resulting terms by Definition~\ref{def:biaff-qK}, with
parameters $aq^{-1}$ and $n-1$, gives
\[
\begin{aligned}
& q^{2i_1+3i_2}
K_{i_1,i_2}^{\mathrm{biaff}}(j_1-1,j_2;aq^{-1},n-1;q)\\
&\quad
+
q^{i_1+2i_2-1}
\bigl(q^n+a-q^{i_1+i_2}-q^{i_1+i_2-1}\bigr)
K_{i_1-1,i_2}^{\mathrm{biaff}}(j_1-1,j_2;aq^{-1},n-1;q)\\
&\quad
+
q^{i_2-1}
\bigl(q^n-q^{i_1+i_2-1}\bigr)
\bigl(a-q^{i_1+i_2-1}\bigr)
K_{i_1-2,i_2}^{\mathrm{biaff}}(j_1-1,j_2;aq^{-1},n-1;q).
\end{aligned}
\]
Combining this with the contribution of the second term computed above,
we obtain the right-hand side of the desired recurrence.
\end{proof}

We record the following boundary vanishing property at the specializations $a=q^d$.
It will be used to justify the boundary terms in the induction proof of Theorem \ref{main}.

\begin{lemma}
\label{lem:biaff-boundary-vanishing}
Let $0\le d\le n$, and let $(i_1,i_2)\in I_n$ and $(j_1,j_2)\in I_d$.
If $i_1+i_2>d$, then
\[
K^{\mathrm{biaff}}_{i_1,i_2}(j_1,j_2;q^d,n;q)=0 .
\]
\end{lemma}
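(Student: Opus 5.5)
The plan is to write out Definition~\ref{def:biaff-qK} at $a=q^d$,
\[
K^{\mathrm{biaff}}_{i_1,i_2}(j_1,j_2;q^d,n;q)
=
q^{di_2}q^{i_2(n-i_2)}
K_{i_1}^{\mathrm{aff}}(j_2;q^{d-i_2},n-i_2;q)\,
K_{i_2}^{\mathrm{aff}}(j_1;q^{d-j_2},n-j_2;q),
\]
and show that one of the two affine factors vanishes by Lemma~\ref{lem:aff-boundary-vanishing}, or by the convention $K_i^{\mathrm{aff}}=0$ for $i>n$. Since $(j_1,j_2)\in I_d$, we have $j_1+j_2\le d\le n$. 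We may assume $i_2+j_2\le n$, since otherwise the whole expression is zero by the convention following Definition~\ref{def:biaff-qK}. I will split into two cases according to whether $i_2>d-j_2$ or $i_2\le d-j_2$.

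\emph{Case $i_2>d-j_2$.} We apply Lemma~\ref{lem:aff-boundary-vanishing} to the second factor, with parameter exponent $d'=d-j_2\ge 0$ and size $n'=n-j_2$. The hypotheses $0\le j_1\le d'<i_2\le n'$ hold: $j_1\le d-j_2$ because $(j_1,j_2)\in I_d$, and $i_2\le n-j_2$ by our reduction. Hence the second factor vanishes.

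\emph{Case $i_2\le d-j_2$.} We apply Lemma~\ref{lem:aff-boundary-vanishing} to the first factor, with $d''=d-i_2\ge j_2\ge 0$ and $n''=n-i_2$. We need $0\le j_2\le d''<i_1\le n''$. Here $j_2\le d-i_2$ is exactly the case assumption. The inequality $d-i_2<i_1$ is the hypothesis $i_1+i_2>d$. Finally $i_1\le n-i_2$ holds because $(i_1,i_2)\in I_n$. Hence the first factor vanishes.

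The only delicate point is that the regularized formula \eqref{eq:regularized-formula} is valid as written at these parameter values, which are powers of $q$ and possibly equal to $1$. This is guaranteed because Lemma~\ref{lem:aff-boundary-vanishing} is stated for the regularized polynomials at arbitrary $a=q^{d}$ with $d\ge0$, and both $d-j_2$ and $d-i_2$ are nonnegative in the respective cases. Otherwise the argument is just checking inequalities, so no real obstacle is expected.
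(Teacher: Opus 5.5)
Your proposal is correct and matches the paper's proof: the paper makes the same reduction to $i_2+j_2\le n$ and the same case split at $i_2+j_2\le d$ versus $i_2+j_2>d$. In each case it applies Lemma~\ref{lem:aff-boundary-vanishing} to the same factor with the same inequalities you check; only the order of the two cases differs.
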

\begin{proof}
We may assume that $i_2+j_2\le n$.
Indeed, if $i_2+j_2>n$, then the bivariate polynomial is zero by the convention following Definition~\ref{def:biaff-qK}, since the second affine factor has degree index $i_2$ larger than its size $n-j_2$.

By Definition~\ref{def:biaff-qK}, apart from the scalar prefactor, it is the product of the two one-variable affine $q$-Krawtchouk polynomials
\[
K^{\mathrm{aff}}_{i_1}(j_2;q^{d-i_2},n-i_2;q), \qquad 
K^{\mathrm{aff}}_{i_2}(j_1;q^{d-j_2},n-j_2;q). 
\]

Assume first that $i_2+j_2\le d$.
Then $j_2\le d-i_2$.
Since $i_1+i_2>d$ and $i_1+i_2 \le n$, we also have $d-i_2<i_1 \le n - i_2$.
Thus $0 \le j_2\le d-i_2<i_1\le n - i_2$.
By Lemma~\ref{lem:aff-boundary-vanishing}, applied with
\[
i \mapsto i_1, \qquad j \mapsto j_2, \qquad d \mapsto d-i_2, \qquad n \mapsto n - i_2,
\]
we obtain 
\[
K^{\mathrm{aff}}_{i_1}(j_2;q^{d-i_2},n-i_2;q)=0.
\]

It remains to consider the case $d < i_2+j_2 \le n$.
Then $d-j_2 < i_2 \le n - j_2$.
Since $j_1 + j_2 \le d$, we also have $j_1 \le d - j_2$.
Thus $0 \le j_1 \le d - j_2 < i_2 \le n - j_2$.
By Lemma~\ref{lem:aff-boundary-vanishing}, applied with
\[
i \mapsto i_2, \qquad j \mapsto j_1, \qquad d \mapsto d-j_2, \qquad n \mapsto n - j_2,
\]
we obtain
\[
K^{\mathrm{aff}}_{i_2}(j_1;q^{d-j_2},n-j_2;q)=0.
\]
Therefore, in all cases, one of the two factors vanishes.
Consequently, $K^{\mathrm{biaff}}_{i_1,i_2}(j_1,j_2;q^d,n;q)=0$.
\end{proof}

\section{The association scheme on matrix spaces}

Let $R=\operatorname{GR}(p^2,r)$ be the Galois ring of characteristic
$p^2$, and put $q=p^r$.
Fix $d,n \in \mathbb{Z}$ with $1 \leq d \leq n$ and let
\[
X=\operatorname{Mat}_{d\times n}(R)
\]
be the additive group of $d\times n$ matrices over $R$.

Every matrix $x\in X$ admits a Smith normal form over $R$; see, for example, \cite[Chapter~15]{B}.
We say
that $x$ has type $(i,j)$ if its Smith normal form has $i$
invariant factors equal to $p$, $j$ invariant factors equal to
$1$, and all remaining invariant factors equal to $0$. Let
\[
I_d
=
\{(i,j)\in\mathbb{Z}_{\geq0}^2\mid i+j\leq d\}.
\]
For each $(i,j)\in I_d$, define a relation $\mathcal{R}_{i,j}$ on
$X$ by
\[
(x,y)\in \mathcal{R}_{i,j}
\quad\Longleftrightarrow\quad
y-x \text{ has type }(i,j).
\]

\begin{proposition}
The pair
\[
\left(X,\{\mathcal{R}_{i,j}\}_{(i,j)\in I_d}\right)
\]
is a symmetric association scheme.
\end{proposition}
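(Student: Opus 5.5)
The plan is to realize the relations as the orbital scheme of a group action, so that the association scheme axioms follow from the standard fact that the orbitals of a transitive group action form an association scheme.

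Let $G=\operatorname{GL}_d(R)\times\operatorname{GL}_n(R)$. Let $G\ltimes X$ act on $X$ by
\[
x\mapsto PxQ^{-1}+z, \qquad (P,Q)\in G,\ z\in X.
\]
This group acts transitively on $X$, because the translations alone are transitive. Two pairs $(x,y)$ and $(x',y')$ lie in the same orbit of the induced action on $X\times X$ exactly when $y'-x'=P(y-x)Q^{-1}$ for some $(P,Q)\in G$. So the key step is the following claim: the $G$-orbits on $X$ under $x\mapsto PxQ^{-1}$ are precisely the Smith type classes, indexed by $I_d$.

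To prove the claim, I would use the existence of the Smith normal form over $R$ (\cite[Chapter~15]{B}). Since $R$ is a local principal ideal ring whose ideals are $R\supset pR\supset 0$, each $x$ is equivalent to $\operatorname{diag}(I_j,pI_i,0)$, padded to size $d\times n$, with $i+j\le d$. Hence every orbit contains a matrix of some type $(i,j)\in I_d$.

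Conversely, matrices of different types lie in different orbits, and here I would use invariants. The first invariant is the rank of $x$ reduced modulo $p$ over $\mathbb F_q$, which equals $j$. The second is the isomorphism class of the $R$-module $R^n/R^d x$, the cokernel of $x$ acting on row vectors. This module is $(R/pR)^{i}\oplus R^{\,n-i-j}$, so its length is $2n-2j-i$, which determines $i$. Both invariants are preserved by left and right multiplication by invertible matrices. Finally, every type in $I_d$ occurs, so the $\mathcal R_{i,j}$ are nonempty and partition $X\times X$; moreover $\mathcal R_{0,0}$ is the diagonal.

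Given the claim, the relations $\mathcal R_{i,j}$ are exactly the orbitals of a transitive group, and the usual argument gives the axioms. For the intersection numbers, the count $\#\{z : (x,z)\in\mathcal R_{a},\ (z,y)\in\mathcal R_{b}\}$ is invariant under the group. Since $(x,y)\in\mathcal R_c$ determines the orbit of $(x,y)$, this count depends only on $c$. More concretely, after translating we may take $x=0$, and then the count becomes the number of $w$ of type $a$ with $y-w$ of type $b$; replacing $w$ by $PwQ^{-1}$ shows this depends only on the type of $y$.

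Symmetry follows because $y-x$ and $x-y=(-I_d)(y-x)$ have the same type. The only step requiring care is the orbit classification, specifically the invariance argument showing that types are well defined. Using the cokernel module structure together with the rank modulo $p$ settles this cleanly, so I do not expect a serious obstacle.
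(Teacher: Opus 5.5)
Your proposal is correct and takes essentially the same route as the paper: you realize the $\mathcal{R}_{i,j}$ as the orbitals of the transitive action of $X\rtimes(\operatorname{GL}_d(R)\times\operatorname{GL}_n(R))$, identify the $\operatorname{GL}_d(R)\times\operatorname{GL}_n(R)$-orbits with the Smith type classes, and get symmetry from $-1$ being a unit. Your explicit invariants, the rank modulo $p$ and the length $2n-i-2j$ of the cokernel $R^n/R^d x$, add a justification that the Smith type is well defined, which the paper simply asserts.
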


\begin{proof}
Let
\[
G=\operatorname{GL}_d(R)\times\operatorname{GL}_n(R)
\]
act on $X$ by
\[
(g_1,g_2)\cdot x=g_1 x g_2^{-1}.
\]
The $G$-orbits on $X$ are precisely the Smith type classes
\[
X_{i,j}=\{x\in X\mid x\text{ has type }(i,j)\}.
\]
Indeed, the Smith type is invariant under invertible row and column
operations, and two matrices over $R$ have the same Smith type if and
only if they are equivalent under such operations.

Now let $X\rtimes G$ act on $X$, where $X$ acts by translations
and $G$ acts as above. This action is transitive, and its orbitals are
\[
\{(x,y)\in X\times X\mid y-x\in X_{i,j}\},
\qquad (i,j)\in I_d.
\]
By the definition of $\mathcal{R}_{i,j}$, these orbitals are precisely
the relations $\mathcal{R}_{i,j}$. 
By the standard orbital
construction of association schemes from transitive group actions,
these relations form a Schurian association scheme; see, for example,
\cite[Example~2.3]{BBIT}.

It remains to check symmetry.
Since $-1$ is a unit in $R$, matrices $z$ and $-z$ have the same Smith type.
Therefore
\[
(x,y)\in \mathcal{R}_{i,j}
\quad\Longleftrightarrow\quad
y-x\in X_{i,j}
\quad\Longleftrightarrow\quad
x-y\in X_{i,j}
\quad\Longleftrightarrow\quad
(y,x)\in \mathcal{R}_{i,j}.
\]
Thus every relation is symmetric.
\end{proof}

Throughout the remainder of the paper, affine and bivariate affine
$q$-Krawtchouk polynomials are understood in the regularized sense
introduced in Sections~2 and~3. We now specialize the polynomial
parameter to
\[
a=q^d.
\]
By the regularization introduced in Section~2, this specialization is well defined.
When $d<n$, it is excluded from the generic hypergeometric definition.

\begin{theorem}
\label{main}
With the notation above, let $P$ be the first eigenmatrix of the
symmetric association scheme
\[
\mathfrak X=\left(X,\{\mathcal{R}_{i,j}\}_{(i,j)\in I_d}\right).
\]
We index the rows of $P$ by the character types $(s,t)\in I_d$ described in Section~6 and the columns by the relation types $(i,j)\in I_d$.
Then
\[
P_{(s,t),(i,j)} = K_{i,j}^{\mathrm{biaff}}(s,t;q^d,n;q), \qquad
(i,j),(s,t)\in I_d.
\]
Here $K^{\mathrm{biaff}}$ denotes the regularized bivariate affine
$q$-Krawtchouk polynomial defined in Definition~\ref{def:biaff-qK}.
\end{theorem}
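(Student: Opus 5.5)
We follow the strategy announced in the introduction: write each eigenvalue as a character sum and show that these sums satisfy the same recurrences in $(d,n)$ as $K^{\mathrm{biaff}}$.

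\textbf{Characters and the eigenvalue sums.} Fix a nontrivial additive character $\psi$ of $R$ that is primitive, i.e.\ nontrivial on $pR$. The characters of $X$ are then
\[
\chi_y(x)=\psi(\operatorname{tr}(y^{\top}x)), \qquad y\in\operatorname{Mat}_{d\times n}(R).
\]
Since $(g_1,g_2)$ transforms $\chi_y$ into $\chi_{g_1^{-\top}yg_2^{\top}}$, the sum $\sum_{x\in X_{i,j}}\chi_y(x)$ depends only on the Smith type $(s,t)$ of $y$. This is how the rows of $P$ are indexed by character types $(s,t)\in I_d$. We set
\[
P_{(s,t),(i,j)}(d,n)=\sum_{x\in X_{i,j}}\chi_y(x).
\]

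\textbf{Reduction to recurrences.} We argue by induction on $d$ and $n$. The aim is to show that $P_{(s,t),(i,j)}(d,n)$ satisfies the two recurrences of Lemmas~\ref{lem:biaff_RR2} and~\ref{lem:biaff_RR1}. In those recurrences the specialization $a=q^d$, $n$ becomes $aq^{-1}=q^{d-1}$, $n-1$, which is exactly the passage from $d\times n$ to $(d-1)\times(n-1)$ matrices. So the plan is:
\begin{enumerate}
\item Choose a representative $y$ of type $(s,t)$ in Smith normal form with its last row and last column involved. If $t\ge1$, take the last diagonal entry equal to $1$. If $t=0$ and $s\ge1$, take it equal to $p$.
\item Write $x=\begin{pmatrix}x'&u\\ v&w\end{pmatrix}$ with $x'\in\operatorname{Mat}_{(d-1)\times(n-1)}(R)$. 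The character then factorizes as
\[
\chi_y(x)=\chi_{y'}(x')\,\psi(\epsilon w),
\]
where $y'$ has type $(s,t-1)$ or $(s-1,t)$ and $\epsilon\in\{1,p\}$.
\item Group the matrices $x$ by the type $(i',j')$ of $x'$ and by $w$. This gives
\[
P_{(s,t),(i,j)}(d,n)=\sum_{(i',j')}\Bigl(\sum_{w\in R}\psi(\epsilon w)\,N^{w}_{(i',j')\to(i,j)}\Bigr)P_{(s',t'),(i',j')}(d-1,n-1),
\]
where $N^{w}_{(i',j')\to(i,j)}$ is the number of extensions $(u,v)$ of a fixed $x'$ of type $(i',j')$ producing type $(i,j)$ for the given corner $w$.
\end{enumerate}
This number depends only on $(i',j')$, by invariance under the stabilizer of the block structure; the transition numbers of Section~5 supply exactly these counts.

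\textbf{Matching the coefficients.} Once the transition numbers are computed, the orthogonality of $\psi$ over $R$, and over $pR\cong\mathbb F_q$ when $\epsilon=p$, collapses the inner sum over $w$. We then check that the resulting coefficients are exactly those in Lemma~\ref{lem:biaff_RR2} (the case $t\ge1$, $\epsilon=1$) and Lemma~\ref{lem:biaff_RR1} (the case $t=0$, $s\ge1$, $\epsilon=p$) at $a=q^d$. For instance, the coefficient $-aq^{n+i_2-2}$ should come from the terms in which the type $p$ count drops.

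\textbf{Base cases and boundary terms.} The base case is the row $(s,t)=(0,0)$, the trivial character. Here $P_{(0,0),(i,j)}=|X_{i,j}|$, which we must show equals $W_{i,j}(q^d;n,q)=K^{\mathrm{biaff}}_{i,j}(0,0;q^d,n;q)$ by Lemma~\ref{lem:biaff-initial}. This is an orbit-size count of Guo--Li type: first choose the reduction mod $p$ of rank $j$, then the $p$-part. The recurrences also produce indices $(i,j)$ with $i+j>d-1$, for which the matrix side is empty. The corresponding polynomial values vanish by Lemma~\ref{lem:biaff-boundary-vanishing}, so both sides still match.

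\textbf{Main obstacle.} The hardest step is computing the transition numbers: how the Smith type of $\begin{pmatrix}x'&u\\ v&w\end{pmatrix}$ depends on $(u,v,w)$ over $R=\mathrm{GR}(p^2,r)$. To do this, reduce $x'$ to Smith normal form and split $u$ and $v$ into components along the unit block, the $p$ block, and the zero block. The unit block can be cleared by row and column operations, which adjusts $w$ by a bilinear term. The remaining analysis reduces $u$ and $v$ modulo $p$ and lifts, and it needs finite-field counts of pairs of vectors with prescribed bilinear pairing; these are the appendix lemmas. We expect the bookkeeping of the resulting many cases, and then matching the summed coefficients against the recurrence coefficients, to be the bulk of the work.
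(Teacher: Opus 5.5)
Your proposal is correct and follows essentially the paper's route. The paper likewise writes the eigenvalues as character sums $\Delta_{i,j}(s,t;d,n)$, uses the one-row-one-column transition numbers $N_{i,j}(\rho;k,l)$ and collapses the corner sum by orthogonality of $\psi$, matches the result against Lemmas~\ref{lem:biaff_RR1} and~\ref{lem:biaff_RR2} at $a=q^d$, gets the base row from $|X_{i,j}|$, and kills boundary terms with Lemma~\ref{lem:biaff-boundary-vanishing}. Using the last rather than the first row and column, and the two-term recurrence whenever $t\ge1$ (the paper uses the $s$-recurrence whenever $s\ge1$), are harmless variations, since both recurrences hold in full generality.

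Two small corrections. The term $-aq^{n+i_2-2}$ arises from the subtype $(i,j-1)$, where the unit count of the submatrix is one smaller, not the $p$-count. Also, the case $d=1$ (with $d-1=0$) must be checked directly, as the paper does, because the transition-number analysis presumes $d\ge2$.
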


\begin{example}
\label{ex:mat22-z4}
Let $R=\mathbb{Z}_4=\operatorname{GR}(2^2,1)$.
Then $p=2$, $q=2$, and we take $d=n=2$.
We use the convention that type $(i,j)$ means that
the Smith normal form has $i$ invariant factors equal to $p$,
$j$ invariant factors equal to $1$,
and all remaining invariant factors equal to $0$.

In this case
\[
I_2=\{(0,0),(1,0),(0,1),(2,0),(1,1),(0,2)\}.
\]
Theorem~\ref{main} gives
\[
P_{(s,t),(i,j)} = K^{\mathrm{biaff}}_{i,j}(s,t;4,2;2).
\]
We index the rows by the character types $(s,t)$ and the columns by the relation types $(i,j)$,
using the order
\[
(0,0),(1,0),(0,1),(2,0),(1,1),(0,2)
\]
for both.
Direct substitution into the formula for $K^{\mathrm{biaff}}$ gives
\[
P =
\begin{pmatrix}
1 & 9 & 72 & 6 & 72 & 96\\
1 & 9 & 8 & 6 & 8 & -32\\
1 & 1 & 8 & -2 & -8 & 0\\
1 & 9 & -24 & 6 & -24 & 32\\
1 & 1 & -8 & -2 & 8 & 0\\
1 & -3 & 0 & 2 & 0 & 0
\end{pmatrix}.
\]

With this ordering, the first row, corresponding to the trivial character type $(0,0)$, records the cardinalities of the type classes:
\[
1,\ 9,\ 72,\ 6,\ 72,\ 96.
\]
For example, the $((0,0),(0,1))$-entry $72$ means
that there are $72$ matrices of type $(0,1)$.
These cardinalities sum to
\[
1+9+72+6+72+96=256=
\left| \operatorname{Mat}_{2\times 2}(\mathbb{Z}_4)\right|.
\]

Although the general expression for the eigenvalues as character sums
will be discussed in Section~6, we verify one nontrivial entry directly in this example.
Take
\[
z=
\begin{pmatrix}
2&0\\
0&0
\end{pmatrix},
\]
of type $(1,0)$, and consider the corresponding additive character
\[ 
x\longmapsto\psi(\operatorname{tr}(zx^t)),
\]
where $\psi$ is the standard additive character
\[
\psi:\mathbb{Z}_4\longrightarrow \mathbb C^\times,
\qquad
\psi(r)=\exp(2\pi \sqrt{-1}\,r/4),
\]
and $\operatorname{tr}$ denotes the usual matrix trace on $\operatorname{Mat}_{2\times2}(\mathbb{Z}_4)$.
For
$x=(x_{ab})\in X$, we have
\[
\psi(\operatorname{tr}(zx^t))
=
\psi(2x_{11})
=
(-1)^{x_{11}}.
\]
Among the $72$ matrices of type $(0,1)$, there are $40$ with
$x_{11}$ even and $32$ with $x_{11}$ odd. Hence
\[
\sum_{x\in X_{0,1}}\psi(\operatorname{tr}(zx^t))
=
40-32
=
8.
\]
This agrees with the $((1,0),(0,1))$-entry of $P$.
\end{example}

\section{Counting matrices}

Throughout this section, we keep the notation of Section~4. Thus
\[
R=\operatorname{GR}(p^2,r),\qquad q=p^r,\qquad
I_d=\{(i,j)\in\mathbb Z_{\geq0}^2\mid i+j\leq d\}, \qquad
X=\operatorname{Mat}_{d\times n}(R),
\]
with $d\leq n$. We also write
\[
\mathbb{F}_q=R/pR
\]
for the residue field.
For $(i,j)\in I_d$, set
\[
X_{i,j}
=
\{x\in X\mid x \text{ has type }(i,j)\}.
\]
Here, as in Section~4, type $(i,j)$ means that the Smith normal form
has $i$ invariant factors equal to $p$, $j$ invariant factors
equal to $1$, and all remaining invariant factors equal to $0$.

Fix a set of representatives in $R$ for the residue classes modulo $pR$,
chosen so that the representatives of $0$ and $1$ are $0$ and $1$, respectively.
We use this choice throughout the section.
For $a\in\mathbb F_q$, denote its representative by $\widetilde a$.
Thus tildes will indicate the fixed representatives in $R$, while we continue to write $0$ and $1$ for their representatives.
Using these representatives, every element $\rho \in R$ can be written uniquely
in the form
\[
\rho =\widetilde{\alpha}+p\widetilde{\beta},
\qquad \alpha,\beta\in\mathbb{F}_q.
\]
Indeed, $\alpha$ is the image of $\rho$ in $R/pR$, so that $\rho-\widetilde{\alpha}\in pR$.
Every element of $pR$ can be written uniquely as $p\widetilde{\beta}$ with $\beta\in\mathbb F_q$.

\subsection{Type classes}

\begin{proposition}
\label{prop:type-class-cardinality}
For $(i,j)\in I_d$, one has
\[
|X_{i,j}|
=
\begin{bmatrix} n\\ j \end{bmatrix}_q
\begin{bmatrix} n-j\\ i \end{bmatrix}_q
q^{j(n+d-i-j)}
\prod_{\nu=0}^{i+j-1}(q^d-q^\nu).
\]
\end{proposition}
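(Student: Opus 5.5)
Our plan is to count $X_{i,j}$ as an orbit of $G=\operatorname{GL}_d(R)\times\operatorname{GL}_n(R)$. Instead of computing a stabilizer, we count directly via reduction modulo $p$. We write each $x\in X$ uniquely as $x=\widetilde{x_0}+p\,\widetilde{x_1}$ with $x_0,x_1\in\operatorname{Mat}_{d\times n}(\mathbb F_q)$, applying the fixed representatives entrywise. The image $\bar x=x_0$ of $x$ in $\operatorname{Mat}_{d\times n}(\mathbb F_q)$ has rank equal to the number $j$ of unit invariant factors. So we first choose $x_0$ of rank $j$, and then count the lifts $x_1$ for which the total Smith type is $(i,j)$.

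\emph{Step 1.} The number of $d\times n$ matrices of rank $j$ over $\mathbb F_q$ is
\[
\begin{bmatrix} n\\ j \end{bmatrix}_q\prod_{\nu=0}^{j-1}(q^d-q^\nu).
\]
This is the standard count; the finite-field counting lemmas in the appendix can be cited for it. Because the type is $G$-invariant, and the fibre over $x_0$ is transported bijectively by $G$ (units act on $pR$ through their residues), we may assume $x_0=\begin{pmatrix} I_j&0\\0&0\end{pmatrix}$.

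\emph{Step 2.} We fix this $x_0$ and count $x_1$. Using invertible row and column operations over $R$ built from the unit block $I_j+p(\ast)$, we clear the off-diagonal blocks of $x$. The result is that $x$ is equivalent to $I_j\oplus p\,y$, where $y\in\operatorname{Mat}_{(d-j)\times(n-j)}(\mathbb F_q)$ is the lower-right block of $x_1$. The reason is that $p\cdot p=0$ kills every correction term: the operations that clear the $(1,2)$ and $(2,1)$ blocks only change the $(2,2)$ block by multiples of $p^2=0$. Hence $x$ has type $(i,j)$ exactly when $\operatorname{rank} y=i$. The other blocks of $x_1$, namely the $j\times n$ rows and the $(d-j)\times j$ block, are free. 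This gives $q^{jn+(d-j)j}=q^{j(n+d-j)}$ choices for them. The number of valid $y$ is
\[
\begin{bmatrix} n-j\\ i \end{bmatrix}_q\prod_{\nu=0}^{i-1}(q^{d-j}-q^\nu).
\]

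\emph{Step 3.} Multiplying the three counts gives
\[
|X_{i,j}|=\begin{bmatrix} n\\ j \end{bmatrix}_q\begin{bmatrix} n-j\\ i \end{bmatrix}_q q^{j(n+d-j)}\prod_{\nu=0}^{j-1}(q^d-q^\nu)\prod_{\nu=0}^{i-1}(q^{d-j}-q^\nu).
\]
We then use the identity $q^{j}\cdot(q^{d-j}-q^{\nu})=q^d-q^{\nu+j}$. Applying it to each of the $i$ factors in the second product absorbs $q^{ij}$ and turns that product into $\prod_{\nu=j}^{i+j-1}(q^d-q^\nu)$. Correspondingly, the power of $q$ becomes $q^{j(n+d-j)-ij}=q^{j(n+d-i-j)}$. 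This is exactly the stated formula. As a check, it reproduces the first row $1,9,72,6,72,96$ of Example~\ref{ex:mat22-z4}.

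The main obstacle is Step 2. We must justify carefully that the block reduction over $R$ is legitimate, namely that the lifted pivot block is invertible in $R$ and that the clearing operations affect the lower block only through terms in $p^2R=0$. We must also check that the type of $I_j\oplus p\,y$ is determined by $\operatorname{rank}_{\mathbb F_q} y$. For the latter, we reduce $y$ to its Smith form over $\mathbb F_q$ and lift the transforming matrices to $\operatorname{GL}(R)$; multiplication by $p$ depends only on residues, so the lifted matrices act on $p\,y$ exactly as the original ones act on $y$.
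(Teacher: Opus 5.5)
Your proposal is correct and follows essentially the same route as the paper. Both arguments reduce modulo $p$, normalize the residue to $\operatorname{diag}(I_j,0)$ by lifted $\mathbb F_q$-equivalences, clear the off-diagonal blocks using $p^2=0$ to reach $I_j\oplus p\,y$, and then count by $\operatorname{rank} y$. The only cosmetic difference is that you transport the whole fibre over $x_0$ onto the fibre over the normal form, while the paper keeps a general $\alpha$ and tracks $\gamma=\beta'+g_1\beta g_2$ explicitly.

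One small correction: the count of rank-$j$ matrices over $\mathbb F_q$ is not among the appendix lemmas, which only treat inner-product counts. Cite it as a standard fact, as the paper does.
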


\begin{proof}
For $\alpha,\beta\in \operatorname{Mat}_{d\times n}(\mathbb F_q)$,
let $\widetilde\alpha,\widetilde\beta$ denote their entrywise lifts to $R$ with respect to the fixed representatives.
Then every $x\in X$ can be written uniquely in the form
\[
x=\widetilde{\alpha}+p\widetilde{\beta},
\qquad
\alpha,\beta\in \operatorname{Mat}_{d\times n}(\mathbb{F}_q).
\]
Here $\alpha$ is the reduction of $x$ modulo $p$.

We first determine the type of $x$. Suppose that $\operatorname{rank}_{\mathbb{F}_q}(\alpha)=j$.
Choose
$g_1\in \operatorname{GL}_d(\mathbb{F}_q)$ and $g_2\in \operatorname{GL}_n(\mathbb{F}_q)$
such that
\[
g_1\alpha g_2
=
\alpha_0
:=
\begin{pmatrix}
I_j&0\\
0&0
\end{pmatrix}.
\]
Equivalently, this amounts to choosing a complement
$U$ of $\ker(\alpha)$ in $\mathbb F_q^n$ and a complement
$C$ of $\operatorname{im}(\alpha)$ in $\mathbb F_q^d$, together with bases
adapted to these decompositions such that
$\alpha|_U:U\to \operatorname{im}(\alpha)$ is represented by the identity matrix.

Lift $g_1$ and $g_2$ to invertible matrices
$\widetilde g_1\in \operatorname{GL}_d(R)$ and $\widetilde g_2\in \operatorname{GL}_n(R)$.
Since
\[
\widetilde g_1\widetilde{\alpha}\widetilde g_2
\equiv
g_1\alpha g_2
=
\alpha_0
\pmod p,
\]
there exists a matrix
$\beta'\in \operatorname{Mat}_{d\times n}(\mathbb{F}_q)$
such that
\[
\widetilde g_1\widetilde{\alpha}\widetilde g_2
=
\alpha_0+p\widetilde{\beta'}.
\]
Moreover, since $p^2=0$, we have
\[
p\widetilde g_1\widetilde{\beta}\widetilde g_2
=
p\,\widetilde{g_1\beta g_2}.
\]
Also, the matrix
\[
\widetilde{\beta'} +\widetilde{g_1\beta g_2} -\widetilde{\beta'+g_1\beta g_2}
\]
has entries in $pR$, since its reduction modulo $p$ is zero.
Hence
\[
p\widetilde{\beta'}+ p\widetilde{g_1\beta g_2}=p\widetilde{(\beta'+g_1\beta g_2)}.
\]
Therefore
\[
\widetilde g_1x\widetilde g_2
=
\widetilde g_1(\widetilde{\alpha}+p\widetilde{\beta})\widetilde g_2
=
\alpha_0
+
p\widetilde{\gamma},
\]
where
$\gamma
=
\beta'+g_1\beta g_2
\in \operatorname{Mat}_{d\times n}(\mathbb{F}_q)$.

Write $\gamma$ in block form compatible with
$\alpha_0=\operatorname{diag}(I_j,0)$,
namely
\[
\gamma=
\begin{pmatrix}
\gamma_{11}&\gamma_{12}\\
\gamma_{21}&\gamma_{22}
\end{pmatrix},
\]
where
$\gamma_{22}\in
\operatorname{Mat}_{(d-j)\times(n-j)}(\mathbb{F}_q)$.
Equivalently, the block $\gamma_{22}$ represents the induced map
\[
\ker(\alpha)\longrightarrow
\mathbb F_q^d/\operatorname{im}(\alpha)
\]
determined by the $p$-part after reducing modulo $p$, with respect
to the chosen complements. 
Then
\[
\widetilde g_1x\widetilde g_2
=
\begin{pmatrix}
I_j+p\widetilde{\gamma}_{11}
&
p\widetilde{\gamma}_{12}\\
p\widetilde{\gamma}_{21}
&
p\widetilde{\gamma}_{22}
\end{pmatrix}.
\]
Since $p^2=0$, the block
$I_j+p\widetilde{\gamma}_{11}$ is invertible over $R$, with inverse
$I_j-p\widetilde{\gamma}_{11}$. Hence elementary row and column
operations over $R$ reduce the above matrix to
\[
\begin{pmatrix}
I_j&0\\
0&p\widetilde{\gamma}_{22}
\end{pmatrix}.
\]
Indeed, since the upper-left block is invertible, standard block Gaussian
elimination over $R$ eliminates the upper-right and lower-left blocks.
The resulting lower-right block differs from
$p\widetilde{\gamma}_{22}$ by terms divisible by $p^2$, hence is equal to
$p\widetilde{\gamma}_{22}$.

Now let
$h=\operatorname{rank}_{\mathbb{F}_q}(\gamma_{22})$.
Then, over $\mathbb{F}_q$, the matrix $\gamma_{22}$ can be reduced by
invertible row and column operations to
\[
\begin{pmatrix}
I_h&0\\
0&0
\end{pmatrix}.
\]
Lifting these operations to $R$, the block $p\widetilde{\gamma}_{22}$
is equivalent over $R$ to
\[
\begin{pmatrix}
pI_h&0\\
0&0
\end{pmatrix}.
\]
Thus the Smith normal form of $x$ has $j$ invariant factors equal to
$1$, $h$ invariant factors equal to $p$, and all remaining
invariant factors equal to $0$. Therefore, for fixed $\alpha$ of
rank $j$, the matrix $x$ has type $(i,j)$ if and only if
$\operatorname{rank}_{\mathbb{F}_q}(\gamma_{22})=i$.

We now count such matrices. The number of choices of
$\alpha\in \operatorname{Mat}_{d\times n}(\mathbb{F}_q)$ with rank
$j$ is
\[
\begin{bmatrix} n\\ j \end{bmatrix}_q
\prod_{\nu=0}^{j-1}(q^d-q^\nu).
\]
Fix such an $\alpha$, and also fix $g_1,g_2$ as above. Then
$\beta'$ is fixed. As $\beta$ varies over
$\operatorname{Mat}_{d\times n}(\mathbb{F}_q)$, the matrix
\[
\gamma=\beta'+g_1\beta g_2
\]
also varies over all of
$\operatorname{Mat}_{d\times n}(\mathbb{F}_q)$. In particular,
$\gamma_{22}$ varies over all matrices in
$\operatorname{Mat}_{(d-j)\times(n-j)}(\mathbb{F}_q)$.
Moreover, for each fixed value of $\gamma_{22}$, the remaining blocks
$\gamma_{11},\gamma_{12},\gamma_{21}$
are arbitrary. Hence each value of $\gamma_{22}$ occurs
\[
q^{j^2+j(n-j)+(d-j)j}
=
q^{j(n+d-j)}
\]
times.
Therefore, for fixed $\alpha$, the number of $\beta$ such that
$\operatorname{rank}_{\mathbb{F}_q}(\gamma_{22})=i$ is
\[
q^{j(n+d-j)}
\begin{bmatrix} n-j\\ i \end{bmatrix}_q
\prod_{\nu=0}^{i-1}(q^{d-j}-q^\nu).
\]
Combining this with the number of possible $\alpha$, we get
\[
|X_{i,j}|
=
\begin{bmatrix} n\\ j \end{bmatrix}_q
\prod_{\nu=0}^{j-1}(q^d-q^\nu)
\cdot
q^{j(n+d-j)}
\cdot
\begin{bmatrix} n-j\\ i \end{bmatrix}_q
\prod_{\nu=0}^{i-1}(q^{d-j}-q^\nu).
\]
Using
\[
\prod_{\nu=0}^{i-1}(q^{d-j}-q^\nu)
=
q^{-ij}\prod_{\nu=j}^{i+j-1}(q^d-q^\nu),
\]
we obtain
\[
|X_{i,j}|
=
\begin{bmatrix} n\\ j \end{bmatrix}_q
\begin{bmatrix} n-j\\ i \end{bmatrix}_q
q^{j(n+d-i-j)}
\prod_{\nu=0}^{i+j-1}(q^d-q^\nu).
\]
\end{proof}

\subsection{Definition and reduction of $N_{i,j}(\rho;k,l)$}

For the rest of this section, we assume $d\geq 2$. We study how the type changes when one
row and one column are added.

\begin{lemma}
\label{lem:N-well-defined}
Fix $(i,j)\in I_d$, $\rho\in R$, and
$y\in\operatorname{Mat}_{(d-1)\times(n-1)}(R)$. Then the number of
pairs $(u,v)\in R^{n-1}\times R^{d-1}$ such that
\[
\begin{pmatrix}
\rho & u^t\\
v & y
\end{pmatrix}
\in X_{i,j}
\]
depends only on the type of $\rho$ and the type of $y$, and not on
the particular choices $\rho$ and $y$.
\end{lemma}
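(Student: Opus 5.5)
The plan is to use a group action that preserves the block shape and acts transitively on the pairs $(\rho,y)$ of given types, together with a bijection on the pairs $(u,v)$. The type of $\rho\in R$ is its Smith type as a $1\times 1$ matrix: it is either a unit, an element of $pR\setminus\{0\}$, or $0$.

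Consider the group
\[
H=R^\times\times R^\times\times \operatorname{GL}_{d-1}(R)\times\operatorname{GL}_{n-1}(R).
\]
An element $(\lambda,\mu,g_1,g_2)\in H$ acts on $(d\times n)$-matrices by
\[
\begin{pmatrix}\rho&u^t\\ v&y\end{pmatrix}\longmapsto
\begin{pmatrix}\lambda&0\\0&g_1\end{pmatrix}
\begin{pmatrix}\rho&u^t\\ v&y\end{pmatrix}
\begin{pmatrix}\mu&0\\0&g_2\end{pmatrix}^{-1}
=
\begin{pmatrix}\lambda\rho\mu^{-1}&\lambda u^tg_2^{-1}\\ g_1v\mu^{-1}&g_1yg_2^{-1}\end{pmatrix}.
\]
The block-diagonal matrices are invertible over $R$. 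The Smith type is invariant under $\operatorname{GL}_d(R)\times\operatorname{GL}_n(R)$, as in the proof of the preceding Proposition, so this action preserves each $X_{i,j}$.

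Now fix $(\rho,y)$ and $(\rho',y')$ with $\rho,\rho'$ of the same type and $y,y'$ of the same type.

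\emph{Corner entry.} Two elements of $R$ of the same type differ by a unit factor. For units this is clear. For $\rho=p\widetilde a$ and $\rho'=p\widetilde b$ with $a,b\neq0$ in $\mathbb F_q$, we have $\rho'=\widetilde{b a^{-1}}\rho$, using $p^2=0$ together with the fact that $p\widetilde{\alpha}$ depends only on $\alpha$ modulo $p$. For $\rho=0$ there is nothing to show. Hence we can choose $\lambda$ with $\lambda\rho=\rho'$ and set $\mu=1$.

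\emph{Lower-right block.} By the Smith normal form over $R$, matrices of the same type are equivalent under $\operatorname{GL}_{d-1}(R)\times\operatorname{GL}_{n-1}(R)$. This is the same orbit statement as in Section~4, applied to size $(d-1)\times(n-1)$. So we can choose $g_1,g_2$ with $g_1yg_2^{-1}=y'$.

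With these choices, the map
\[
(u,v)\mapsto \bigl((g_2^{-1})^{t}u\,\lambda,\ g_1v\bigr)
\]
is a bijection $R^{n-1}\times R^{d-1}\to R^{n-1}\times R^{d-1}$. It sends the completed matrix for $(\rho,y,u,v)$ to the completed matrix for $(\rho',y',\cdot,\cdot)$ and preserves membership in $X_{i,j}$. Therefore the two counts agree.

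I expect no serious obstacle. The only points to verify carefully are:
\begin{itemize}
\item the unit scaling between two elements of $pR\setminus\{0\}$, which relies on $p^2=0$;
\item the orbit description of the Smith type classes for $(d-1)\times(n-1)$ matrices. This is the same fact used in Section~4; it holds for any size, and degenerate sizes such as $d-1\ge1$ are covered by the assumption $d\ge2$.
\end{itemize}
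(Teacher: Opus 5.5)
Your proof is correct and follows the same approach as the paper. Both multiply on the left and right by the block-diagonal invertible matrices $\operatorname{diag}(\lambda,g_1)$ and $\operatorname{diag}(\mu,g_2)$, which preserves Smith type and induces a bijection on the pairs $(u,v)$. The paper handles the change of $y$ and the change of $\rho$ in two separate steps rather than with one combined group element, and it takes for granted the fact you verify explicitly, namely that two elements of $pR\setminus\{0\}$ differ by a unit factor because $p^2=0$.
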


\begin{proof}
Let $y$ and $y'$ be two matrices of the same type. Then there exist
$g_1 \in\operatorname{GL}_{d-1}(R)$ and $g_2 \in\operatorname{GL}_{n-1}(R)$
such that
$y'=g_1 y g_2$.
The change of variables
$u\mapsto g_2^t u$, $v\mapsto g_1v$
is a bijection on $R^{n-1}\times R^{d-1}$.
Moreover,
\[
\begin{pmatrix}
1 & 0\\
0 & g_1
\end{pmatrix}
\begin{pmatrix}
\rho & u^t\\
v & y
\end{pmatrix}
\begin{pmatrix}
1 & 0\\
0 & g_2
\end{pmatrix}
=
\begin{pmatrix}
\rho & (g_2^tu)^t\\
g_1v & g_1yg_2
\end{pmatrix}.
\]
Hence the Smith type is preserved.

If $\sigma=a\rho b$ with $a,b\in R^\times$, then multiplication on the
left and right by the block diagonal matrices gives
\[
\begin{pmatrix}
a&0\\
0&I_{d-1}
\end{pmatrix}
\begin{pmatrix}
\rho&u^t\\
v&y
\end{pmatrix}
\begin{pmatrix}
b&0\\
0&I_{n-1}
\end{pmatrix}
=
\begin{pmatrix}
\sigma&a u^t\\
v b&y
\end{pmatrix}.
\]
This gives a bijection on the choices of $(u,v)$ and preserves the
Smith type.

Therefore the number of such pairs $(u,v)$ depends only on the type of
$\rho$ and the type of $y$.
\end{proof}

By Lemma~\ref{lem:N-well-defined}, for $(k,l)\in I_{d-1}$, we may define
\[
N_{i,j}(\rho;k,l)
=
\#\left\{
(u,v)\in R^{n-1}\times R^{d-1}
\;\middle|\;
\begin{pmatrix}
\rho & u^t\\
v & y
\end{pmatrix}
\in X_{i,j}
\right\},
\]
where $y$ is any fixed matrix of type $(k,l)$.
Thus, in what follows, we regard $\rho$ as taking values in
$\{0,p,1\}$, representing the zero, nonzero nonunit, and unit cases,
respectively.
By the same lemma,
we may take $y$ to be the standard representative
\[
y=
\operatorname{diag}(
\underbrace{p,\ldots,p}_{k},
\underbrace{1,\ldots,1}_{l},
0,\ldots,0),
\]
where the remaining diagonal entries are zero and the matrix has size $(d-1)\times(n-1)$.

With respect to this block decomposition, write
\[
u=(u_1,u_2,u_3),
\qquad
v=(v_1,v_2,v_3),
\]
where
\[
u_1,v_1\in R^k,\qquad
u_2,v_2\in R^l, \qquad
u_3\in R^{n-k-l-1},
\qquad
v_3\in R^{d-k-l-1}.
\]
Here and below, if one of the dimensions $k$, $l$, $n-k-l-1$, or $d-k-l-1$ is zero,
the corresponding module is understood to be the zero module;
in particular, any case requiring a nonzero vector in that module is void.
Then we write uniquely
\[
u_1=\widetilde{\alpha}_1+p\widetilde{\beta}_1,\qquad
v_1=\widetilde{\xi}_1+p\widetilde{\eta}_1, \qquad
u_2=\widetilde{\alpha}_2+p\widetilde{\beta}_2,\qquad
v_2=\widetilde{\xi}_2+p\widetilde{\eta}_2,
\]
where
\[
\alpha_1,\beta_1,\xi_1,\eta_1\in\mathbb{F}_q^k,
\qquad
\alpha_2,\beta_2,\xi_2,\eta_2\in\mathbb{F}_q^l.
\]
The components $p\widetilde{\beta}_1$ and $p\widetilde{\eta}_1$ can be eliminated using the $pI_k$ block,
whereas $\widetilde{\alpha}_1$ and $\widetilde{\xi}_1$ remain.
Similarly, the entire $u_2$ and $v_2$ components can be eliminated using the invertible
block $I_l$. Carrying out these elementary row and column operations
over $R$, the block matrix is reduced to
\[
M =
\begin{pmatrix}
\rho' & \widetilde{\alpha}_1^t & 0 & u_3^t\\
\widetilde{\xi}_1 & pI_k & 0 & 0\\
0 & 0 & I_l & 0\\
v_3 & 0 & 0 & 0
\end{pmatrix},
\]
where
\[
\rho'
=
\rho
-
(
\widetilde{\beta}_1^t\widetilde{\xi}_1
+
\widetilde{\alpha}_1^t\widetilde{\eta}_1
+
\widetilde{\alpha}_2^t\widetilde{\xi}_2
)
-
p( 
\widetilde{\beta}_1^t\widetilde{\eta}_1
+
\widetilde{\beta}_2^t\widetilde{\xi}_2
+
\widetilde{\alpha}_2^t\widetilde{\eta}_2
).
\]

The invertible block $I_l$ can be separated and contributes $l$ invariant factors equal to $1$.
After separating this block, the Smith type of the remaining matrix is determined by $\rho'$, $\alpha_1$, $\xi_1$, $u_3$, and $v_3$, together with the block $pI_k$.
The next subsection classifies the possible Smith types of the reduced matrix in terms of these data.
The classification in the next subsection depends only on the reduced data,
including $\rho'$, and not directly on the original value of $\rho$.
The dependence on $\rho$ will enter later through the distribution of $\rho'$.

\subsection{Type criteria}
\label{subsec:type-criteria}

For the reduced matrix above, put
\[
N=n-k-l-1,\qquad D=d-k-l-1.
\]
Thus $u_3\in R^N$ and $v_3\in R^D$.
For each of $u_3$ and $v_3$, we distinguish three cases:
\[
u_3=0,\qquad u_3\in(pR)^N\setminus\{0\},\qquad u_3\notin(pR)^N,
\]
and similarly
\[
v_3=0,\qquad v_3\in(pR)^D\setminus\{0\},\qquad v_3\notin(pR)^D.
\]
Since transposition preserves Smith type, we use the transposed cases when appropriate.
Throughout this subsection, the conditions on $\alpha_1$ and $\xi_1$,
such as $\alpha_1=0$, $\xi_1\neq0$, and
$\alpha_1^t\xi_1=0$, are understood over $\mathbb F_q$.
When these vectors occur as entries of matrices over $R$, we use their
fixed lifts and write them with tildes.

\begin{lemma}
\label{lem:type-case-u3-v3-zero}
Assume that $u_3=0$ and $v_3=0$.
Then the reduced block matrix $M$ has the following types.
\begin{enumerate}
\item If $\alpha_1=0$ and $\xi_1=0$,
then
\[
\operatorname{type}(M)=
\begin{cases}
(k,l), & \rho'=0,\\
(k+1,l), & \rho'\in pR\setminus\{0\},\\
(k,l+1), & \rho'\notin pR.
\end{cases}
\]

\item If exactly one of $\alpha_1$ and $\xi_1$ is nonzero,
then
\[
\operatorname{type}(M)=
\begin{cases}
(k-1,l+1), & \rho'\in pR,\\
(k,l+1), & \rho'\notin pR.
\end{cases}
\]

\item If $\alpha_1\neq 0$ and $\xi_1\neq 0$,
then
\[
\operatorname{type}(M)=
\begin{cases}
(k-2,l+2), & \alpha_1^t\xi_1=0,\\
(k-1,l+2), & \alpha_1^t\xi_1\neq0.
\end{cases}
\]
\end{enumerate}
\end{lemma}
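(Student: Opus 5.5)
When $u_3=0$ and $v_3=0$, the last $N$ columns and $D$ rows of $M$ are zero, and the block $I_l$ is already separated off. So I would reduce to the $(k+1)\times(k+1)$ matrix
\[
M'=
\begin{pmatrix}
\rho' & \widetilde{\alpha}_1^t\\
\widetilde{\xi}_1 & pI_k
\end{pmatrix}.
\]
Then $\operatorname{type}(M)=\operatorname{type}(M')+(0,l)$, since the zero rows and columns only add invariant factors equal to $0$. The work is to compute the type of $M'$ in each of the three cases, using invertible row and column operations over $R$ and the relation $p^2=0$.

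Case 1: $\alpha_1=0$ and $\xi_1=0$. Then $M'=\operatorname{diag}(\rho',pI_k)$. Write $\rho'=\epsilon\cdot(\text{unit})$ with $\epsilon\in\{0,p,1\}$. The three listed types follow at once.

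Case 2: by transposition, which preserves Smith type, I may assume $\alpha_1\ne0$ and $\xi_1=0$. Using $\mathrm{GL}_k(\mathbb F_q)$, lifted to $\mathrm{GL}_k(R)$ and acting as $g$ on columns and $g^{-1}$ on rows of the $pI_k$ block, I move $\alpha_1$ to $e_1$. The block $pI_k$ stays $pI_k$ up to a term $p\cdot(\text{unit-congruent matrix})$, which is equivalent to $pI_k$. Now the first row is $(\rho',1,0,\dots,0)$ and the first column is $(\rho',0,\dots,0)^t$.
\begin{itemize}
\item If $\rho'$ is a unit, clearing the row gives $\operatorname{diag}(\rho',p,\dots)$ after eliminating $p$ entries, so the type is $(k,1)$.
\item If $\rho'\in pR$, the $1$ in position $(1,2)$ is a pivot. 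Column operations clear $\rho'$ from the first row. Row operations clear the $(2,2)$ entry $p$ using row $1$. This leaves a unit block together with $pI_{k-1}$, so the type is $(k-1,1)$.
\end{itemize}

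Case 3: $\alpha_1\ne0$ and $\xi_1\ne0$. The unit entries of the border form a rank-two structure, and the inner product decides how the two interact. First I normalize $\alpha_1=e_1$ as in Case 2, and clear $\rho'$ and the rest of the first row using the pivot $1$ at $(1,2)$. Subtracting multiples of row $1$ then kills the entry $p$ at $(2,2)$. The remaining lower block has first column $\widetilde\xi_1$ with row $2$ removed, hmm, more precisely its first column is $\widetilde\xi_1$ with the $e_1$-coordinate now relocated. The cleaner route is to count units directly. Reduction modulo $p$ of $M'$ is $\begin{pmatrix}\bar\rho' & \alpha_1^t\\ \xi_1 & 0\end{pmatrix}$, which has rank $2$ when both vectors are nonzero. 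So there are exactly two invariant factors equal to $1$.

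By Proposition \ref{prop:type-class-cardinality}'s argument, the number of $p$'s equals the rank over $\mathbb F_q$ of the $p$-part restricted to $\ker\to\operatorname{coker}$ of the reduction. The kernel is $\{(0,w): \alpha_1^tw=0\}$ and the cokernel is the quotient by $\operatorname{span}\{(\bar\rho',\xi_1),(1,0)\}$ in coordinates. The $p$-part on the kernel is $w\mapsto(0,w)$. Its induced rank is $k-1$ or $k-2$ according to whether $(0,\ker\alpha_1^t)$ contains $(0,\xi_1)$ modulo the image. That happens exactly when $\alpha_1^t\xi_1=0$. This gives types $(k-2,2)$ and $(k-1,2)$.

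The main obstacle is this kernel/cokernel bookkeeping in Case 3, including handling $\bar\rho'$. I will apply the same invariant description in Case 2 as a consistency check.
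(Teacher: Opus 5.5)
Your proof is correct. Cases~1 and~2 follow the paper's elimination argument; you transpose to $\alpha_1\neq0$, $\xi_1=0$ rather than $\alpha_1=0$, $\xi_1\neq0$, which makes no difference. The genuine difference is Case~3.

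The paper normalizes $\alpha_1=e_1$ and writes $\xi_1=(c,\eta)^t$ with $c=\alpha_1^t\xi_1$. It then splits off the unit pivot and reads the type off the residual block $\begin{pmatrix}\widetilde c-p\rho' & 0\\ \widetilde\eta & pI_{k-1}\end{pmatrix}$, according to whether $c\neq0$ or $c=0$.

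You instead use the invariants implicit in Proposition~\ref{prop:type-class-cardinality}. The number of unit factors is $\operatorname{rank}\overline{M'}=2$. The number of $p$-factors is the rank of the induced map $\ker\overline{M'}\to\operatorname{coker}\overline{M'}$. The kernel is $\{(0,w):\alpha_1^tw=0\}$ and the image is $\operatorname{span}\{(1,0),(0,\xi_1)\}$. So the map is $w\mapsto[(0,w)]$, whose kernel is $\ker\alpha_1^t\cap\mathbb F_q\xi_1$; this is nonzero exactly when $\alpha_1^t\xi_1=0$.

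Your route explains conceptually why $\rho'$ drops out and why the inner product decides the type. It would also handle the other type-criteria lemmas by a uniform recipe. The paper's pivoting is more hands-on but fully self-contained.

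You should state one point explicitly. The induced map is $w\mapsto[p^{-1}M'\hat w]$ for any lift $\hat w$ of $w$; this is well defined modulo the image. It is not the matrix of $p$-coefficients restricted to the kernel, because carries such as $p^{-1}\widetilde\alpha_1^t\widetilde w$ can change the rank in general. These carries are the $\beta'$ term in Proposition~\ref{prop:type-class-cardinality}.

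In your situation the first coordinate of $M'(0,\widetilde w)^t$ is $\widetilde\alpha_1^t\widetilde w\in pR$, and the remaining coordinates are $p\widetilde w$. The carry therefore sits in the first coordinate, which is killed because $(1,0)$ lies in the image. So your formula $w\mapsto[(0,w)]$ stands. You should also delete the abandoned elimination fragment at the start of Case~3.
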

\begin{proof}
Under the assumptions $u_3=0$ and $v_3=0$,
the last block row and the last block column of $M$ are zero.
In addition, the identity block $I_l$ contributes $l$ invariant factors equal to $1$.
Thus, it remains to determine the Smith type of
\[
M_0 =
\begin{pmatrix}
\rho' & \widetilde{\alpha}_1^t\\
\widetilde{\xi}_1 & pI_k
\end{pmatrix}.
\]

First suppose that $\alpha_1=0$ and $\xi_1=0$.
Then $M_0=\rho'\oplus pI_k$.
The block $pI_k$ contributes $k$ invariant factors equal to $p$.
The scalar block $\rho'$ contributes nothing if $\rho'=0$, one
invariant factor equal to $p$ if $\rho'\in pR\setminus\{0\}$, and
one invariant factor equal to $1$ if $\rho'\notin pR$. Hence $M$
has type
\[
\begin{cases}
(k,l), & \rho'=0,\\
(k+1,l), & \rho'\in pR\setminus\{0\},\\
(k,l+1), & \rho'\notin pR.
\end{cases}
\]

Next suppose that exactly one of $\alpha_1$ and $\xi_1$ is nonzero.
By transposition, it suffices to consider the case $\alpha_1=0$ and $\xi_1\neq 0$.
Using a unit entry of $\widetilde{\xi}_1$ as a pivot, we split off one invariant
factor equal to $1$. 
If $\rho'\in pR$, then $\rho'$ is eliminated by the
$\widetilde{\xi}_1$-pivot using a coefficient in $pR$, so no new nonzero entry
is created, since $p^2=0$. Thus one $p$-direction from $pI_k$ is
absorbed. Hence the type is $(k-1,l+1)$.
If $\rho'\notin pR$, then $\rho'$ itself is a unit pivot;
after splitting it off, the block $pI_k$ remains equivalent to
$pI_k$. 
Hence the type is $(k,l+1)$.
The case $\alpha_1\neq0,\xi_1=0$ follows by transposition.

It remains to consider the case $\alpha_1\neq0$ and $\xi_1\neq0$.
Using elementary row and column operations within the $pI_k$-block,
we may transform $M_0$ into the form
\[
M_0 \sim
\begin{pmatrix}
\rho' & 1 & 0\\
\widetilde{c} & p & 0\\
\widetilde{\eta}  & 0 & pI_{k-1}
\end{pmatrix},
\qquad
c=\alpha_1^t\xi_1\in\mathbb F_q,
\]
where $\eta\in\mathbb{F}_q^{k-1}$.
Here, after choosing coordinates so that $\alpha_1=(1,0,\ldots,0)$,
the vector $\xi_1$ has coordinates $(c,\eta)^t$.
Splitting off the unit pivot $1$, we obtain the remaining block
\[
M_1 =
\begin{pmatrix}
\widetilde{c}-p\rho' & 0\\
\widetilde{\eta}     & pI_{k-1}
\end{pmatrix}.
\]
If $c\neq0$, then $\widetilde c-p\rho'$ is a unit, so $M_1$
contributes one unit factor and $k-1$ $p$-factors. Hence the type is
$(k-1,l+2)$.
If $c=0$, then $\eta\neq0$ because $\xi_1\neq0$.
A unit entry of $\widetilde\eta$ gives another unit pivot.
Since $\widetilde c-p\rho'=-p\rho'\in pR$,
this entry can be eliminated using the unit pivot without creating further nonzero entries, because $p^2=0$.
The remaining $p$-part is $pI_{k-2}$, so $M$ has type $(k-2,l+2)$.
\end{proof}

\begin{lemma}
\label{lem:type-case-u-p-nonzero-v-zero}
Assume that $u_3\in (pR)^N\setminus\{0\}$ and $v_3=0$.
Then the reduced block matrix $M$ has the following types.

\begin{enumerate}
\item If $\alpha_1=0$ and $\xi_1=0$, then
\[
\operatorname{type}(M)=
\begin{cases}
(k+1,l), & \rho'\in pR,\\
(k,l+1), & \rho'\notin pR.
\end{cases}
\]

\item If $\alpha_1=0$ and $\xi_1\neq0$, then
\[
\operatorname{type}(M)=(k,l+1).
\]

\item If $\alpha_1\neq0$ and $\xi_1=0$, then
\[
\operatorname{type}(M)=
\begin{cases}
(k-1,l+1), & \rho'\in pR,\\
(k,l+1), & \rho'\notin pR.
\end{cases}
\]

\item If $\alpha_1\neq0$ and $\xi_1\neq0$, then
\[
\operatorname{type}(M)=
\begin{cases}
(k-2,l+2), & \alpha_1^t\xi_1=0,\\
(k-1,l+2), & \alpha_1^t\xi_1\neq0.
\end{cases}
\]
\end{enumerate}
\end{lemma}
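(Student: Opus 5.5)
The plan is to follow the template of Lemma~\ref{lem:type-case-u3-v3-zero}. First I would normalize $u_3$ and then reduce to a small block matrix. Since $v_3=0$, the last block row of $M$ is zero, and the identity block $I_l$ splits off to contribute $l$ unit invariant factors. Since $u_3\in(pR)^N\setminus\{0\}$, write $u_3=p\widetilde{w}$ with $0\neq w\in\mathbb F_q^N$. A column operation in $\operatorname{GL}_N(R)$ lifting an element of $\operatorname{GL}_N(\mathbb F_q)$ sending $w$ to the first basis vector brings $u_3^t$ to $(p,0,\ldots,0)$, since $p^2=0$. The remaining zero columns are discarded. It therefore suffices to determine the Smith type of
\[
M_0'=
\begin{pmatrix}
\rho' & \widetilde{\alpha}_1^t & p\\
\widetilde{\xi}_1 & pI_k & 0
\end{pmatrix},
\]
a $(k+1)\times(k+2)$ matrix.

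The four cases are then handled as follows.

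\emph{Case (1):} $\alpha_1=0$ and $\xi_1=0$. The first row is $(\rho',0,p)$ and it is direct-summed with $pI_k$. If $\rho'\notin pR$, the unit $\rho'$ is a pivot that clears the entry $p$, giving type $(k,l+1)$. If $\rho'\in pR$, the $p$ in the new column clears $\rho'$ by a column operation whose coefficient is a multiple of $p$ (so no new entries arise, as $p^2=0$). The row is then $(0,0,p)$, giving type $(k+1,l)$.

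\emph{Case (2):} $\alpha_1=0$ and $\xi_1\neq0$. A unit entry of $\widetilde{\xi}_1$ is a pivot in the first column. Clearing $\rho'$ from that column (or, if $\rho'$ is a unit, using $\rho'$ directly) leaves the first row equal to $(0,0,p)$, up to corrections in $pR$ multiplied by $p$, which vanish. What remains is $pI_k$ with one $p$-direction absorbed into the pivot row, bordered by the column carrying $p$. The absorbed $p$ is replaced by the $p$ from $u_3$, so the $p$-count stays $k$ and the type is $(k,l+1)$ regardless of $\rho'$. Concretely, I would choose coordinates with $\xi_1=e_1$ and verify this directly on a $2\times3$ block.

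\emph{Case (3):} $\alpha_1\neq0$ and $\xi_1=0$. Choose coordinates with $\alpha_1=e_1$; the unit in the first row is then a pivot. Clearing along the first row, the entry $p$ of $u_3$ and $\rho'$ are eliminated, up to a term $-p\rho'$ in the first column of the second row. The second row's entry $p$ in the pivot column is eliminated using the pivot. The result is the matrix $\bigl(-p\rho' \;\; pI_{k-1}\text{-part}\bigr)$. If $\rho'\notin pR$, the entry $-p\rho'$ is a nonzero $p$-multiple in a fresh column, so the $p$-count is restored to $k$, giving type $(k,l+1)$. If $\rho'\in pR$, then $-p\rho'=0$ and we get $(k-1,l+1)$. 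Notice that the column $u_3$ contributes nothing here, which matches the statement (the same answer as Lemma~\ref{lem:type-case-u3-v3-zero}(2)).

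\emph{Case (4):} both $\alpha_1$ and $\xi_1$ are nonzero. Proceed as in the last case of Lemma~\ref{lem:type-case-u3-v3-zero}: the unit in $\alpha_1$ is a pivot that also kills the $p$ from $u_3$, leaving $M_1$ exactly as before. That argument then yields $(k-1,l+2)$ or $(k-2,l+2)$ according to whether $c=\alpha_1^t\xi_1$ is nonzero or zero.

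The main obstacle is bookkeeping: tracking where the extra $p$ from $u_3$ lands after pivot elimination, especially in cases (2) and (3). There one must be careful that cross terms are multiplied by $p$ twice and vanish, and must determine the rank mod $p$ of the residual $p$-block correctly. I would check each case by reducing to the explicit small matrices above, with $k\le2$ representative coordinates, and computing the ranks of the reduction mod $p$ and of the $p$-part.
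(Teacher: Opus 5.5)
Your proposal is correct and follows essentially the same route as the paper: split off $I_l$, reduce to the $(k+1)$-row block, and do a pivot case analysis, with the $\alpha_1\neq0$ cases reduced (once the $\alpha_1$-pivot kills the $p$ from $u_3$, since the resulting cross terms are multiples of $p^2$) to the computation of Lemma~\ref{lem:type-case-u3-v3-zero}. Your up-front normalization $u_3^t\mapsto(p,0,\ldots,0)$ and your explicit tracking of the $-p\rho'$ entry in case (3) are just more concrete versions of the paper's steps. One small slip: in case (1) with $\rho'=p\widetilde a$, the clearing coefficient is $\widetilde a$, not a multiple of $p$, and no new entries arise because the $u_3$-column is zero outside the first row, not because $p^2=0$.
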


\begin{proof}
Since $v_3=0$, the last block row is zero and may be omitted.
The block $I_l$ contributes $l$ invariant factors equal to $1$.
Thus it remains to determine the Smith type of
\[
M_0 =
\begin{pmatrix}
\rho' & \widetilde{\alpha}_1^t & u_3^t\\
\widetilde{\xi}_1 & pI_k & 0
\end{pmatrix}.
\]
We distinguish the four cases according to whether $\alpha_1$ and
$\xi_1$ are zero.

First suppose that $\alpha_1=0$ and $\xi_1=0$.
If $\rho'\notin pR$, then $\rho'$ is a unit. Using $\rho'$ as a
pivot, we split off one invariant factor equal to $1$. Since
$u_3\in(pR)^N$, the entries of $u_3$ are eliminated by this unit
pivot, and the block $pI_k$ remains equivalent to $pI_k$. Hence the
type is $(k,l+1)$.

If $\rho'\in pR$, then all entries in $M_0$ are contained in
$pR$. Write
\[
\rho'=p\widetilde a,\qquad
u_3=p\widetilde w,
\]
with $a\in\mathbb F_q$ and $w\in\mathbb F_q^N\setminus\{0\}$.
Since every entry of $M_0$ lies in $pR$, take its coefficient of $p$ entrywise.
The resulting matrix over $\mathbb F_q$ contains the block
\[
\begin{pmatrix}
a & 0 & w^t\\
0 & I_k & 0
\end{pmatrix},
\]
which has rank $k+1$, since $w\neq0$. Thus the original
matrix contributes $k+1$ invariant factors equal to $p$, and the
type is $(k+1,l)$.

Next suppose that $\alpha_1=0$ and $\xi_1\neq0$.
A unit entry of $\widetilde{\xi}_1$ can be used as a pivot, and hence one
invariant factor equal to $1$ is split off. This pivot absorbs one
$p$-direction from the block $pI_k$. On the other hand, since
$u_3\in(pR)^N\setminus\{0\}$,
the remaining first row contains a nonzero $p$-entry, which
contributes one new invariant factor equal to $p$. Thus the loss of
one $p$-factor from $pI_k$ is compensated by the new $p$-factor
coming from $u_3$. Therefore the type is $(k,l+1)$.

Finally suppose that
$\alpha_1\neq0$.
A unit entry of $\widetilde{\alpha}_1$ can be used as a pivot. Since
$u_3\in(pR)^N$, the coefficients used to eliminate the entries of
$u_3$ lie in $pR$. The pivot column has a $p$-entry in the
$pI_k$-block below the first row, so the entries created below the
$u_3$-block are multiples of $p^2$, hence vanish. Thus $u_3$ is
eliminated without creating new entries.

Therefore the remaining computation is the same as in the case
$u_3=0$, $v_3=0$, and $\alpha_1\neq0$ from
Lemma~\ref{lem:type-case-u3-v3-zero}. If $\xi_1=0$, this gives
\[
\operatorname{type}(M)=
\begin{cases}
(k-1,l+1), & \rho'\in pR,\\
(k,l+1), & \rho'\notin pR.
\end{cases}
\]
If $\xi_1\neq0$, it gives
\[
\operatorname{type}(M)=
\begin{cases}
(k-2,l+2), & \alpha_1^t\xi_1=0,\\
(k-1,l+2), & \alpha_1^t\xi_1\neq0.
\end{cases}
\]
\end{proof}

\begin{lemma}
\label{lem:type-case-u-zero-v-p-nonzero}
Assume that $u_3=0$ and $v_3\in(pR)^D\setminus\{0\}$.
Then the reduced block matrix $M$ has the following types.

\begin{enumerate}
\item If $\alpha_1=0$ and $\xi_1=0$,
then
\[
\operatorname{type}(M)=
\begin{cases}
(k+1,l), & \rho'\in pR,\\
(k,l+1), & \rho'\notin pR.
\end{cases}
\]

\item If $\alpha_1\neq0$ and $\xi_1=0$,
then
\[
\operatorname{type}(M)=(k,l+1).
\]

\item If $\alpha_1=0$ and $\xi_1\neq0$,
then
\[
\operatorname{type}(M)=
\begin{cases}
(k-1,l+1), & \rho'\in pR,\\
(k,l+1), & \rho'\notin pR.
\end{cases}
\]

\item If $\alpha_1\neq0$ and $\xi_1\neq0$,
then
\[
\operatorname{type}(M)=
\begin{cases}
(k-2,l+2), & \alpha_1^t\xi_1=0,\\
(k-1,l+2), & \alpha_1^t\xi_1\neq0.
\end{cases}
\]
\end{enumerate}
\end{lemma}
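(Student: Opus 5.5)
The plan is to deduce this lemma from Lemma~\ref{lem:type-case-u-p-nonzero-v-zero} by transposition. Smith type is invariant under transposition, since the transpose of a Smith normal form is again one with the same invariant factors. So $\operatorname{type}(M)=\operatorname{type}(M^t)$, and it suffices to identify $M^t$ with a reduced matrix of the shape treated in that lemma.

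First I would write out
\[
M^t=
\begin{pmatrix}
\rho' & \widetilde{\xi}_1^t & 0 & v_3^t\\
\widetilde{\alpha}_1 & pI_k & 0 & 0\\
0 & 0 & I_l & 0\\
u_3 & 0 & 0 & 0
\end{pmatrix}.
\]
This has exactly the form of $M$, with the following substitutions: $\alpha_1$ and $\xi_1$ are interchanged, $u_3$ and $v_3$ are interchanged, and $(N,D)$ become $(D,N)$. The sizes change from $d\times n$ to $n\times d$. However, the proof of Lemma~\ref{lem:type-case-u-p-nonzero-v-zero} uses nothing about the ambient sizes or about $d\le n$. It only uses pivoting arguments inside the blocks and the fact that $p^2=0$. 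So it applies verbatim to matrices of this shape with arbitrary block dimensions.

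Under the hypothesis $u_3=0$ and $v_3\in(pR)^D\setminus\{0\}$, the transposed matrix has its "$u_3$-slot" equal to $v_3^t$, which is nonzero with entries in $pR$, and its "$v_3$-slot" equal to $u_3=0$. Hence Lemma~\ref{lem:type-case-u-p-nonzero-v-zero} applies with the roles of $\alpha_1$ and $\xi_1$ swapped. The correspondence of cases is then as follows.
\begin{enumerate}
\item Case (1), where both $\alpha_1$ and $\xi_1$ are zero, is symmetric under the swap and gives case (1).
\item The case "$\alpha_1=0$, $\xi_1\neq0$" of the earlier lemma becomes "$\xi_1=0$, $\alpha_1\neq0$", which yields case (2) with type $(k,l+1)$.
\item The case "$\alpha_1\neq0$, $\xi_1=0$" of the earlier lemma becomes our case (3).
\item Case (4) depends only on whether $\alpha_1^t\xi_1=\xi_1^t\alpha_1$ is zero, which is symmetric under the swap.
\end{enumerate}

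The only point needing care is the claim that transposition preserves type and that the earlier proof is insensitive to the swapped dimensions. Both follow because elementary row and column operations on $M$ correspond to column and row operations on $M^t$. If one prefers not to rely on this dimension-independence, the fallback is to repeat the earlier argument directly. In the unit-pivot step one would use a unit entry of $\widetilde{\xi}_1$ in the first column to clear $v_3$; the products this creates are $p\cdot p=0$ multiples, so no new entries appear.
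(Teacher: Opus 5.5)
Your proposal is correct and follows the paper's proof: the paper likewise deduces this lemma from Lemma~\ref{lem:type-case-u-p-nonzero-v-zero} applied to $M^t$, using that transposition preserves Smith type and interchanges $u_3\leftrightarrow v_3$, $\alpha_1\leftrightarrow\xi_1$, $N\leftrightarrow D$. Your explicit case matching is accurate: the earlier case $\alpha_1=0,\ \xi_1\neq0$ becomes case~2 here, and $\alpha_1\neq0,\ \xi_1=0$ becomes case~3. This spells out what the paper leaves implicit.
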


\begin{proof}
This follows from Lemma~\ref{lem:type-case-u-p-nonzero-v-zero} by transposition.
Indeed, transposition preserves Smith type and interchanges
\[
u_3\leftrightarrow v_3,\qquad
\alpha_1\leftrightarrow \xi_1,\qquad
N\leftrightarrow D.
\]
Thus the assumptions
\[
u_3\in(pR)^N\setminus\{0\},\qquad v_3=0
\]
in Lemma~\ref{lem:type-case-u-p-nonzero-v-zero} correspond exactly to
\[
u_3=0,\qquad v_3\in(pR)^D\setminus\{0\}
\]
here. Applying Lemma~\ref{lem:type-case-u-p-nonzero-v-zero} to $M^t$
gives the stated alternatives.
\end{proof}

\begin{lemma}
\label{lem:type-case-u-p-nonzero-v-p-nonzero}
Assume that
$u_3\in (pR)^N\setminus\{0\}$ and $v_3\in (pR)^D\setminus\{0\}$.
Then the reduced block matrix $M$ has the following types.

\begin{enumerate}
\item If $\alpha_1=0$ and $\xi_1=0$, then
\[
\operatorname{type}(M)=
\begin{cases}
(k+2,l), & \rho'\in pR,\\
(k,l+1), & \rho'\notin pR.
\end{cases}
\]

\item If exactly one of $\alpha_1$ and $\xi_1$ is nonzero, then
\[
\operatorname{type}(M)=(k,l+1).
\]

\item If $\alpha_1\neq0$ and $\xi_1\neq0$, then
\[
\operatorname{type}(M)=
\begin{cases}
(k-2,l+2), & \alpha_1^t\xi_1=0,\\
(k-1,l+2), & \alpha_1^t\xi_1\neq0.
\end{cases}
\]
\end{enumerate}
\end{lemma}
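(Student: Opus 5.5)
We follow the pattern of Lemmas~\ref{lem:type-case-u3-v3-zero} and~\ref{lem:type-case-u-p-nonzero-v-zero}. Since $u_3$ and $v_3$ are both nonzero with entries in $pR$, we first normalize them. Invertible column operations inside the last block of $N$ columns, and row operations inside the last block of $D$ rows, do not touch the other blocks of $M$, because those blocks are zero there. Using them we may assume $u_3=(p,0,\ldots,0)$ and $v_3=(p,0,\ldots,0)^t$. We then split off $I_l$, drop the zero rows and columns, and reduce the problem to the Smith type of
\[
M_0=\begin{pmatrix}
\rho' & \widetilde\alpha_1^t & p\\
\widetilde\xi_1 & pI_k & 0\\
p & 0 & 0
\end{pmatrix}.
\]

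\emph{Case (1): $\alpha_1=0$ and $\xi_1=0$.}
\begin{itemize}
\item If $\rho'\notin pR$, then $\rho'$ is a unit pivot. Clearing its row and column only creates corrections of the form $p\cdot\rho'^{-1}\cdot p=0$, so we are left with $pI_k$. The type is $(k,l+1)$.
\item If $\rho'\in pR$, every entry of $M_0$ lies in $pR$. Passing to the coefficient of $p$ gives a matrix over $\mathbb F_q$ that contains $\begin{pmatrix} a&0&1\\0&I_k&0\\1&0&0\end{pmatrix}$, which has rank $k+2$. The type is $(k+2,l)$.
\end{itemize}

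\emph{Case (3): $\alpha_1\neq0$ and $\xi_1\neq0$.} A unit entry of $\widetilde\alpha_1$ in the first row eliminates the $p$ in the last column. The coefficient used is in $pR$, so the only correction lands in the pivot column, where it multiplies a $p$ from $pI_k$ and vanishes since $p^2=0$. The symmetric operation with a unit entry of $\widetilde\xi_1$ removes the $p$ in the last row. Both $u_3$ and $v_3$ are now gone, and Lemma~\ref{lem:type-case-u3-v3-zero}(3) gives the stated types.

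\emph{Case (2): exactly one of $\alpha_1,\xi_1$ is nonzero.} By transposition we may take $\alpha_1\neq0$ and $\xi_1=0$. As in Case (3), the unit entry of $\widetilde\alpha_1$ removes $u_3$ without creating new entries. This leaves the configuration $u_3=0$, $v_3\in(pR)^D\setminus\{0\}$, $\alpha_1\neq0$, $\xi_1=0$. By Lemma~\ref{lem:type-case-u-zero-v-p-nonzero}(2) the type is $(k,l+1)$, whatever $\rho'$ is.

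To double-check the count in Case (2) directly, normalize $\alpha_1=(1,0,\ldots,0)$ and use the unit $1$ as a pivot. Clearing its row and column turns the $(1,1)$ position of $pI_k$ into $-p\rho'$ and the first-column entry into $\rho'-p\rho'\cdot1$. The $p$ in the last row remains, and the $pI_{k-1}$ part remains. If $\rho'$ is a unit, the remaining block has one unit and $k-1+1$ $p$-factors: one unit from the pivot, one unit from $\rho'$, and the $p$-directions in $pI_{k-1}$ together with $v_3$. This seems to give $(k,l+2)$, which does not match. I would resolve this by carefully recomputing with Lemma~\ref{lem:type-case-u-zero-v-p-nonzero}(2) as the reference. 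The delicate point is that the unit $\rho'$ and the entry $-p\rho'$ sit in the same column, so after pivoting on $\rho'$ the entry $-p\rho'$ gets absorbed and the count drops back to $(k,l+1)$.

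I expect this bookkeeping in Case (2) to be the main obstacle, namely checking that no stray $p$-entries survive the elimination. The safest route is the mod-$p$ rank invariant: the number of unit factors is the rank of $M_0$ mod $p$, and the number of $p$-factors is the rank of the induced map $\ker\to\operatorname{coker}$ described in the proof of Proposition~\ref{prop:type-class-cardinality}. In Case (2), the rank of $M_0$ mod $p$ is $1$ or $2$, and I would compute this induced map explicitly to confirm the type.
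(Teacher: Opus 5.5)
Your main argument is correct and is essentially the paper's proof. Case (1) is identical to the paper's. In Cases (2) and (3) the paper uses a unit entry of $\widetilde\xi_1$ to remove $v_3$ and then falls back on Lemma~\ref{lem:type-case-u-p-nonzero-v-zero}. You instead use a unit entry of $\widetilde\alpha_1$ to remove $u_3$, which is the transposed move, and fall back on Lemma~\ref{lem:type-case-u-zero-v-p-nonzero}(2), or on Lemma~\ref{lem:type-case-u3-v3-zero}(3) after removing both vectors.

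The ``double-check'' paragraph in Case (2), however, is miscomputed and should be deleted rather than ``resolved''. Take $\alpha_1=(1,0,\ldots,0)$. Clearing the first row subtracts $\rho'$ times the pivot column $(1,p,0,\ldots,0)^t$ from the first column. So the top-left entry becomes $\rho'-\rho'=0$, not $\rho'-p\rho'$. The entry $-p\rho'$ appears only in the row of the former $p$, and the same step with coefficient $p$ kills $u_3$. After also clearing the pivot column, you are left with
\[
\begin{pmatrix}
-p\rho' & 0 & 0\\
0 & pI_{k-1} & 0\\
p & 0 & 0
\end{pmatrix}.
\]
Its first column is a single $p$-direction whether or not $\rho'$ is a unit, which gives $(k,l+1)$ at once.

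Consistently, $M_0 \bmod p$ has only its first row nonzero. So its rank is exactly $1$, not ``$1$ or $2$'', and this alone excludes $l+2$ unit factors. Your suggested mechanism, a surviving unit $\rho'$ absorbing $-p\rho'$, does not occur. In any case, absorbing a $p$-entry could not reduce the number of unit factors.
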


\begin{proof}
We distinguish the cases according to whether $\alpha_1$ and
$\xi_1$ are zero.

First suppose that
$\alpha_1=0$ and $\xi_1=0$.
Then the reduced matrix has the form
\[
M=
\begin{pmatrix}
\rho' & 0 & 0 & u_3^t\\
0 & pI_k & 0 & 0\\
0 & 0 & I_l & 0\\
v_3 & 0 & 0 & 0
\end{pmatrix}.
\]
If $\rho'\notin pR$, then $\rho'$ is a unit. Using $\rho'$ as a
pivot, we split off one invariant factor equal to $1$. Since
$u_3\in(pR)^N$ and $v_3\in(pR)^D$, the entries of $u_3$ and
$v_3$ can be eliminated using this unit pivot, and the block $pI_k$
remains equivalent to $pI_k$. Hence the type is $(k,l+1)$.

Now suppose that $\rho'\in pR$. 
Since $u_3\in(pR)^N$ and $v_3\in(pR)^D$, after separating the block $I_l$, every entry of the remaining matrix lies in $pR$.
Write
\[
\rho'=p\widetilde a,\qquad
u_3=p\widetilde w,\qquad
v_3=p\widetilde z,
\]
where
\[
a \in\mathbb{F}_q,\qquad
w\in\mathbb{F}_q^N\setminus\{0\},\qquad
z\in\mathbb{F}_q^D\setminus\{0\}.
\]
Taking the coefficients of $p$ entrywise,
the number of invariant factors equal to $p$ contributed by this part is the rank over $\mathbb F_q$ of
\[
\begin{pmatrix}
a & 0 & w^t\\
0 & I_k & 0\\
z & 0 & 0
\end{pmatrix}.
\]
The block $I_k$ contributes rank $k$. Since both
$w$ and $z$ are nonzero, the first row and
the first column contribute two additional independent directions.
Hence this matrix has rank $k+2$ over $\mathbb{F}_q$. Therefore the
type is $(k+2,l)$.

Next suppose that at least one of $\alpha_1$ and $\xi_1$ is
nonzero. It is enough to consider the case $\xi_1\neq0$, since the
case $\alpha_1\neq0$ and $\xi_1=0$ follows by transposition.
Since $v_3\in(pR)^D$, a unit pivot from $\widetilde{\xi}_1$ eliminates $v_3$
without changing the zero blocks; any entries created are multiples of
$p^2$, and hence vanish. Thus the remaining computation is the same
as in Lemma~\ref{lem:type-case-u-p-nonzero-v-zero} with $v_3=0$ and
$\xi_1\neq0$.

If $\alpha_1=0$, this gives type
\[
(k,l+1).
\]
If $\alpha_1\neq0$, this gives
\[
\operatorname{type}(M)=
\begin{cases}
(k-2,l+2), & \alpha_1^t\xi_1=0,\\
(k-1,l+2), & \alpha_1^t\xi_1\neq0.
\end{cases}
\]
The case $\alpha_1\neq0$ and $\xi_1=0$ follows by transposition.
\end{proof}

\begin{lemma}
\label{lem:type-case-u-nonp-v-zero}
Assume that $u_3\notin (pR)^N$ and $v_3=0$.
Then the reduced block matrix $M$ has the following types:
\[
\operatorname{type}(M)=
\begin{cases}
(k-1,l+2), & \text{if } \xi_1\neq 0,\\
(k,l+1), & \text{if } \xi_1=0.
\end{cases}
\]
\end{lemma}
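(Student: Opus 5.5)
Since $v_3=0$, the last block row of $M$ is zero and can be dropped. The block $I_l$ separates and contributes $l$ invariant factors equal to $1$. It therefore suffices to determine the Smith type of
\[
M_0=
\begin{pmatrix}
\rho' & \widetilde{\alpha}_1^t & u_3^t\\
\widetilde{\xi}_1 & pI_k & 0
\end{pmatrix}.
\]
The hypothesis $u_3\notin(pR)^N$ means that some entry of $u_3$ is a unit. After a permutation of the last $N$ columns, and an invertible change of those columns lifted from $\mathbb F_q$, we may assume $u_3=(1,0,\dots,0)^t$. We then use this unit entry as a pivot.

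First we clear the first row. Column operations with the pivot column eliminate $\rho'$ and every entry of $\widetilde{\alpha}_1^t$. The pivot column is zero below the first row, so these operations change no other entry. Splitting off the pivot produces one invariant factor equal to $1$, and what remains is
\[
\begin{pmatrix}\widetilde{\xi}_1 & pI_k & 0\end{pmatrix},
\]
a $k\times(1+k+N-1)$ matrix. Note that $\rho'$ and $\alpha_1$ have dropped out completely, which is consistent with the statement depending only on $\xi_1$.

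If $\xi_1=0$, the remainder is $pI_k$ followed by zero columns, so it has type $(k,0)$. The total type is then $(k,l+1)$.

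If $\xi_1\neq0$, we change coordinates in the $k$ rows by an invertible matrix over $\mathbb F_q$, lifted to $R$, so that $\xi_1=(1,0,\dots,0)^t$. Conjugating $pI_k$ by a lifted invertible matrix keeps it equal to $pI_k$ modulo $p^2$, as in the proof of Proposition~\ref{prop:type-class-cardinality}. The entry $1$ in the first column now serves as a unit pivot. It eliminates the entry $p$ in the same row without creating new entries, because the pivot column is zero elsewhere. Splitting it off leaves $pI_{k-1}$ together with zero columns. This gives $k-1$ factors equal to $p$ and one more factor equal to $1$, so the total type is $(k-1,l+2)$.

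The only delicate point is to check that each elimination creates no stray entries. I would handle this in the same way as the preceding lemmas: every pivot column or pivot row is supported on a single entry apart from entries lying in $pR$, and any products of two elements of $pR$ vanish because $p^2=0$. Since the pivot here comes from $u_3$ in the first row and the first row has no other nonzero data below it in that column, this step should be straightforward.
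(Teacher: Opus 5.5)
Your proof is correct and takes essentially the same route as the paper. You use a unit entry of $u_3$, whose column is zero below the first row, to clear and split off the first row, which removes $\rho'$ and $\widetilde{\alpha}_1$. In the remaining block $(\widetilde{\xi}_1\ \ pI_k\ \ 0)$ you then either read off $pI_k$ when $\xi_1=0$, or use a unit entry of $\widetilde{\xi}_1$ as a second pivot that absorbs one $p$-factor.

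You are slightly more explicit than the paper when $\xi_1\neq0$, by compensating the row change with a column change on the $pI_k$ columns. One small point there: a lift of an $\mathbb F_q$-matrix only gives $\widetilde{\xi}_1\equiv e_1 \pmod p$, so the pivot column may still carry entries in $pR$. These are harmless, because clearing them with the unit pivot only produces products lying in $p^2R=0$.
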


\begin{proof}
Since $u_3\notin(pR)^N$, some entry of $u_3$ is a unit. By column
operations within the last $N$ columns, we may assume that
\[
u_3^t=(1,0,\ldots,0).
\]
The corresponding pivot column has zero entries outside the first row.
Hence, using this pivot column, we can eliminate all other entries in
the first row by column operations. In particular, $\rho'$ and
$\widetilde{\alpha}_1^t$ are eliminated. Thus one invariant factor equal to $1$
is split off.

Since $v_3=0$, after this unit factor has been split off, the only
possible remaining contribution from the first column is $\xi_1$.
The blocks $pI_k$ and $I_l$ are otherwise unchanged at this stage.

If $\xi_1\neq0$, then a unit entry of $\widetilde{\xi}_1$ can be used as a
pivot. This splits off another invariant factor equal to $1$. This
pivot uses one of the rows belonging to the $pI_k$-block, and hence
one invariant factor equal to $p$ from $pI_k$ is absorbed. Therefore the
remaining $p$-part is equivalent to $pI_{k-1}$, and the resulting
type is
\[
(k-1,l+2).
\]

If $\xi_1=0$, then no further invariant factor is created from the
first column. The block $pI_k$ remains unchanged, and the only new
unit factor is the one coming from the unit entry of $u_3$. Hence the
resulting type is
\[
(k,l+1).
\]
\end{proof}

\begin{lemma}
\label{lem:type-case-u-zero-v-nonp}
Assume that $u_3=0$ and $v_3\notin (pR)^D$.
Then the reduced block matrix $M$ has the following types:
\[
\operatorname{type}(M)=
\begin{cases}
(k-1,l+2), & \text{if } \alpha_1\neq 0,\\
(k,l+1), & \text{if } \alpha_1=0.
\end{cases}
\]
\end{lemma}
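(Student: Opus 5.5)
The plan is to deduce this from Lemma~\ref{lem:type-case-u-nonp-v-zero} by transposition, exactly as Lemma~\ref{lem:type-case-u-zero-v-p-nonzero} was deduced from Lemma~\ref{lem:type-case-u-p-nonzero-v-zero}. Transposition preserves Smith type, and the transpose of the reduced matrix $M$ has the same block shape. The only changes are the roles of the data:
\[
u_3\leftrightarrow v_3,\qquad \alpha_1\leftrightarrow\xi_1,\qquad N\leftrightarrow D,
\]
while $\rho'$, $pI_k$ and $I_l$ are unchanged, since these blocks are symmetric. The hypotheses $u_3=0$, $v_3\notin(pR)^D$ become, for $M^t$, exactly the hypotheses $u_3\notin(pR)^N$, $v_3=0$ of Lemma~\ref{lem:type-case-u-nonp-v-zero}, with the roles of $N$ and $D$ interchanged. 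That lemma never used $N\le D$ or any relation between $N$ and $D$, so this interchange is harmless.

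The condition $\xi_1\neq0$ in that lemma, read for $M^t$, becomes $\alpha_1\neq0$ for $M$. Applying it therefore gives type $(k-1,l+2)$ when $\alpha_1\neq0$ and $(k,l+1)$ when $\alpha_1=0$, as claimed.

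For robustness, I would also indicate the direct argument that the transposition mirrors. First normalize $v_3=(1,0,\dots,0)^t$ by row operations on the last $D$ rows. Its pivot row is zero outside the first column, so the pivot eliminates $\rho'$ and $\widetilde\xi_1$ in that column and splits off one unit factor. If $\alpha_1\neq0$, a unit entry of $\widetilde\alpha_1$ gives a second unit pivot; it consumes one $p$-direction of $pI_k$, leaving $pI_{k-1}$. If $\alpha_1=0$, nothing further arises.

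The only point needing care is checking that the block structure of $M^t$ really matches the template: the positions of $\widetilde\alpha_1^t$ and $\widetilde\xi_1$ swap, and $u_3^t$ and $v_3$ swap. Once that is noted, the rest is immediate.
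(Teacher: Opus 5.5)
Your proposal is correct and takes the same approach as the paper: it also deduces this lemma from Lemma~\ref{lem:type-case-u-nonp-v-zero} by transposing $M$, interchanging $u_3\leftrightarrow v_3$, $\alpha_1\leftrightarrow\xi_1$, and $N\leftrightarrow D$. Your supplementary direct pivot argument, using the unit entry of $v_3$ and then a unit entry of $\widetilde\alpha_1$, is also sound, though the paper does not need it.
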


\begin{proof}
This follows from Lemma~\ref{lem:type-case-u-nonp-v-zero} by transposition.
Indeed, transposition preserves Smith type and interchanges
\[
u_3\leftrightarrow v_3,\qquad
\alpha_1\leftrightarrow \xi_1,\qquad
N\leftrightarrow D.
\]
Thus the assumptions
\[
u_3\notin(pR)^N,\qquad v_3=0
\]
in Lemma~\ref{lem:type-case-u-nonp-v-zero} correspond to
\[
u_3=0,\qquad v_3\notin(pR)^D
\]
here, and the condition $\xi_1\neq0$ corresponds to
$\alpha_1\neq0$. Applying
Lemma~\ref{lem:type-case-u-nonp-v-zero} to $M^t$ gives the stated
alternatives.
\end{proof}

\begin{lemma}
\label{lem:type-case-u-nonp-v-p-nonzero}
Assume that
$u_3\notin(pR)^N$ and $v_3\in(pR)^D\setminus\{0\}$.
Then the reduced block matrix $M$ has the following types:
\[
\operatorname{type}(M)=
\begin{cases}
(k-1,l+2), & \text{if } \xi_1\neq0,\\
(k+1,l+1), & \text{if } \xi_1=0.
\end{cases}
\]
\end{lemma}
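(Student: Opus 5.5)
The plan is to mirror the proof of Lemma~\ref{lem:type-case-u-nonp-v-zero}, starting from the reduced block matrix $M$ and first using the unit entry of $u_3$. Since $u_3\notin(pR)^N$, some entry of $u_3$ is a unit. Column operations within the last $N$ columns, lifted to $\operatorname{GL}_N(R)$, bring $u_3^t$ to $(1,0,\ldots,0)$. The corresponding pivot column has first entry $1$; its only other nonzero entries lie in $v_3$ rows, and these are zero in this column because the last $N$ columns of $M$ are zero below the first row. Using this pivot we clear the rest of the first row, namely $\rho'$, $\widetilde\alpha_1^t$ and the remaining entries of $u_3^t$. These operations are column operations adding multiples of the pivot column, which has zeros outside the first row, so nothing else changes. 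We then clear the rest of the pivot column, which is already zero. This splits off one invariant factor $1$ and deletes the first row and this column.

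What remains is the block matrix with first column $(\widetilde\xi_1, 0, v_3)$ (rows for the $pI_k$, $I_l$ and $v_3$ blocks), together with $pI_k$, $I_l$, and $N-1$ zero columns. The $I_l$ block splits off, contributing $l$ factors $1$. We are left with
\[
M_2=\begin{pmatrix}\widetilde\xi_1 & pI_k\\ v_3 & 0\end{pmatrix},
\]
padded by zero columns.

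If $\xi_1\neq0$, choose coordinates in the $k$-block so that $\xi_1=(1,0,\dots,0)^t$, as in Lemma~\ref{lem:type-case-u3-v3-zero}. The unit in the first column is a pivot. Eliminating $v_3\in(pR)^D$ uses coefficients in $pR$, and the resulting fill-in in the $pI_k$ columns is $p\cdot p=0$. So $v_3$ disappears, as in the proof of Lemma~\ref{lem:type-case-u-p-nonzero-v-p-nonzero}. Clearing the rest of the pivot row removes the $p$ in position $(1,1)$ of $pI_k$, so one $p$-factor is absorbed. This gives $pI_{k-1}$ and type $(k-1,l+2)$.

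If $\xi_1=0$, then every entry of $M_2$ lies in $pR$. Writing $v_3=p\widetilde z$ with $z\neq0$ and taking $p$-coefficients, the number of $p$-factors equals
\[
\operatorname{rank}_{\mathbb F_q}\begin{pmatrix}0& I_k\\ z&0\end{pmatrix}=k+1.
\]
This gives type $(k+1,l+1)$.

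The main point of care is that the column operations using the $u_3$ pivot do not disturb $v_3$. This holds because the $u_3$ columns vanish outside the first row. It is also why the answer is not simply the transpose of the $u_3$-zero case: the first column survives, and $v_3$ adds a new $p$-factor precisely when $\xi_1=0$.
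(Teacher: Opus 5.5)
Your proposal is correct and follows the paper's proof. You split off the unit pivot coming from $u_3$ and the block $I_l$; when $\xi_1\neq0$ you use a unit entry of $\widetilde\xi_1$ as a pivot and kill $v_3$ with the $p\cdot p=0$ fill-in argument, and when $\xi_1=0$ you show $v_3$ adds one $p$-factor. The differences are minor: the paper finishes the $\xi_1\neq0$ case by reducing to Lemma~\ref{lem:type-case-u-nonp-v-zero} rather than computing directly, and your $\mathbb F_q$-rank computation makes explicit the paper's brief claim that $v_3$ contributes one additional invariant factor $p$.
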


\begin{proof}
Since $u_3\notin(pR)^N$, some entry of $u_3$ is a unit. By column
operations within the last $N$ columns, we may assume that
\[
u_3^t=(1,0,\ldots,0).
\]
The corresponding pivot column has zero entries outside the first row.
Hence, using this pivot column, we can eliminate all other entries in
the first row by column operations. In particular, $\rho'$ and
$\widetilde{\alpha}_1^t$ are eliminated. Thus one invariant factor equal to
$1$ is split off.

After this unit factor has been split off, the remaining contribution
from the first column is controlled by $\xi_1$ and $v_3$, while the
block $I_l$ still contributes $l$ invariant factors equal to $1$.

First suppose that $\xi_1\neq0$. Then a unit entry of $\widetilde{\xi}_1$ can
be used as a pivot. Since $v_3\in(pR)^D$, the entries of $v_3$ can
be eliminated using this pivot with coefficients in $pR$. The entries
created in the remaining blocks are multiples of $p^2$, and hence
vanish. Therefore the matrix is equivalent to the case $v_3=0$ with
$u_3\notin(pR)^N$ and $\xi_1\neq0$. By
Lemma~\ref{lem:type-case-u-nonp-v-zero}, the type is
\[
(k-1,l+2).
\]

Next suppose that $\xi_1=0$. Then no further unit factor is created
from the first column. Since $v_3\in(pR)^D\setminus\{0\}$, the vector
$v_3$ contributes one additional invariant factor equal to $p$.
The block $pI_k$ is otherwise unchanged after the unit factor coming
from $u_3$ has been split off. 
Therefore the matrix has $k+1$ invariant factors equal to $p$, and $l+1$ invariant factors equal to $1$.
Hence
\[
\operatorname{type}(M)=(k+1,l+1).
\]
\end{proof}

\begin{lemma}
\label{lem:type-case-u-p-nonzero-v-nonp}
Assume that
$u_3\in(pR)^N\setminus\{0\}$ and $v_3\notin(pR)^D$.
Then the reduced block matrix $M$ has the following types:
\[
\operatorname{type}(M)=
\begin{cases}
(k-1,l+2), & \text{if } \alpha_1\neq0,\\
(k+1,l+1), & \text{if } \alpha_1=0.
\end{cases}
\]
\end{lemma}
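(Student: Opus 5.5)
This is the transpose of Lemma~\ref{lem:type-case-u-nonp-v-p-nonzero}, and I would prove it exactly as Lemmas~\ref{lem:type-case-u-zero-v-p-nonzero} and~\ref{lem:type-case-u-zero-v-nonp} were deduced from their partners. Transposition preserves Smith type. Applied to the reduced matrix $M$, it gives a matrix of the same shape: the roles of the first row and first column are exchanged, so that
\[
u_3\leftrightarrow v_3,\qquad \alpha_1\leftrightarrow\xi_1,\qquad N\leftrightarrow D,
\]
while $\rho'$, the block $pI_k$ and the block $I_l$ are unchanged.

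Concretely, $M^t$ is a reduced matrix of the form treated in Subsection~\ref{subsec:type-criteria}, with the data $(\rho',\xi_1,\alpha_1,v_3,u_3)$ in place of $(\rho',\alpha_1,\xi_1,u_3,v_3)$. The hypotheses $u_3\in(pR)^N\setminus\{0\}$ and $v_3\notin(pR)^D$ then become: the new ``$u_3$'' (which is $v_3$) is not in $(pR)^D$, and the new ``$v_3$'' (which is $u_3$) lies in $(pR)^N\setminus\{0\}$. These are exactly the hypotheses of Lemma~\ref{lem:type-case-u-nonp-v-p-nonzero}. That lemma distinguishes the cases by whether its ``$\xi_1$'' is zero, and this quantity is our $\alpha_1$. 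It therefore gives type $(k-1,l+2)$ if $\alpha_1\neq0$ and type $(k+1,l+1)$ if $\alpha_1=0$. Since $\operatorname{type}(M)=\operatorname{type}(M^t)$, the claim follows.

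The only point needing care is that the transposed matrix really has the reduced form, with the lifts $\widetilde{\alpha}_1$ and $\widetilde{\xi}_1$ in swapped positions. It also matters that the dimensions $N$ and $D$ may differ, including the possibility that one of them is zero. The proof of Lemma~\ref{lem:type-case-u-nonp-v-p-nonzero} never uses $N\le D$ or any relation between $N$ and $D$, so it applies verbatim with $N$ and $D$ swapped.

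As a sanity check, one could redo the argument directly instead. Use a unit entry of $v_3$ as a pivot to clear the first column, splitting off one factor equal to $1$. If $\alpha_1\neq0$, a unit entry of $\widetilde{\alpha}_1$ gives a second unit pivot, which clears $u_3$ modulo $p^2$ and absorbs one $p$ from $pI_k$, giving type $(k-1,l+2)$. If $\alpha_1=0$, the nonzero $p$-vector $u_3$ contributes an extra factor equal to $p$, giving type $(k+1,l+1)$.
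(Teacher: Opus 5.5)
Your proposal is correct and takes the same approach as the paper, which also applies Lemma~\ref{lem:type-case-u-nonp-v-p-nonzero} to $M^t$, using that transposition preserves Smith type and interchanges $u_3\leftrightarrow v_3$, $\alpha_1\leftrightarrow\xi_1$, $N\leftrightarrow D$. Your extra remarks are also accurate: the partner lemma's proof uses no relation between $N$ and $D$, and your direct pivot computation checks out.
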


\begin{proof}
This follows from
Lemma~\ref{lem:type-case-u-nonp-v-p-nonzero} by transposition.
Indeed, transposition preserves Smith type and interchanges
\[
u_3\leftrightarrow v_3,\qquad
\alpha_1\leftrightarrow \xi_1,\qquad
N\leftrightarrow D.
\]
Thus the assumptions
\[
u_3\notin(pR)^N,
\qquad
v_3\in(pR)^D\setminus\{0\}
\]
in Lemma~\ref{lem:type-case-u-nonp-v-p-nonzero} correspond to
\[
u_3\in(pR)^N\setminus\{0\},
\qquad
v_3\notin(pR)^D
\]
here.
Under this correspondence, the condition $\xi_1\neq0$ becomes $\alpha_1\neq0$.
Applying Lemma~\ref{lem:type-case-u-nonp-v-p-nonzero} to $M^t$
gives the stated alternatives.
\end{proof}

\begin{lemma}
\label{lem:type-case-u-nonp-v-nonp}
Assume that
$u_3\notin(pR)^N$ and $v_3\notin(pR)^D$.
Then the reduced block matrix $M$ has type $(k,l+2)$.
\end{lemma}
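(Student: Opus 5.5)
The plan is to follow the pattern of Lemma~\ref{lem:type-case-u-nonp-v-zero}. Since $u_3\notin(pR)^N$, some entry of $u_3$ is a unit, and column operations inside the last $N$ columns bring $u_3^t$ to $(1,0,\ldots,0)$. Using this pivot, all other entries of the first row are cleared by column operations; this removes $\rho'$ and $\widetilde\alpha_1^t$. The pivot column is zero outside the first row, so these operations create no new entries elsewhere. Row operations with the same pivot then remove the first-row entry from the remaining pivot row. One invariant factor equal to $1$ is split off, and the first row and the pivot column can be discarded.

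What remains is the matrix obtained from $M$ by deleting its first row and one of the last $N$ columns. Its leftover first column (the old first column with $\rho'$ deleted) has entries $\widetilde\xi_1$ in the $pI_k$ rows, $0$ in the $I_l$ rows, and $v_3$ in the last $D$ rows. All other columns attached to the $u_3$-block are now zero. Because $v_3\notin(pR)^D$, row operations inside the last $D$ rows bring $v_3$ to $(1,0,\ldots,0)^t$. This unit entry sits in a row that is zero apart from that entry, so it serves as a second unit pivot. Row operations with this pivot clear the rest of the column, namely the entries $\widetilde\xi_1$ in the $pI_k$ rows. The cleared rows are zero in the pivot's column except for the entry being removed, and the pivot row is zero elsewhere, so the $pI_k$ and $I_l$ blocks are untouched. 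This splits off a second invariant factor equal to $1$.

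After both pivots are removed, what remains is exactly $pI_k\oplus I_l$ together with zero rows and columns, so $M$ has $l+2$ invariant factors equal to $1$ and $k$ equal to $p$, i.e.\ type $(k,l+2)$. The answer does not depend on $\rho'$, $\alpha_1$ or $\xi_1$, which is consistent with the statement. The point that needs care, and the only real obstacle, is the order of elimination. One must pivot on the $u_3$ unit in its row and the $v_3$ unit in its column, not on entries of $\xi_1$ or $\alpha_1$ inside the $pI_k$ rows. Pivoting inside the $pI_k$ rows would absorb a $p$-direction, as in Lemma~\ref{lem:type-case-u-nonp-v-zero}. With the chosen pivots, every elimination acts along rows or columns that are otherwise zero, so the blocks $pI_k$ and $I_l$ never change. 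The argument is symmetric under transposition, so no separate case analysis is needed.
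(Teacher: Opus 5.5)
Your proposal is correct and follows essentially the same route as the paper. You normalize the unit in $u_3$ and clear $\rho'$ and $\widetilde\alpha_1^t$ along the first row to split off one unit factor, then normalize the unit in $v_3$ and clear $\widetilde\xi_1$ along the first column to split off a second, leaving $pI_k\oplus I_l$ with both pivots in rows and columns that are otherwise zero, exactly as in the paper's proof.
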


\begin{proof}
Since $u_3\notin(pR)^N$, some entry of $u_3$ is a unit. By column
operations within the last $N$ columns, we may assume that
\[
u_3^t=(1,0,\ldots,0).
\]
The corresponding pivot column has zero entries outside the first row.
Hence, using this pivot column, we can eliminate all other entries in
the first row by column operations. In particular, $\rho'$ and
$\widetilde{\alpha}_1^t$ are eliminated. Thus one invariant factor equal to
$1$ is split off. These column operations do not affect the entries
below the first row.

Similarly, since $v_3\notin(pR)^D$, some entry of $v_3$ is a unit.
By row operations within the last $D$ rows, we may assume that
\[
v_3=(1,0,\ldots,0)^t.
\]
The corresponding pivot row has zero entries outside the first column.
Hence, using this pivot row, we can eliminate all other entries in the
first column by row operations. In particular, $\widetilde{\xi}_1$ is eliminated.
Thus a second invariant factor equal to $1$ is split off.

The two unit pivots come from the $u_3$- and $v_3$-blocks and do not
change the blocks $pI_k$ and $I_l$. Consequently, after splitting
off these two unit factors, the remaining nonzero part is equivalent
to
\[
pI_k\oplus I_l.
\]
Thus
\[
\operatorname{type}(M)=(k,l+2).
\]
\end{proof}

\begin{corollary}
\label{cor:possible-subtypes}
Let $M$ be the reduced block matrix
\[
M=
\begin{pmatrix}
\rho' & \widetilde{\alpha}_1^t & 0 & u_3^t\\
\widetilde{\xi}_1 & pI_k & 0 & 0\\
0 & 0 & I_l & 0\\
v_3 & 0 & 0 & 0
\end{pmatrix}.
\]
If $M$ has type $(i,j)$, then the type $(k,l)$ of the submatrix
$y$ belongs to
\[
\begin{aligned}
S_{i,j}=\{&
(i,j),\ (i-1,j),\ (i-2,j),\ (i,j-1),\ (i+1,j-1),\\
&(i-1,j-1),\ (i+2,j-2),\ (i+1,j-2),\ (i,j-2)\}.
\end{aligned}
\]
Here pairs outside $I_{d-1}$ are omitted. Moreover, for each listed
pair, the preceding lemmas specify the conditions under which that pair
contributes.
\end{corollary}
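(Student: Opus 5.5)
The corollary is a bookkeeping consequence of the nine case lemmas (Lemmas~\ref{lem:type-case-u3-v3-zero}--\ref{lem:type-case-u-nonp-v-nonp}). Each of them takes the type $(k,l)$ of $y$ and lists every possible type $(i,j)$ of $M$. I would therefore run through all nine cases for $(u_3,v_3)$. In each case I would record the displacement $(i-k,\,j-l)$ for every alternative, then invert these to get $(k,l)=(i,j)-(\text{displacement})$. Finally I would check that every pair so obtained lies in $S_{i,j}$.

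The displacements are collected as follows. From Lemma~\ref{lem:type-case-u3-v3-zero} we get $(0,0)$, $(1,0)$, $(0,1)$, $(-1,1)$, $(-2,2)$ and $(-1,2)$. Lemmas~\ref{lem:type-case-u-p-nonzero-v-zero} and~\ref{lem:type-case-u-zero-v-p-nonzero} give nothing new: they produce $(1,0)$, $(0,1)$, $(-1,1)$, $(-2,2)$ and $(-1,2)$. Lemma~\ref{lem:type-case-u-p-nonzero-v-p-nonzero} adds $(2,0)$. Lemmas~\ref{lem:type-case-u-nonp-v-zero} and~\ref{lem:type-case-u-zero-v-nonp} give $(-1,2)$ and $(0,1)$. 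Lemmas~\ref{lem:type-case-u-nonp-v-p-nonzero} and~\ref{lem:type-case-u-p-nonzero-v-nonp} add $(1,1)$ alongside $(-1,2)$. Lemma~\ref{lem:type-case-u-nonp-v-nonp} adds $(0,2)$.

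The complete set of displacements is therefore
\[
(0,0),(1,0),(2,0),(0,1),(-1,1),(1,1),(-2,2),(-1,2),(0,2).
\]
Subtracting each from $(i,j)$ gives exactly the nine pairs listed in $S_{i,j}$, in order:
\[
(i,j),\ (i-1,j),\ (i-2,j),\ (i,j-1),\ (i+1,j-1),\ (i-1,j-1),\ (i+2,j-2),\ (i+1,j-2),\ (i,j-2).
\]

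The only subtle point is to make sure the nine cases are exhaustive. Each of $u_3$ and $v_3$ falls into one of three classes (zero, nonzero with all entries in $pR$, or having a unit entry), giving nine combinations. Each combination is covered by one of the lemmas, either directly or by the transposition symmetry, and within each lemma the subcases on $\alpha_1$, $\xi_1$, $\rho'$ and $\alpha_1^t\xi_1$ are exhaustive. Degenerate dimensions need no separate treatment, since the corresponding case is simply void. The reduction of Section~5.2 shows that $M$ and the original block matrix have the same Smith type, so the conclusion transfers to the original matrix. Pairs with negative entries or lying outside $I_{d-1}$ cannot occur, because $(k,l)$ is by assumption the type of a $(d-1)\times(n-1)$ matrix; this is why such pairs are omitted from $S_{i,j}$.

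The "moreover" clause needs no further proof: it is the statement that the conditions under which each displacement occurs are exactly those recorded in the lemmas. Those conditions will be counted in the next step.
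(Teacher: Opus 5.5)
Your proposal is correct and is essentially the paper's argument: the paper also reads the lists in Lemmas~\ref{lem:type-case-u3-v3-zero}--\ref{lem:type-case-u-nonp-v-nonp} in reverse, and it gets exhaustiveness from the three-way classification of each of $u_3$ and $v_3$. Your explicit table of displacements $(i-k,j-l)$ is accurate; it makes the paper's brief ``reading off'' step concrete and confirms that the nine displacements give exactly the nine pairs in $S_{i,j}$.
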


\begin{proof}
The preceding lemmas describe the type of $M$ in terms of the type
$(k,l)$ of the submatrix $y$. We now read those lists in the
opposite direction: fixing the resulting type $(i,j)$, we determine
which submatrix types $(k,l)$ can give rise to it.

Each of $u_3$ and $v_3$ is in exactly one of the following three
classes: it is zero, it is nonzero and contained in the corresponding
module $(pR)^N$ or $(pR)^D$, or it is not contained in that module.
Thus the preceding lemmas exhaust all possibilities for
$(u_3,v_3)$. Reading off the resulting types from
Lemmas~\ref{lem:type-case-u3-v3-zero}--\ref{lem:type-case-u-nonp-v-nonp}
gives the listed set $S_{i,j}$.
\end{proof}

\subsection{Enumeration of $N_{i,j}(\rho;k,l)$}
\label{subsec:enumeration-N}

Throughout this subsection, fix $(i,j)\in I_d$.
We now compute $N_{i,j}(\rho;k,l)$ for
$(k,l)\in S_{i,j}$. By Corollary~\ref{cor:possible-subtypes}, the
type conditions are expressed in terms of
\[
\rho',\qquad \alpha_1,\qquad \xi_1,\qquad u_3,\qquad v_3.
\]
It remains to translate the conditions on $\rho'$ into conditions on
the finite-field variables
\[
\alpha_1,\beta_1,\xi_1,\eta_1,
\alpha_2,\beta_2,\xi_2,\eta_2
\]
and then count the resulting systems.
Some formulas below contain apparent negative powers, such as $q^{i+2j-1}$.
In the boundary cases these expressions are interpreted after
simplification: the apparent negative powers either cancel with other
factors or occur together with zero factors.

Put 
\[
A = 
\widetilde{\beta}_1^t\widetilde{\xi}_1
+
\widetilde{\alpha}_1^t\widetilde{\eta}_1
+
\widetilde{\alpha}_2^t\widetilde{\xi}_2,
\qquad
B =  
\widetilde{\beta}_1^t\widetilde{\eta}_1
+
\widetilde{\beta}_2^t\widetilde{\xi}_2
+
\widetilde{\alpha}_2^t\widetilde{\eta}_2,
\]
so that $\rho' = \rho - A - pB$.
Reducing $A$ modulo $p$, we get
$\beta_1^t\xi_1+\alpha_1^t\eta_1+\alpha_2^t\xi_2$.
Thus, if
\[
\sigma=
\beta_1^t\xi_1+\alpha_1^t\eta_1+\alpha_2^t\xi_2\in\mathbb F_q,
\]
then $A-\widetilde{\sigma}\in pR$. Hence there exists a unique
$\kappa\in\mathbb F_q$ such that
\[
A=\widetilde{\sigma}+p\widetilde{\kappa}.
\]
Here $\kappa$ depends on
$(\beta_1,\xi_1,\alpha_1,\eta_1,\alpha_2,\xi_2)$, and records the $p$-component arising from the chosen representatives.
Similarly, 
$B\bmod p = \beta_1^t\eta_1+\beta_2^t\xi_2+\alpha_2^t\eta_2$.
Set
\[
\tau
=
\beta_1^t\eta_1+\beta_2^t\xi_2+\alpha_2^t\eta_2 \in \mathbb{F}_q.
\]
Since $B-\widetilde{\tau}\in pR$ and $p^2=0$, we have
\[
pB=p\widetilde{\tau}.
\]
Therefore
\[
A + pB = \widetilde{\sigma} + p\widetilde{\kappa} + p\widetilde{\tau} = \widetilde{\sigma} + p\left(\widetilde{\kappa+\tau}\right),
\]
where the last equality follows from $p^2=0$.
Indeed, $\widetilde{\kappa}+\widetilde{\tau}$ and
$\widetilde{\kappa+\tau}$ differ by an element of $pR$, which vanishes
after multiplication by $p$.

Although the dimensions of the variables depend on $(k,l)$, we use
the same notation $\sigma, \tau, \kappa$ throughout the following lemmas.

\begin{lemma}
\label{lem:carry-shift-count}
With the above notation, suppose $\alpha_1 = 0$ and $\xi_1 = 0$.
Then, for every $s,t\in\mathbb F_q$, the number of tuples
$(\beta_1,\eta_1,\alpha_2,\beta_2,\xi_2,\eta_2)$
satisfying
\[
\sigma=s,\qquad \kappa + \tau = t
\]
is equal to the number of such tuples satisfying
\[
\sigma=s,\qquad \tau=t.
\]
\end{lemma}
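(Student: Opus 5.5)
The plan is to fix the pair $(\alpha_2,\xi_2)$, observe that the carry $\kappa$ then becomes a constant, and absorb that constant into $\tau$ by a translation of $\beta_2$ or $\eta_2$. This gives a bijection that preserves $\sigma$ and shifts $\tau$.

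\emph{Step 1: $\kappa$ depends only on $(\alpha_2,\xi_2)$.} Since $\alpha_1=0$ and $\xi_1=0$, and $0$ is represented by $0$, we have $\widetilde\alpha_1=0$ and $\widetilde\xi_1=0$. Hence $A=\widetilde\alpha_2^t\widetilde\xi_2$. Its reduction modulo $p$ is $\sigma=\alpha_2^t\xi_2$, and by uniqueness of the decomposition $A=\widetilde\sigma+p\widetilde\kappa$, the carry $\kappa=c(\alpha_2,\xi_2)$ is a function of $(\alpha_2,\xi_2)$ alone. It does not depend on $\beta_1,\eta_1,\beta_2,\eta_2$. Meanwhile
\[
\tau=\beta_1^t\eta_1+\beta_2^t\xi_2+\alpha_2^t\eta_2 .
\]
It therefore suffices to prove the equality of counts for each fixed $(\alpha_2,\xi_2)$ with $\alpha_2^t\xi_2=s$. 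Put $c=c(\alpha_2,\xi_2)$.

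\emph{Step 2: translation.} Fix such $(\alpha_2,\xi_2)$. We compare the sets
\[
\{(\beta_1,\eta_1,\beta_2,\eta_2)\mid c+\tau=t\}
\quad\text{and}\quad
\{(\beta_1,\eta_1,\beta_2,\eta_2)\mid \tau=t\}.
\]
There are three cases.
\begin{itemize}
\item If $\xi_2\neq0$, choose $b\in\mathbb F_q^l$ with $b^t\xi_2=c$. The map $\beta_2\mapsto\beta_2+b$ is a bijection of $\mathbb F_q^l$ that increases $\tau$ by $c$ and leaves the other variables fixed. It sends the first set onto the second.
\item If $\xi_2=0$ but $\alpha_2\neq0$, translate $\eta_2$ in the same way, choosing $e$ with $\alpha_2^te=c$.
\item If $\alpha_2=\xi_2=0$, which includes the case $l=0$ where these modules are zero, then $A=0$. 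Hence $\kappa=0$ and the two conditions coincide.
\end{itemize}
Summing over $(\alpha_2,\xi_2)$ gives the lemma.

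The only delicate point is Step 1. One must check that $\kappa$ really does not depend on the variables being translated. This holds because $\beta_1$ and $\eta_1$ enter $A$ only through products with $\widetilde\xi_1=0$ and $\widetilde\alpha_1=0$, and $\beta_2,\eta_2$ do not enter $A$ at all. The fixed choice of representatives, with $\widetilde 0=0$, is what makes this work.
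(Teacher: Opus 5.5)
Your proof is correct and follows essentially the same route as the paper. In both arguments you fix $(\alpha_2,\xi_2)$ with $\alpha_2^t\xi_2=s$, note that $A=\widetilde\alpha_2^t\widetilde\xi_2$ so the carry $\kappa$ is constant, settle $(\alpha_2,\xi_2)=(0,0)$ directly, and otherwise use that $(\beta_2,\eta_2)\mapsto\beta_2^t\xi_2+\alpha_2^t\eta_2$ is a nonzero linear functional. The only cosmetic difference is that the paper compares the two counts through the equal fiber sizes $q^{2l-1}$ of this functional, whereas you give an explicit translation bijection.
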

\begin{proof}
Since $\alpha_1=0$ and $\xi_1=0$, and since the representative of
$0\in\mathbb F_q$ is chosen to be $0\in R$, we have
\[
A=\widetilde{\alpha}_2^t\widetilde{\xi}_2, \qquad
\sigma = \alpha_2^t\xi_2.
\]

Fix $\alpha_2, \xi_2\in\mathbb F_q^l$ with $\alpha_2^t\xi_2=s$.
If $(\alpha_2, \xi_2)=(0,0)$, then necessarily $s=0$, and
$A=0$,
so $\kappa=0$. Hence the two conditions on $\tau$ are identical.
Now suppose $(\alpha_2, \xi_2)\neq (0,0)$. 
For fixed $\beta_1,\eta_1\in\mathbb F_q^k$,
we have
\[
\tau
=
\beta_1^t\eta_1+\beta_2^t\xi_2+\alpha_2^t\eta_2.
\]
As a function of
$(\beta_2,\eta_2)\in\mathbb F_q^l\times\mathbb F_q^l$,
the linear part
\[
(\beta_2,\eta_2)\longmapsto \beta_2^t\xi_2+\alpha_2^t\eta_2
\]
is nonzero, because $(\alpha_2, \xi_2)\neq (0,0)$. Therefore this map is a
surjective linear map onto $\mathbb F_q$. Hence, for every prescribed
value $t'\in\mathbb F_q$, the number of pairs
$(\beta_2,\eta_2)$
satisfying
\[
\beta_2^t\xi_2+\alpha_2^t\eta_2=t'
\]
is the same, namely $q^{2l-1}$.
Consequently, for fixed $\alpha_2,\xi_2,\beta_1,\eta_1$,
the number of choices of $(\beta_2,\eta_2)$ satisfying
$\tau=t-\kappa$
is equal to the number of choices satisfying
$\tau=t$.
Finally, $\beta_1,\eta_1$ are arbitrary, and the above comparison
holds for every fixed pair $\alpha_2, \xi_2\in\mathbb F_q^l$ with $\alpha_2^t\xi_2=s$.
Summing over all
such $(\alpha_2, \xi_2)$, and over all $\beta_1,\eta_1$, gives the desired
equality of cardinalities.
\end{proof}

In each lemma below, we use the type criteria from
Subsection~\ref{subsec:type-criteria}, translate the conditions on
$\rho'$ into conditions on $\sigma$ and $\kappa+\tau$, and then use
Lemma~\ref{lem:carry-shift-count}, when applicable, to replace
$\kappa+\tau$ by $\tau$ for counting purposes.
The elementary finite-field counts used below are collected in Appendix~A.

\begin{lemma}
\label{lem:N-ij-ij}
Assume that $(i,j)\in I_{d-1}$. Then
\begin{align}
N_{i,j}(0;i,j)
&=
q^{i+2j-1}
\left(
q^{i+2j-1}+q^{i+j}-q^{i+j-1}+q-1
\right), \label{eq:N-ij-ij-0}\\[4pt]
N_{i,j}(p;i,j)
&=
q^{i+2j-1}
\left(
q^{i+2j-1}+q^{i+j}-q^{i+j-1}-1
\right), \label{eq:N-ij-ij-p}\\[4pt]
N_{i,j}(1;i,j)
&=
q^{2i+3j-2}(q^j-1). \label{eq:N-ij-ij-1}
\end{align}
\end{lemma}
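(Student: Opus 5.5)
The plan is to read off, from the type criteria of Subsection~\ref{subsec:type-criteria} with $(k,l)=(i,j)$, which configurations of the reduced data $(\rho',\alpha_1,\xi_1,u_3,v_3)$ return type $(i,j)$ exactly. Then I would count the corresponding pairs $(u,v)$ using the parametrization by $(\alpha_1,\beta_1,\xi_1,\eta_1,\alpha_2,\beta_2,\xi_2,\eta_2,u_3,v_3)$, which is a bijection onto $R^{n-1}\times R^{d-1}$.

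\textbf{Isolating the configuration.} Going through Lemmas~\ref{lem:type-case-u3-v3-zero}--\ref{lem:type-case-u-nonp-v-nonp}, every case with $u_3\neq0$ or $v_3\neq0$ produces either a second index $\ge l+1$ or the type $(k+1,l)$ or $(k+2,l)$. Likewise, every case with $(\alpha_1,\xi_1)\neq(0,0)$ increases the second index. Hence type $(i,j)$ arises only when
\[
u_3=0,\qquad v_3=0,\qquad \alpha_1=0,\qquad \xi_1=0,\qquad \rho'=0.
\]
Since $\rho'=\rho-\widetilde\sigma-p(\widetilde{\kappa+\tau})$, the condition $\rho'=0$ fixes both $\sigma$ and $\kappa+\tau$. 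Writing $\rho=\widetilde{s}+p\widetilde{t}$, and using that the representatives of $0$ and $1$ are $0$ and $1$, we get $(s,t)=(0,0)$ for $\rho=0$, $(s,t)=(0,1)$ for $\rho=p$, and $(s,t)=(1,0)$ for $\rho=1$. By Lemma~\ref{lem:carry-shift-count} I may replace $\kappa+\tau$ by $\tau$. So $N_{i,j}(\rho;i,j)$ equals the number of tuples $(\beta_1,\eta_1)\in(\mathbb F_q^i)^2$ and $(\alpha_2,\beta_2,\xi_2,\eta_2)\in(\mathbb F_q^j)^4$ with
\[
\alpha_2^t\xi_2=s,\qquad \beta_1^t\eta_1+\beta_2^t\xi_2+\alpha_2^t\eta_2=t.
\]

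\textbf{Counting.} I would split according to whether $(\alpha_2,\xi_2)=(0,0)$.
\begin{itemize}
\item If $(\alpha_2,\xi_2)\neq(0,0)$, the map $(\beta_2,\eta_2)\mapsto\beta_2^t\xi_2+\alpha_2^t\eta_2$ is a nonzero linear form. So for each such pair and each $(\beta_1,\eta_1)$ there are $q^{2j-1}$ solutions, giving $q^{2i}q^{2j-1}$ per pair $(\alpha_2,\xi_2)$.
\item If $(\alpha_2,\xi_2)=(0,0)$, which occurs only when $s=0$, we need $\beta_1^t\eta_1=t$, with $\beta_2,\eta_2$ free (a factor $q^{2j}$).
\end{itemize}
I would then use the appendix counts for the number $Z_m(c)$ of pairs in $\mathbb F_q^m$ with dot product $c$:
\[
Z_m(0)=q^{2m-1}+q^m-q^{m-1},\qquad Z_m(c)=q^{2m-1}-q^{m-1}\quad(c\ne0).
\]
This gives the following formulas.
\begin{itemize}
\item For $\rho=1$: $Z_j(1)\,q^{2i+2j-1}=q^{2i+3j-2}(q^j-1)$, which is \eqref{eq:N-ij-ij-1}.
\item For $\rho=0$: $(Z_j(0)-1)q^{2i+2j-1}+Z_i(0)q^{2j}$.
\item For $\rho=p$: $(Z_j(0)-1)q^{2i+2j-1}+Z_i(1)q^{2j}$.
\end{itemize}
Factoring out $q^{i+2j-1}$ should yield \eqref{eq:N-ij-ij-0} and \eqref{eq:N-ij-ij-p}. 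For example, for $\rho=p$ the bracket should become $q^{i+2j-1}+q^{i+j}-q^{i+j-1}-q^{i}+q^{i}-1$ after simplification.

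\textbf{Main obstacle.} The main obstacle is the first step: verifying rigorously that no other case in the lemmas lands on $(i,j)$ from $(k,l)=(i,j)$. This needs attention to boundary cases where $k=0$ or $N,D=0$, in which some cases are void but none adds a new possibility. A secondary check is that the degenerate cases $i=0$ or $j=0$ agree with the formulas under the stated convention on apparent negative powers.
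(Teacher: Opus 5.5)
Your proposal is correct and follows essentially the same route as the paper. Both isolate the configuration $\rho'=0$, $\alpha_1=\xi_1=0$, $u_3=v_3=0$, translate $\rho'=0$ into $\sigma=s$, $\kappa+\tau=t$, and use Lemma~\ref{lem:carry-shift-count} to replace $\kappa+\tau$ by $\tau$. The only difference is that you do the finite-field count inline, splitting on whether $(\alpha_2,\xi_2)=(0,0)$, while the paper cites Lemmas~\ref{lem:sixtuple-zero-constrained-count}, \ref{lem:sixtuple-zero-cross-fixed-nonzero-sum-count} and \ref{lem:sixtuple-nonzero-cross-zero-total-count}, whose proofs (apart from a complement count in the first) use the same split; your arithmetic gives the stated formulas in all three cases, including the degenerate cases $i=0$ or $j=0$.
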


\begin{proof}
In the case $(k,l)=(i,j)$, the type criteria in
Subsection~\ref{subsec:type-criteria} show that the reduced block
matrix has type $(i,j)$ if and only if
\[
\rho'=0,\qquad
\alpha_1=0,\qquad
\xi_1=0,\qquad
u_3=0,\qquad
v_3=0.
\]
Under the conditions
$\alpha_1=\xi_1=0$, we have
\[
\sigma=\alpha_2^t\xi_2,
\qquad
\tau=
\beta_1^t\eta_1+\beta_2^t\xi_2+\alpha_2^t\eta_2.
\]
If $\rho=0$, then $\rho'=0$ is equivalent to
$\sigma=0$ and $\kappa+\tau=0$.
By Lemma~\ref{lem:carry-shift-count}, the number of such tuples is
equal to the number of tuples satisfying
$\sigma=0$ and $\tau=0$.
By Lemma~\ref{lem:sixtuple-zero-constrained-count}, applied with $k=i$ and $l=j$, this gives \eqref{eq:N-ij-ij-0}.

If $\rho=p$, then $\rho'=0$ is equivalent to
$\sigma=0$ and $\kappa+\tau=1$.
By Lemma~\ref{lem:carry-shift-count}, the number of such tuples is
equal to the number of tuples satisfying
$\sigma=0$ and $\tau=1$.
By Lemma~\ref{lem:sixtuple-zero-cross-fixed-nonzero-sum-count}, applied
with $k=i$ and $l=j$, this gives
\eqref{eq:N-ij-ij-p}.

If $\rho=1$, then $\rho'=0$ is equivalent to
$\sigma=1$ and $\kappa+\tau=0$.
By Lemma~\ref{lem:carry-shift-count}, the number of such tuples is
equal to the number of tuples satisfying
$\sigma=1$ and $\tau=0$.
By Lemma~\ref{lem:sixtuple-nonzero-cross-zero-total-count}, applied
with $k=i$ and $l=j$, this gives
\eqref{eq:N-ij-ij-1}.
\end{proof}

\begin{lemma}
\label{lem:N-ij-i-1-j}
Assume that $(i-1,j)\in I_{d-1}$. Then
\begin{align}
N_{i,j}(0;i-1,j)
&=
q^{2i+3j-4}
\left(q^j+q-1\right)
\left(q^{n-i-j+1}+q^{d-i-j+1}-q-1\right)
- q^{i+2j-2}(q-1), \label{eq:N-i-1-j-0}\\[4pt]
N_{i,j}(p;i-1,j)
&=
q^{2i+3j-4}
\left(q^j+q-1\right)
\left(q^{n-i-j+1}+q^{d-i-j+1}-q-1\right)
+ q^{i+2j-2}, \label{eq:N-i-1-j-p}\\[4pt]
N_{i,j}(1;i-1,j)
&=
q^{2i+3j-4}
\left(q^j-1\right)
\left(q^{n-i-j+1}+q^{d-i-j+1}-q-1\right).
\label{eq:N-i-1-j-1}
\end{align}
\end{lemma}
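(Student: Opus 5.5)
The plan is to fix $(k,l)=(i-1,j)$ and read off from Lemmas~\ref{lem:type-case-u3-v3-zero}--\ref{lem:type-case-u-nonp-v-nonp} every configuration in which the reduced matrix $M$ has type $(k+1,l)$. Going through the nine $(u_3,v_3)$ cases, this type occurs only when $\alpha_1=0$ and $\xi_1=0$, and then in exactly three situations:
\begin{itemize}
\item[(a)] $u_3=0$, $v_3=0$ and $\rho'\in pR\setminus\{0\}$ (Lemma~\ref{lem:type-case-u3-v3-zero});
\item[(b)] $u_3\in(pR)^N\setminus\{0\}$, $v_3=0$ and $\rho'\in pR$ (Lemma~\ref{lem:type-case-u-p-nonzero-v-zero});
\item[(c)] $u_3=0$, $v_3\in(pR)^D\setminus\{0\}$ and $\rho'\in pR$ (Lemma~\ref{lem:type-case-u-zero-v-p-nonzero}).
\end{itemize}
I would check that no other case yields this type. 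The remaining candidates either yield $(k+2,l)$ or $(k+1,l+1)$, or require $\xi_1$ or $\alpha_1$ to be nonzero, which changes $l$.

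Here $N=n-i-j$ and $D=d-i-j$. The numbers of choices for $u_3$ and $v_3$ are then:
\begin{itemize}
\item[(a)] one choice;
\item[(b)] $q^{N}-1$ choices of $u_3$;
\item[(c)] $q^{D}-1$ choices of $v_3$.
\end{itemize}
Since $\alpha_1=\xi_1=0$, the remaining variables are $\beta_1,\eta_1\in\mathbb F_q^{i-1}$ and $\alpha_2,\beta_2,\xi_2,\eta_2\in\mathbb F_q^{j}$. The condition $\rho'\in pR$ is simply $\sigma=\alpha_2^t\xi_2=\bar\rho$, the residue of $\rho$. Write $T(\bar\rho)$ for the number of tuples with $\sigma=\bar\rho$, and $Z(\rho)$ for the number with $\rho'=0$. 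Then
\[
N_{i,j}(\rho;i-1,j)=T(\bar\rho)\,(q^{N}+q^{D}-1)-Z(\rho).
\]

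For the first factor, $\beta_1,\eta_1,\beta_2,\eta_2$ are free, contributing $q^{2(i-1)+2j}$. The pairs $(\alpha_2,\xi_2)$ with $\alpha_2^t\xi_2=s$ are counted by the appendix formulas:
\begin{itemize}
\item[] $q^{2j-1}+q^{j}-q^{j-1}$ pairs when $s=0$;
\item[] $q^{2j-1}-q^{j-1}$ pairs when $s\neq0$.
\end{itemize}
These give $T(0)=q^{2i+3j-3}(q^j+q-1)$ and $T(1)=q^{2i+3j-3}(q^j-1)$.

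For the subtracted term, $Z(\rho)$ is exactly the count already computed in Lemma~\ref{lem:N-ij-ij}, taken with $(i,j)$ replaced by $(i-1,j)$. That lemma applies since $(i-1,j)\in I_{d-1}$, and it already handles the carry $\kappa$ via Lemma~\ref{lem:carry-shift-count}. For $\rho=1$ there is nothing to subtract: $\rho'\notin pR$ fails only when $\sigma=1$, and then $\rho'=0$ still needs $\kappa+\tau=0$. So for $\rho=1$ I would instead recount directly: case (a) requires $\rho'\neq0$, so it contributes $T(1)$ minus the Lemma~\ref{lem:N-ij-ij} value \eqref{eq:N-ij-ij-1}, while cases (b) and (c) contribute $T(1)$ times their numbers of choices.

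The last step is algebraic simplification to the stated form. One writes $q^{N}+q^{D}-1=q^{-1}(q^{N+1}+q^{D+1}-q-1)+q^{-1}$. The leftover $q^{-1}T$ term then combines with $-Z$ to produce the corrections $-q^{i+2j-2}(q-1)$, $+q^{i+2j-2}$ and $0$ respectively. For example, when $\rho=0$ one checks that $q^{2i+3j-4}(q^j+q-1)-q^{i+2j-3}(q^{i+2j-3}+q^{i+j-1}-q^{i+j-2}+q-1)=-q^{i+2j-2}(q-1)$.

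I expect the main obstacle to be the exhaustive case check in the first paragraph: making sure no configuration with $\alpha_1\neq0$ or $\xi_1\neq0$, or with a unit entry in $u_3$ or $v_3$, produces type $(i,j)$ from $(i-1,j)$. A secondary risk is keeping the carry term $\kappa$ correctly handled in the subtracted $\rho'=0$ count. This is settled by reusing Lemma~\ref{lem:N-ij-ij} rather than recounting.
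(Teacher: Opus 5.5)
Your proof is correct. Its first step coincides with the paper's: for $(k,l)=(i-1,j)$ the type criteria force $\alpha_1=\xi_1=0$, and your three configurations are exactly the ones the paper uses.

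The difference is in how the count is organized.
\begin{itemize}
\item The paper regroups the configurations by the value of $\rho'$. It takes $\rho'=0$ with exactly one of $u_3,v_3$ nonzero (factor $q^N+q^D-2$), and $\rho'\in pR\setminus\{0\}$ with $u_3=0$ or $v_3=0$ (factor $q^N+q^D-1$). It then evaluates six separate $(\sigma,\kappa+\tau)$-constrained counts via Lemma~\ref{lem:carry-shift-count} and the sixtuple lemmas of the appendix, including the complementary ones with $\tau\neq0$ and $\tau\neq1$.
\item You write the total as $T(\bar\rho)(q^N+q^D-1)-Z(\rho)$. Since $\rho'\in pR$ depends only on $\sigma=\alpha_2^t\xi_2$, the factor $T$ needs only the one-block inner-product counts and no carry analysis. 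The carry enters only through $Z(\rho)=N_{i-1,j}(\rho;i-1,j)$, which Lemma~\ref{lem:N-ij-ij} already supplies.
\end{itemize}
Your route is shorter. It also explains why the three formulas share the main term $q^{-1}T(\bar\rho)(q^{N+1}+q^{D+1}-q-1)$ and differ only by $q^{-1}T(\bar\rho)-Z(\rho)$. The paper's version is more self-contained case by case, but it needs more appendix lemmas.

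Two slips should be corrected.
\begin{itemize}
\item In your sample check for $\rho=0$, Lemma~\ref{lem:N-ij-ij} with $i\mapsto i-1$ gives $Z(0)=q^{i+2j-2}(q^{i+2j-2}+q^{i+j-1}-q^{i+j-2}+q-1)$, not $q^{i+2j-3}(q^{i+2j-3}+\cdots)$. With the exponents you wrote, the displayed identity is false. With the correct ones it does reduce to $-q^{i+2j-2}(q-1)$.
\item Your remark that for $\rho=1$ there is nothing to subtract is wrong. The direct recount you then describe is again $T(1)(q^N+q^D-1)-Z(1)$, with $Z(1)=q^{2i+3j-4}(q^j-1)$. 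It is precisely the equality $q^{-1}T(1)=Z(1)$ that makes the correction vanish, so $\rho=1$ needs no separate treatment.
\end{itemize}
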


\begin{proof}
Put $N=n-i-j$ and $D=d-i-j$.
We consider the case $(k,l)=(i-1,j)$. By the type criteria in
Subsection~\ref{subsec:type-criteria}, the reduced block matrix has
type $(i,j)$ if and only if 
\[
\alpha_1=0,\qquad \xi_1=0,
\]
and one of the following two disjoint conditions holds:
\[
\begin{array}{ll}
\textup{(a)}&
\rho'=0,\qquad
(u_3,v_3)\in(pR)^N\times(pR)^D,
\qquad
\text{exactly one of $u_3$ and $v_3$ is zero},\\[2mm]
\textup{(b)}&
\rho'\in pR\setminus\{0\},\qquad
(u_3,v_3)\in(pR)^N\times(pR)^D,
\qquad
u_3=0\text{ or }v_3=0.
\end{array}
\]
Under the conditions $\alpha_1=\xi_1=0$, we have
\[
\sigma =\alpha_2^t\xi_2,
\qquad
\tau =\beta_1^t\eta_1+\beta_2^t\xi_2+\alpha_2^t\eta_2.
\]

First suppose that $\rho=0$. Then
\[
\rho'=0
\quad\Longleftrightarrow\quad
\sigma=0,\ \kappa+\tau=0,
\qquad
\rho'\in pR\setminus\{0\}
\quad\Longleftrightarrow\quad
\sigma=0,\ \kappa+\tau\neq0.
\]
By Lemma~\ref{lem:carry-shift-count}, the number of tuples satisfying
$\sigma=0$ and $\kappa+\tau=0$ is equal to the number of tuples satisfying
$\sigma=0$ and $\tau=0$; similarly, the number of tuples satisfying
$\sigma=0$ and $\kappa+\tau\neq0$ is equal to the number of tuples satisfying
$\sigma=0$ and $\tau\neq0$.

For the case (a), by Lemma~\ref{lem:sixtuple-zero-constrained-count}, applied with
$k=i-1$ and $l=j$, the number of choices satisfying $\sigma=0$ and $\tau=0$
is
\[
q^{i+2j-2}
\left(
q^{i+2j-2}+q^{i+j-1}-q^{i+j-2}+q-1
\right).
\]
For this contribution, exactly one of $u_3=0$ and $v_3=0$ must
hold. Since $u_3\in(pR)^N$ and $v_3\in(pR)^D$, the number of such
pairs is $q^N+q^D-2$.
Hence the first contribution is
\[
q^{i+2j-2}
\left(
q^{i+2j-2}+q^{i+j-1}-q^{i+j-2}+q-1
\right)
\left(q^N+q^D-2\right).
\]
For the case (b), by Lemma~\ref{lem:sixtuple-zero-cross-nonzero-sum-count},
applied with $k=i-1$ and $l=j$, the number of choices satisfying $\sigma=0$ and $\tau \neq 0$
is
\[
q^{i+2j-2}(q-1)
\left(
q^{i+2j-2}+q^{i+j-1}-q^{i+j-2}-1
\right).
\]
In case (b), at least one of $u_3$ and $v_3$ must be zero, giving $q^N+q^D-1$ choices.
Hence the second contribution is
\[
q^{i+2j-2}(q-1)
\left(
q^{i+2j-2}+q^{i+j-1}-q^{i+j-2}-1
\right)
\left(q^N+q^D-1\right).
\]
Adding the two contributions and simplifying, we obtain
\[
N_{i,j}(0;i-1,j)
=
q^{2i+3j-4}
\left(q^j+q-1\right)
\left(q^{N+1}+q^{D+1}-q-1\right)
- q^{i+2j-2}(q-1).
\]
Substituting $N=n-i-j$ and $D=d-i-j$ gives \eqref{eq:N-i-1-j-0}.

Next suppose that $\rho=p$. Then
\[
\rho'=0
\quad\Longleftrightarrow\quad
\sigma=0,\ \kappa+\tau=1,
\qquad
\rho'\in pR\setminus\{0\}
\quad\Longleftrightarrow\quad
\sigma=0,\ \kappa+\tau\neq 1.
\]
By Lemma~\ref{lem:carry-shift-count}, the number of tuples satisfying
$\sigma=0$ and $\kappa+\tau=1$ is equal to the number of tuples satisfying
$\sigma=0$ and $\tau=1$; similarly, the number of tuples satisfying
$\sigma=0$ and $\kappa+\tau\neq 1$ is equal to the number of tuples satisfying
$\sigma=0$ and $\tau\neq 1$.

For the case (a), by Lemma~\ref{lem:sixtuple-zero-cross-fixed-nonzero-sum-count},
applied with $k=i-1$ and $l=j$, the number of choices satisfying $\sigma=0$ and $\tau=1$
is
\[
q^{i+2j-2}
\left(
q^{i+2j-2}+q^{i+j-1}-q^{i+j-2}-1
\right).
\]
For this contribution, exactly one of $u_3=0$ and $v_3=0$ must
hold, giving $q^N+q^D-2$
choices for $(u_3,v_3)$. Hence the first contribution is
\[
q^{i+2j-2}
\left(
q^{i+2j-2}+q^{i+j-1}-q^{i+j-2}-1
\right)
\left(q^N+q^D-2\right).
\]
For the case (b), by Lemma~\ref{lem:sixtuple-zero-cross-not-fixed-nonzero-sum-count},
applied with $k=i-1$ and $l=j$, the number of choices satisfying $\sigma=0$ and $\tau \neq 1$
is
\[
q^{i+2j-2}
\left(
q^{i+2j-1}-q^{i+2j-2}
+q^{i+j}-2q^{i+j-1}+q^{i+j-2}+1
\right).
\]
For this contribution, we require $u_3 = 0$ or $v_3 = 0$,
which gives $q^N+q^D-1$
choices for $(u_3,v_3)$. Hence the second contribution is
\[
q^{i+2j-2}
\left(
q^{i+2j-1}-q^{i+2j-2}
+q^{i+j}-2q^{i+j-1}+q^{i+j-2}+1
\right)
\left(q^N+q^D-1\right).
\]
Adding the two contributions and simplifying, we obtain
\[
N_{i,j}(p;i-1,j)
=
q^{2i+3j-4}
\left(q^j+q-1\right)
\left(q^{N+1}+q^{D+1}-q-1\right)
+q^{i+2j-2}.
\]
Substituting $N=n-i-j$ and $D=d-i-j$ gives \eqref{eq:N-i-1-j-p}.

Finally suppose that $\rho=1$. Then
\[
\rho'=0
\quad\Longleftrightarrow\quad
\sigma=1,\ \kappa+\tau=0,
\qquad
\rho'\in pR\setminus\{0\}
\quad\Longleftrightarrow\quad
\sigma=1,\ \kappa+\tau\neq 0.
\]
By Lemma~\ref{lem:carry-shift-count}, the number of tuples satisfying
$\sigma=1$ and $\kappa+\tau=0$ is equal to the number of tuples satisfying
$\sigma=1$ and $\tau=0$; similarly, the number of tuples satisfying
$\sigma=1$ and $\kappa+\tau\neq 0$ is equal to the number of tuples satisfying
$\sigma=1$ and $\tau\neq 0$.

For the case (a), by Lemma~\ref{lem:sixtuple-nonzero-cross-zero-total-count}, applied
with $k=i-1$ and $l=j$, the number of choices satisfying $\sigma=1$ and $\tau=0$
is
\[
q^{2i+3j-4}(q^j-1).
\]
For this contribution, exactly one of $u_3=0$ and $v_3=0$ must
hold, giving $q^N+q^D-2$
choices for $(u_3,v_3)$. Hence the first contribution is
\[
q^{2i+3j-4}(q^j-1)(q^N+q^D-2).
\]
For the case (b), by Lemma~\ref{lem:sixtuple-nonzero-cross-nonzero-sum-count},
applied with $k=i-1$ and $l=j$, the number of choices satisfying $\sigma=1$ and $\tau\neq 0$
is
\[
q^{2i+3j-4}(q-1)(q^j-1).
\]
For this contribution, we require $u_3=0$ or $v_3=0$,
which gives $q^N+q^D-1$
choices for $(u_3,v_3)$. Hence the second contribution is
\[
q^{2i+3j-4}(q-1)(q^j-1)(q^N+q^D-1).
\]
Adding the two contributions and simplifying, we obtain
\[
N_{i,j}(1;i-1,j)
=
q^{2i+3j-4}(q^j-1)
\left(q^{N+1}+q^{D+1}-q-1\right).
\]
Substituting $N=n-i-j$ and $D=d-i-j$ gives \eqref{eq:N-i-1-j-1}.
\end{proof}

\begin{lemma}
\label{lem:N-ij-i-2-j}
Assume that $(i-2,j)\in I_{d-1}$. Then
\begin{align}
N_{i,j}(\rho;i-2,j)
&=
q^{2i+3j-5}
(q^j+q-1)
(q^{n-i-j+1}-1)
(q^{d-i-j+1}-1),
\qquad \rho\in\{0,p\}, \label{eq:N-i-2-j-0p}\\[4pt]
N_{i,j}(1;i-2,j)
&=
q^{2i+3j-5}
(q^j-1)
(q^{n-i-j+1}-1)
(q^{d-i-j+1}-1).
\label{eq:N-i-2-j-1}
\end{align}
\end{lemma}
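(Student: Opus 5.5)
For the case $(k,l)=(i-2,j)$ I would follow the pattern of Lemmas~\ref{lem:N-ij-ij} and~\ref{lem:N-ij-i-1-j}. First I read off from Lemmas~\ref{lem:type-case-u3-v3-zero}--\ref{lem:type-case-u-nonp-v-nonp} exactly when the reduced block matrix $M$ has type $(k+2,l)$. Scanning all nine $(u_3,v_3)$ cases, the type $(k+2,l)$ occurs in only one place: Lemma~\ref{lem:type-case-u-p-nonzero-v-p-nonzero}(1). So $M$ has type $(i,j)$ if and only if
\[
\alpha_1=0,\qquad \xi_1=0,\qquad \rho'\in pR,\qquad u_3\in(pR)^N\setminus\{0\},\qquad v_3\in(pR)^D\setminus\{0\}.
\]
Here $N=n-k-l-1=n-i-j+1$ and $D=d-i-j+1$. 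No other case yields $(k+2,l)$: cases with a unit in $u_3$ or $v_3$ always increase $l$, and the remaining cases shift $k$ by at most $1$. I will verify this by a quick check of each lemma.

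The conditions on $u_3,v_3$ are independent of the finite-field variables. Since $pR\cong\mathbb F_q$ additively, they contribute a factor $(q^{N}-1)(q^{D}-1)=(q^{n-i-j+1}-1)(q^{d-i-j+1}-1)$.

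Next I translate $\rho'\in pR$. Since $\rho'=\rho-\widetilde\sigma-p(\widetilde{\kappa+\tau})$, the condition $\rho'\in pR$ is equivalent to $\sigma\equiv\rho\pmod p$. That is, $\sigma=0$ for $\rho\in\{0,p\}$, and $\sigma=1$ for $\rho=1$. With $\alpha_1=\xi_1=0$ we have $\sigma=\alpha_2^t\xi_2$. Crucially, $\tau$ and $\kappa$ are now unconstrained, so Lemma~\ref{lem:carry-shift-count} is not even needed, and the vectors $\beta_1,\eta_1\in\mathbb F_q^{i-2}$ and $\beta_2,\eta_2\in\mathbb F_q^{j}$ are free. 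They contribute $q^{2(i-2)+2j}$.

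The remaining count is the number of pairs $(\alpha_2,\xi_2)\in(\mathbb F_q^{j})^2$ with $\alpha_2^t\xi_2=s$. This is the standard count from Appendix~A: $q^{j-1}(q^j+q-1)$ for $s=0$ (split according to $\alpha_2=0$ or not), and $q^{j-1}(q^j-1)$ for $s\neq0$. Multiplying the factors gives the prefactor $q^{2i-4+2j+j-1}=q^{2i+3j-5}$, and hence \eqref{eq:N-i-2-j-0p} and \eqref{eq:N-i-2-j-1}.

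The only delicate point is the completeness of the case scan, i.e.\ certifying that $(k+2,l)$ arises nowhere else. I would also check the boundary conventions: if $j=0$, the factor for $(\alpha_2,\xi_2)$ correctly gives $1$ for $s=0$ and $0$ for $s\neq0$, after the simplification convention stated at the beginning of this subsection.
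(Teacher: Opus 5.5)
Your proposal is correct and follows essentially the same route as the paper. Both read the type criterion off the single case Lemma~\ref{lem:type-case-u-p-nonzero-v-p-nonzero}(1), reduce $\rho'\in pR$ to $\sigma=\alpha_2^t\xi_2\equiv\rho \pmod p$ with $\beta_1,\eta_1,\beta_2,\eta_2$ free, and multiply the Appendix~A counts $q^{j-1}(q^j+q-1)$ or $q^{j-1}(q^j-1)$ by $q^{2i+2j-4}$ and $(q^N-1)(q^D-1)$. Your explicit check of the case scan and of the $j=0$ boundary are small additions the paper leaves implicit.
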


\begin{proof}
Put
$N=n-i-j+1$ and $D=d-i-j+1$.
We consider the case $(k,l)=(i-2,j)$. By the type criteria in
Subsection~\ref{subsec:type-criteria}, the reduced block matrix has
type $(i,j)$ if and only if
\[
\rho'\in pR,\qquad
\alpha_1=0,\qquad
\xi_1=0,\qquad
u_3\in(pR)^N\setminus\{0\},
\qquad
v_3\in(pR)^D\setminus\{0\}.
\]
Under the conditions $\alpha_1=\xi_1=0$, we have
\[
\sigma=\alpha_2^t\xi_2.
\]

First let $\rho\in\{0,p\}$.
Since both values reduce to $0$ modulo $p$, the condition $\rho'\in pR$ is equivalent to
$\sigma=0$.
The variables
$\beta_1,\eta_1\in\mathbb{F}_q^{i-2}$,
$\beta_2,\eta_2\in\mathbb{F}_q^j$
are unrestricted.
By Lemma~\ref{lem:zero-inner-product-count}, applied in dimension $j$,
the number of choices satisfying $\sigma=0$ is
\[
q^{2i+2j-4}\cdot q^{j-1}(q^j+q-1)
=
q^{2i+3j-5}(q^j+q-1).
\]
There are $(q^N-1)(q^D-1)$ choices for $(u_3,v_3)$.
Thus
\[
N_{i,j}(\rho;i-2,j)
=
q^{2i+3j-5}(q^j+q-1)(q^N-1)(q^D-1),
\qquad \rho\in\{0,p\}.
\]
Substituting $N=n-i-j+1$, $D=d-i-j+1$
gives \eqref{eq:N-i-2-j-0p}.

Next suppose that $\rho=1$. Since $\rho$ reduces to $1$ modulo $p$,
the condition $\rho'\in pR$ is equivalent to $\sigma=1$.
Again, the variables
$\beta_1,\eta_1,\beta_2,\eta_2$
are unrestricted.
By Lemma~\ref{lem:nonzero-inner-product-count}, applied in dimension $j$, the
number of choices of the finite-field variables satisfying $\sigma=1$
is
\[
q^{2i+2j-4}\cdot q^{j-1}(q^j-1)
=
q^{2i+3j-5}(q^j-1).
\]
The choices of $u_3$ and $v_3$ are again $(q^N-1)(q^D-1)$.
Therefore
\[
N_{i,j}(1;i-2,j)
=
q^{2i+3j-5}(q^j-1)(q^N-1)(q^D-1),
\]
which gives \eqref{eq:N-i-2-j-1}.
\end{proof}

\begin{lemma}
\label{lem:N-ij-i-j-1}
Assume that $(i,j-1)\in I_{d-1}$. Then
\begin{align}
N_{i,j}(\rho;i,j-1)
&=
q^{2i+4j-5}(q^i-1)(2q^{n+d-2i-2j} - q^{n-i-j} - q^{d-i-j}) \notag\\
&\quad
+ q^{3i+4j-4}\left( q^{n-i-j}(q^{n-i-j} - 1) + q^{d-i-j}(q^{d-i-j}-1)\right)\notag\\
&\quad
+ q^{n+d+j-4}(q-1)(2q^{i+j-1}-q^{j-1}-1),
\qquad \rho\in\{0,p\}, \label{eq:N-i-j-1-0p}\\[4pt]
N_{i,j}(1;i,j-1)
&=
q^{2i+4j-5}(q^i-1)(2q^{n+d-2i-2j} - q^{n-i-j} - q^{d-i-j}) \notag\\
&\quad
+ q^{3i+4j-4}\left( q^{n-i-j}(q^{n-i-j} - 1) + q^{d-i-j}(q^{d-i-j}-1)\right)\notag\\
&\quad
+ q^{n+d+j-4}(2q^{i+j} - 2q^{i+j-1}-q^j+q^{j-1}+1).
\label{eq:N-i-j-1-1}
\end{align}
\end{lemma}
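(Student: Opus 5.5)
The plan is to follow the same scheme as in Lemmas~\ref{lem:N-ij-ij}--\ref{lem:N-ij-i-2-j}. Set $(k,l)=(i,j-1)$, so that $N=n-k-l-1=n-i-j$ and $D=d-i-j$. Going through Lemmas~\ref{lem:type-case-u3-v3-zero}--\ref{lem:type-case-u-nonp-v-nonp}, I will list every configuration giving $\operatorname{type}(M)=(k,l+1)=(i,j)$. Each $u_3$ and $v_3$ is sorted into the classes ``zero'', ``nonzero in $pR$'' and ``not in $pR$''. The configurations are:
\begin{itemize}
\item $u_3=v_3=0$, with either $\alpha_1=\xi_1=0$ and $\rho'\notin pR$, or exactly one of $\alpha_1,\xi_1$ nonzero and $\rho'\notin pR$;
\item $u_3\in(pR)^N\setminus\{0\}$, $v_3=0$, with either $\alpha_1=\xi_1=0$ and $\rho'\notin pR$, or $\alpha_1=0\neq\xi_1$ and $\rho'$ arbitrary, or $\alpha_1\neq0=\xi_1$ and $\rho'\notin pR$;
\item the transposed situation with $u_3=0$ and $v_3\in(pR)^D\setminus\{0\}$;
\item both $u_3,v_3$ $p$-nonzero, with either $\alpha_1=\xi_1=0$ and $\rho'\notin pR$, or exactly one of $\alpha_1,\xi_1$ nonzero and $\rho'$ arbitrary;
\item $u_3\notin(pR)^N$, $v_3=0$, $\xi_1=0$, with $\alpha_1,\rho'$ arbitrary;
\item the transposed case $v_3\notin(pR)^D$, $u_3=0$, $\alpha_1=0$.
\end{itemize}
All other cases yield types different from $(i,j)$.

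The key observation is that only the condition $\rho'\in pR$ versus $\rho'\notin pR$ ever enters here, never $\rho'=0$. This condition is equivalent to $\sigma\equiv\rho\pmod p$. Hence $\kappa$ and $\tau$ play no role, Lemma~\ref{lem:carry-shift-count} is not needed, and the cases $\rho=0$ and $\rho=p$ give identical counts. This explains the common formula \eqref{eq:N-i-j-1-0p}. For $\rho\in\{0,p\}$ the condition reads $\sigma\neq0$, and for $\rho=1$ it reads $\sigma\neq1$.

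For the counting I split according to $(\alpha_1,\xi_1)$. If $\alpha_1=\xi_1=0$, then $\sigma=\alpha_2^t\xi_2$ in dimension $l=j-1$. The counts of $\sigma\neq0$ and of $\sigma\neq1$ then follow from Lemmas~\ref{lem:zero-inner-product-count} and~\ref{lem:nonzero-inner-product-count} by complement, with $\beta_1,\eta_1,\beta_2,\eta_2$ free. If $\alpha_1\ne0$ or $\xi_1\neq0$, then $\sigma$ is a nonconstant affine function of $(\beta_1,\eta_1)$. So exactly a fraction $(q-1)/q$ of the free variables satisfy $\sigma\not\equiv\rho$, for either value of $\rho$. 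The number of nonzero $\alpha_1$ (or $\xi_1$) is $q^i-1$. The counts for $(u_3,v_3)$ are $1$, $q^N-1$, $q^D-1$, and $q^{2N}-q^N$ (respectively $q^{2D}-q^D$) for the non-$p$ classes.

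Multiplying and summing the contributions, I expect the following matches with the stated formula. The ``$\xi_1\neq0$ or $\alpha_1\neq0$'' terms with $p$-nonzero $u_3,v_3$ assemble into $q^{2i+4j-5}(q^i-1)(2q^{N+D}-q^N-q^D)$. The non-$p$ classes, with $\alpha_1$ or $\xi_1$ free, give $q^{3i+4j-4}\bigl(q^N(q^N-1)+q^D(q^D-1)\bigr)$. The remaining $\alpha_1=\xi_1=0$ and $\rho'\notin pR$ pieces produce the last term, which is the only one depending on $\rho \bmod p$.

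The main obstacle is the bookkeeping. I must make sure each configuration is counted exactly once, in particular the overlap between ``exactly one nonzero'' cases and the transposition symmetry. I must also handle the boundary dimensions $k=0$, $l=0$, $N=0$, $D=0$, where some configurations are void. Finally, the algebraic simplification into the three displayed summands needs care; I would verify it first on the $\mathbb{Z}_4$, $d=n=2$ data of Example~\ref{ex:mat22-z4} as a check.
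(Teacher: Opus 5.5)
Your enumeration of the configurations that give type $(i,j)$ from $(k,l)=(i,j-1)$ is correct, and so is your counting method. Only $\rho'\in pR$ versus $\rho'\notin pR$ occurs, which means $\sigma=c_\rho$ versus $\sigma\neq c_\rho$, with $c_\rho=0$ for $\rho\in\{0,p\}$ and $c_\rho=1$ for $\rho=1$. Equidistribution of a nonconstant affine functional in $(\beta_1,\eta_1)$ is a valid substitute for the appendix counts the paper quotes. This is essentially the paper's proof. The paper only groups the same configurations differently, into five disjoint cases (a)--(e), chosen so that (b)+(c), (d)+(e) and (a) produce the three displayed summands respectively.

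What is wrong is your forecast of how the pieces line up with those summands; matched as you describe, the first and third summands will not come out.

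\textbf{First summand.} Consider your ``$\rho'$ arbitrary'' pieces with exactly one of $\alpha_1,\xi_1$ nonzero, for example $\alpha_1=0\neq\xi_1$, $u_3\in(pR)^N\setminus\{0\}$, $v_3\in(pR)^D$, and its transpose. Each has $q^{2i+4j-4}(q^i-1)$ finite-field choices. Together they give $q^{2i+4j-4}(q^i-1)(2q^{N+D}-q^N-q^D)$, which is $q$ times the first summand.

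\textbf{Third summand.} For $\rho\in\{0,p\}$, the pieces with $\alpha_1=\xi_1=0$ and $\rho'\notin pR$ give only $q^{n+d+j-4}(q-1)(q^{j-1}-1)$, not the third summand.

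\textbf{Fix.} Split the ``$\rho'$ arbitrary'' pieces according to whether $\sigma=c_\rho$ or $\sigma\neq c_\rho$.
\begin{itemize}
\item The part with $\sigma=c_\rho$ gives exactly the first summand.
\item The part with $\sigma\neq c_\rho$, together with all your $\rho'\notin pR$ pieces, reassembles into a single set: $\rho'\notin pR$, $\alpha_1=0$ or $\xi_1=0$, and $(u_3,v_3)\in(pR)^N\times(pR)^D$. This set has $q^{N+D}$ choices of $(u_3,v_3)$; it is the paper's case (a), and it yields the third summand.
\end{itemize}
For $\rho\in\{0,p\}$, this is visible in the identity $2q^{i+j-1}-q^{j-1}-1=(q^{j-1}-1)+2q^{j-1}(q^i-1)$. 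The extra $2q^{j-1}(q^i-1)$ comes from the exactly-one-nonzero pieces with $\sigma\neq c_\rho$.

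Your identification of the second summand is correct. The total does not depend on the grouping, so summing all your contributions still proves the lemma; only the term-by-term matching needs correcting.
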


\begin{proof}
Put
$N=n-i-j$ and $D=d-i-j$.
We consider the case $(k,l)=(i,j-1)$. By the type criteria in
Subsection~\ref{subsec:type-criteria}, the reduced block matrix has
type $(i,j)$ if and only if one of the following five mutually
disjoint conditions holds:
\[
\begin{array}{ll}
\textup{(a)}&
\rho'\notin pR,\quad
\alpha_1=0\text{ or }\xi_1=0,\quad
u_3\in(pR)^N,\quad v_3\in(pR)^D,\\[2mm]
\textup{(b)}&
\rho'\in pR,\quad
\alpha_1=0,\quad \xi_1\neq0,\quad
u_3\in(pR)^N\setminus\{0\},\quad v_3\in(pR)^D,\\[2mm]
\textup{(c)}&
\rho'\in pR,\quad
\alpha_1\neq0,\quad \xi_1=0,\quad
u_3\in(pR)^N,\quad v_3\in(pR)^D\setminus\{0\},\\[2mm]
\textup{(d)}&
\alpha_1=0,\quad \xi_1\ \text{arbitrary},\quad
u_3=0,\quad v_3\notin(pR)^D,\\[2mm]
\textup{(e)}&
\alpha_1\ \text{arbitrary},\quad \xi_1=0,\quad
u_3\notin(pR)^N,\quad v_3=0.
\end{array}
\]
We count the contributions of \textup{(a)}--\textup{(e)} separately.

First let $\rho\in\{0,p\}$. Since both values reduce to $0$ modulo $p$,
\[
\rho'\in pR
\quad\Longleftrightarrow\quad
\sigma =0,
\qquad
\rho'\notin pR
\quad\Longleftrightarrow\quad
\sigma \neq0.
\]
For case \textup{(a)}, the variables
$\beta_2,\eta_2$
are unrestricted. By
Lemma~\ref{lem:sixtuple-nonzero-sum-xi-one-or-eta-two-zero-count},
applied with $k=i$ and $l=j-1$, the number of choices of the finite-field variables satisfying
\[
\sigma =\beta_1^t\xi_1+\alpha_1^t\eta_1+\alpha_2^t\xi_2\neq0
\]
and such that at least one of $\alpha_1$ and $\xi_1$ is zero is
\[
q^{2j-2} \cdot q^{2i+j-2}(q-1)
\left(2q^{i+j-1}-q^{j-1}-1\right)
=
q^{2i+3j-4}(q-1)
\left(2q^{i+j-1}-q^{j-1}-1\right).
\]
For this contribution, we require $u_3\in(pR)^N$ and $v_3\in(pR)^D$,
which gives $q^{N+D}$
choices for $(u_3,v_3)$. Hence the first
contribution is
\[
q^{N+D+2i+3j-4}(q-1)
\left(2q^{i+j-1}-q^{j-1}-1\right).
\]
For case \textup{(b)}, the variables
$\beta_2,\eta_1,\eta_2$
are unrestricted. By
Lemma~\ref{lem:two-block-fixed-sum-eta-one-nonzero-count}, applied
with $k=i$ and $l=j-1$, the number of choices of the finite-field variables
satisfying
\[
\sigma=\beta_1^t\xi_1+\alpha_2^t\xi_2=0,
\qquad
\xi_1\neq0
\]
is
\[
q^{i+2j-2}\cdot q^{i+2j-3}(q^i-1)
=
q^{2i+4j-5}(q^i-1).
\]
For this contribution, we require $u_3\in(pR)^N\setminus\{0\}$ and $v_3\in(pR)^D$,
which gives $q^D(q^N-1)$
choices for $(u_3,v_3)$. Hence the second
contribution is
\[
q^{D+2i+4j-5}(q^i-1)(q^N-1).
\]
For case \textup{(c)}, the variables
$\beta_1,\beta_2,\eta_2$
are unrestricted. By
Lemma~\ref{lem:two-block-fixed-sum-xi-one-nonzero-count}, applied with $k=i$ and $l=j-1$, the number of choices
satisfying
\[
\sigma=\alpha_1^t\eta_1+\alpha_2^t\xi_2=0,
\qquad
\alpha_1\neq0
\]
is
\[
q^{i+2j-2}\cdot q^{i+2j-3}(q^i-1)
=
q^{2i+4j-5}(q^i-1).
\]
For this contribution, we require $u_3\in(pR)^N$ and $v_3\in(pR)^D\setminus\{0\}$,
which gives $q^N(q^D-1)$
choices for $(u_3,v_3)$. Hence the third
contribution is
\[
q^{N+2i+4j-5}(q^i-1)(q^D-1).
\]
For case \textup{(d)}, the variables
$\beta_1,\alpha_2,\beta_2,\xi_1,\eta_1,\xi_2,\eta_2$
are unrestricted, giving $q^{3i+4j-4}$ choices.
Since $u_3=0$ and $v_3\notin(pR)^D$, there are $q^D(q^D-1)$
choices for $(u_3,v_3)$. Hence the fourth contribution is
\[
q^{D+3i+4j-4}(q^D-1).
\]
For case \textup{(e)}, the variables
$\alpha_1,\beta_1,\alpha_2,\beta_2,\eta_1,\xi_2,\eta_2$
are unrestricted, giving $q^{3i+4j-4}$
choices. Since $u_3\notin(pR)^N$ and $v_3=0$, there are $q^N(q^N-1)$
choices for $(u_3,v_3)$. Hence the fifth contribution is
\[
q^{N+3i+4j-4}(q^N-1).
\]
Adding the five contributions and simplifying, we obtain
\[
\begin{aligned}
N_{i,j}(\rho;i,j-1)
&=
q^{2i+4j-5}(q^i-1)(2q^{N+D} - q^N - q^D) \\
&\quad
+ q^{3i+4j-4}\left( q^N(q^N - 1) + q^D(q^D-1)\right)\\
&\quad
+ q^{N+D+2i+3j-4}(q-1)(2q^{i+j-1}-q^{j-1}-1)
\end{aligned}
\]
for $\rho\in\{0,p\}$.
Substituting $N=n-i-j$ and $D=d-i-j$ gives \eqref{eq:N-i-j-1-0p}.

Next suppose that $\rho=1$. Then
\[
\rho'\in pR
\quad\Longleftrightarrow\quad
\sigma=1,
\qquad
\rho'\notin pR
\quad\Longleftrightarrow\quad
\sigma\neq1.
\]
For case \textup{(a)}, the variables
$\beta_2,\eta_2$
are unrestricted. By
Lemma~\ref{lem:sixtuple-not-fixed-sum-xi-one-or-eta-two-zero-count},
applied with $k=i$ and $l=j-1$, the number
of choices of the finite-field variables satisfying
\[
\sigma=\beta_1^t\xi_1+\alpha_1^t\eta_1+\alpha_2^t\xi_2\neq1
\]
and such that at least one of $\alpha_1$ and $\xi_1$ is zero is
\[
q^{2j-2} \cdot
q^{2i+j-2}
\left(
2q^{i+j}-2q^{i+j-1}-q^j+q^{j-1}+1
\right)
=
q^{2i+3j-4}
\left(
2q^{i+j}-2q^{i+j-1}-q^j+q^{j-1}+1
\right).
\]
For this contribution, we require $u_3\in(pR)^N$ and $v_3\in(pR)^D$,
which gives 
$q^{N+D}$
choices for $(u_3,v_3)$. Hence the first
contribution is
\[
q^{N+D+2i+3j-4}
\left(
2q^{i+j}-2q^{i+j-1}-q^j+q^{j-1}+1
\right).
\]
For case \textup{(b)}, the variables
$\beta_2,\eta_1,\eta_2$
are unrestricted. By
Lemma~\ref{lem:two-block-fixed-sum-eta-one-nonzero-count}, applied
with $k=i$ and $l=j-1$, the number of choices of the finite-field variables
satisfying
\[
\sigma=\beta_1^t\xi_1+\alpha_2^t\xi_2=1,
\qquad
\xi_1\neq0
\]
is
\[
q^{i+2j-2}\cdot q^{i+2j-3}(q^i-1)
=
q^{2i+4j-5}(q^i-1).
\]
For this contribution, we require $u_3\in(pR)^N\setminus\{0\}$ and $v_3\in(pR)^D$,
which gives 
$q^D(q^N-1)$
choices for $(u_3,v_3)$. Hence the second
contribution is
\[
q^{D+2i+4j-5}(q^i-1)(q^N-1).
\]
For case \textup{(c)}, the variables
$\beta_1,\beta_2,\eta_2$
are unrestricted. By
Lemma~\ref{lem:two-block-fixed-sum-xi-one-nonzero-count}, applied with $k=i$ and $l=j-1$, the number of choices of the finite-field variables
satisfying
\[
\sigma=\alpha_1^t\eta_1+\alpha_2^t\xi_2=1,
\qquad
\alpha_1\neq0
\]
is
\[
q^{i+2j-2}\cdot q^{i+2j-3}(q^i-1)
=
q^{2i+4j-5}(q^i-1).
\]
For this contribution, we require $u_3\in(pR)^N$ and $v_3\in(pR)^D\setminus\{0\}$,
which gives 
$q^N(q^D-1)$
choices for $(u_3,v_3)$. Hence the third
contribution is
\[
q^{N+2i+4j-5}(q^i-1)(q^D-1).
\]
For case \textup{(d)}, the variables
$\beta_1,\alpha_2,\beta_2,\xi_1,\eta_1,\xi_2,\eta_2$
are unrestricted, giving $q^{3i+4j-4}$
choices. Since $u_3=0$ and $v_3\notin(pR)^D$, there are
$q^D(q^D-1)$
choices for $(u_3,v_3)$. Hence the fourth contribution is
\[
q^{D+3i+4j-4}(q^D-1).
\]
For case \textup{(e)}, the variables
$\alpha_1,\beta_1,\alpha_2,\beta_2,\eta_1,\xi_2,\eta_2$
are unrestricted, giving $q^{3i+4j-4}$
choices. Since $u_3\notin(pR)^N$ and $v_3=0$, there are
$q^N(q^N-1)$
choices for $(u_3,v_3)$. Hence the fifth contribution is
\[
q^{N+3i+4j-4}(q^N-1).
\]
Adding the five contributions and simplifying, we obtain
\[
\begin{aligned}
N_{i,j}(1;i,j-1)
&=
q^{2i+4j-5}(q^i-1)(2q^{N+D} - q^N - q^D) \\
&\quad
+ q^{3i+4j-4}\left( q^N(q^N - 1) + q^D(q^D-1)\right)\\
&\quad
+ q^{N+D+2i+3j-4}(2q^{i+j} - 2q^{i+j-1}-q^j+q^{j-1}+1).
\end{aligned}
\]
Substituting $N=n-i-j$ and $D=d-i-j$ gives \eqref{eq:N-i-j-1-1}.
\end{proof}

\begin{lemma}
\label{lem:N-ij-i+1-j-1}
Assume that $(i+1,j-1)\in I_{d-1}$. Then
\[
N_{i,j}(\rho;i+1,j-1)
=
q^{2i+4j-3}
(q^{i+1}-1)
(q^{n-i-j-1}+q^{d-i-j-1}),
\qquad \rho\in\{0,p,1\}.
\]
\end{lemma}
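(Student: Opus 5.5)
We determine exactly which configurations of the reduced block matrix $M$ have type $(i,j)$ when $(k,l)=(i+1,j-1)$, and then count them with a single linear-algebra argument over $\mathbb F_q$. Put $N=n-i-j-1$ and $D=d-i-j-1$; these are the lengths of $u_3$ and $v_3$ for this $(k,l)$. The target type $(i,j)=(k-1,l+1)$ is what we must look for in the type criteria of Subsection~\ref{subsec:type-criteria}.

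\textbf{Step 1: isolating the contributing cases.} We scan Lemmas~\ref{lem:type-case-u3-v3-zero}--\ref{lem:type-case-u-nonp-v-nonp} for the output $(k-1,l+1)$.
\begin{enumerate}
\item Whenever $u_3\notin(pR)^N$ or $v_3\notin(pR)^D$, the possible outputs are $(k-1,l+2)$, $(k,l+1)$, $(k+1,l+1)$ and $(k,l+2)$. None of these is $(k-1,l+1)$.
\item When $u_3$ and $v_3$ are both nonzero and lie in $pR$, Lemma~\ref{lem:type-case-u-p-nonzero-v-p-nonzero} never produces $(k-1,l+1)$.
\end{enumerate}
The surviving cases are the following.
\begin{enumerate}
\item[(a)] $u_3\in(pR)^N$, $v_3=0$, $\alpha_1\neq0$, $\xi_1=0$ and $\rho'\in pR$. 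This comes from Lemma~\ref{lem:type-case-u3-v3-zero}(2) with $u_3=0$, together with Lemma~\ref{lem:type-case-u-p-nonzero-v-zero}(3).
\item[(b)] $u_3=0$, $v_3\in(pR)^D$, $\alpha_1=0$, $\xi_1\neq0$ and $\rho'\in pR$. This is the transposed case, coming from Lemma~\ref{lem:type-case-u3-v3-zero}(2) and Lemma~\ref{lem:type-case-u-zero-v-p-nonzero}(3).
\end{enumerate}
Cases (a) and (b) are disjoint because $\alpha_1\neq0$ in (a) and $\alpha_1=0$ in (b). For $u_3$ in case (a) there are $q^N$ choices, and for $v_3$ in case (b) there are $q^D$ choices.

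\textbf{Step 2: translating the condition on $\rho'$.} Since $\rho'=\rho-A-pB$, the condition $\rho'\in pR$ depends only on reductions modulo $p$. It is equivalent to $\sigma=\bar\rho$, where $\bar\rho\in\{0,1\}$ is the residue of $\rho$. So the carry $\kappa$ and the quantity $\tau$ play no role, and Lemma~\ref{lem:carry-shift-count} is not needed.

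\textbf{Step 3: counting case (a).} Here $\xi_1=0$, so $\sigma=\alpha_1^t\eta_1+\alpha_2^t\xi_2$. Fix $\alpha_1\neq0$ and all the other variables. The map $\eta_1\mapsto\alpha_1^t\eta_1$ is a surjective linear functional on $\mathbb F_q^{k}$. Hence exactly $q^{k-1}$ choices of $\eta_1$ give $\sigma=\bar\rho$, whatever the value of $\bar\rho$. The remaining counts are as follows: $\alpha_1$ has $q^{k}-1$ choices, $\beta_1$ has $q^k$, and $\alpha_2,\beta_2,\xi_2,\eta_2$ together have $q^{4l}$. Multiplying, case (a) contributes
\[
(q^{k}-1)\,q^{2k-1}\,q^{4l}\,q^N = q^{2i+4j-3}(q^{i+1}-1)\,q^{n-i-j-1}.
\]
Case (b) is symmetric, with the roles of $\xi_1$ and $\beta_1$ interchanged. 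It gives the same expression with $q^{d-i-j-1}$ in place of $q^{n-i-j-1}$. Adding the two contributions yields the claimed formula. The count is visibly independent of $\rho\in\{0,p,1\}$.

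The main obstacle is Step 1: we must make sure no other branch of the case lemmas produces $(k-1,l+1)$. We should also check the degenerate dimensions, namely $N=0$ or $D=0$, and $l=0$ where the $\alpha_2,\xi_2$ block is absent. In those cases the formulas remain valid because the corresponding module is zero and contributes a factor $q^0=1$.
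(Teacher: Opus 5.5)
Your proposal is correct and follows the paper's proof: you isolate the same two configurations ($\alpha_1\neq0$, $\xi_1=0$, $u_3\in(pR)^N$, $v_3=0$, and its transpose) and reduce $\rho'\in pR$ to $\sigma=\bar\rho$ in the same way. Your inline linear-functional count is exactly what the paper gets by citing Lemmas~\ref{lem:two-block-fixed-sum-eta-one-nonzero-count} and~\ref{lem:two-block-fixed-sum-xi-one-nonzero-count}, though the symmetry between your two cases swaps $\alpha_1\leftrightarrow\xi_1$ and $\eta_1\leftrightarrow\beta_1$, not $\xi_1$ and $\beta_1$ as you wrote.
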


\begin{proof}
Put
$N=n-i-j-1$ and $D=d-i-j-1$.
We consider the case $(k,l)=(i+1,j-1)$. By the type criteria in
Subsection~\ref{subsec:type-criteria}, the reduced block matrix has
type $(i,j)$ precisely in the following two cases:
\[
\begin{array}{ll}
\textup{(a)}&
\rho'\in pR,\qquad
\alpha_1=0,\qquad \xi_1\neq0,\qquad
u_3=0, \qquad
v_3\in(pR)^D,\\[2mm]
\textup{(b)}&
\rho'\in pR,\qquad
\alpha_1\neq0,\qquad \xi_1=0,\qquad
u_3\in(pR)^N, \qquad
v_3=0.
\end{array}
\]

The condition $\rho'\in pR$ is equivalent to
\[
\sigma =c_\rho,
\qquad
c_\rho=
\begin{cases}
0, & \rho\in\{0,p\},\\
1, & \rho=1.
\end{cases}
\]
For case \textup{(a)}, the variables
$\beta_2,\eta_1,\eta_2$
are unrestricted. By
Lemma~\ref{lem:two-block-fixed-sum-eta-one-nonzero-count}, applied
with $k=i+1$ and $l=j-1$, the number of
choices satisfying
\[
\sigma=\beta_1^t\xi_1+\alpha_2^t\xi_2=c_\rho,
\qquad
\xi_1\neq0
\]
is
\[
q^{i+2j-1}\cdot q^{i+2j-2}(q^{i+1}-1)
=
q^{2i+4j-3}(q^{i+1}-1).
\]
There are $q^D$ choices for $(u_3,v_3)$ in case \textup{(a)}, so this case contributes
\[
q^{D+2i+4j-3}(q^{i+1}-1).
\]
For case \textup{(b)}, the variables
$\beta_1,\beta_2,\eta_2$
are unrestricted. By
Lemma~\ref{lem:two-block-fixed-sum-xi-one-nonzero-count}, applied with $k=i+1$ and $l=j-1$, the number of choices satisfying
\[
\sigma = \alpha_1^t\eta_1+\alpha_2^t\xi_2=c_\rho,
\qquad
\alpha_1\neq0
\]
is
\[
q^{i+2j-1}\cdot q^{i+2j-2}(q^{i+1}-1)
=
q^{2i+4j-3}(q^{i+1}-1).
\]
For this contribution, we require $u_3\in(pR)^N$ and $v_3=0$,
which gives 
$q^N$
choices for $(u_3,v_3)$. Hence the second
contribution is
\[
q^{N+2i+4j-3}(q^{i+1}-1).
\]

Adding the two contributions, we obtain
\[
N_{i,j}(\rho;i+1,j-1)
=
q^{2i+4j-3}(q^{i+1}-1)(q^N+q^D)
\]
for all $\rho\in\{0,p,1\}$.
Substituting $N=n-i-j-1$ and $D=d-i-j-1$ gives the stated formula.
\end{proof}

\begin{lemma}
\label{lem:N-ij-i-1-j-1}
Assume that $(i-1,j-1)\in I_{d-1}$. Then
\[
N_{i,j}(\rho;i-1,j-1)
=
q^{3i+4j-7}
(q^{n-i-j+1}+q^{d-i-j+1})
(q^{n-i-j+1}-1)
(q^{d-i-j+1}-1),
\qquad \rho\in\{0,p,1\}.
\]
\end{lemma}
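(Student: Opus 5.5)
The plan is to follow the pattern of Lemmas~\ref{lem:N-ij-i-2-j} and \ref{lem:N-ij-i+1-j-1}. Put $(k,l)=(i-1,j-1)$, so that
\[
N=n-k-l-1=n-i-j+1,\qquad D=d-k-l-1=d-i-j+1 .
\]
The first step is to read off from the type criteria of Subsection~\ref{subsec:type-criteria} exactly which configurations of $(\rho',\alpha_1,\xi_1,u_3,v_3)$ produce type $(i,j)=(k+1,l+1)$. I would go through Lemmas~\ref{lem:type-case-u3-v3-zero}--\ref{lem:type-case-u-nonp-v-nonp} and list every output type.

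\begin{itemize}
\item The cases with $u_3,v_3\in(pR)^N\times(pR)^D$ only give types of the form $(k+\epsilon,l)$, $(k-\epsilon',l+1)$ or $(k-\epsilon'',l+2)$, and none of these equals $(k+1,l+1)$.
\item The cases with exactly one of $u_3,v_3$ zero and the other a non-$p$ vector give only $(k,l+1)$ or $(k-1,l+2)$.
\item Lemma~\ref{lem:type-case-u-nonp-v-nonp} gives $(k,l+2)$.
\end{itemize}

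The only sources of $(k+1,l+1)$ are therefore the following two mutually disjoint cases:
\[
\begin{array}{ll}
\textup{(a)}& u_3\notin(pR)^N,\quad v_3\in(pR)^D\setminus\{0\},\quad \xi_1=0,\\[1mm]
\textup{(b)}& u_3\in(pR)^N\setminus\{0\},\quad v_3\notin(pR)^D,\quad \alpha_1=0.
\end{array}
\]
Case (a) comes from Lemma~\ref{lem:type-case-u-nonp-v-p-nonzero}, and case (b) from its transpose, Lemma~\ref{lem:type-case-u-p-nonzero-v-nonp}. In neither case is there any condition on $\rho'$. This is why the count will not depend on $\rho\in\{0,p,1\}$, and it means no appeal to Lemma~\ref{lem:carry-shift-count} is needed.

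The second step is the count itself, which is elementary.

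\begin{itemize}
\item In case (a), $\xi_1=0$ is forced. The remaining finite-field variables $\alpha_1,\beta_1,\eta_1\in\mathbb F_q^{i-1}$ and $\alpha_2,\beta_2,\xi_2,\eta_2\in\mathbb F_q^{j-1}$ are unrestricted, which gives $q^{3(i-1)+4(j-1)}=q^{3i+4j-7}$ choices. The number of $u_3\in R^N\setminus(pR)^N$ is $q^{2N}-q^N=q^N(q^N-1)$, and the number of $v_3\in(pR)^D\setminus\{0\}$ is $q^D-1$. So case (a) contributes $q^{3i+4j-7}q^N(q^N-1)(q^D-1)$.
\item Case (b) is symmetric, with $\alpha_1=0$ in place of $\xi_1=0$. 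It contributes $q^{3i+4j-7}q^D(q^D-1)(q^N-1)$.
\end{itemize}

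Adding the two contributions gives
\[
q^{3i+4j-7}(q^N+q^D)(q^N-1)(q^D-1),
\]
and substituting $N=n-i-j+1$ and $D=d-i-j+1$ yields the stated formula.

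The main obstacle is the first step: making sure the case enumeration is exhaustive, so that no other configuration yields $(k+1,l+1)$. In particular, case (1) of Lemma~\ref{lem:type-case-u-p-nonzero-v-p-nonzero} gives $(k+2,l)$, not $(k+1,l+1)$, and has to be ruled out explicitly. I would handle this by tabulating all outputs of the nine $(u_3,v_3)$ classes and checking that $(k+1,l+1)$ appears only in the two lemmas cited above. Degenerate dimensions ($N=0$ or $D=0$) are automatically handled, since then the relevant factor $q^N-1$ or $q^D-1$ vanishes.
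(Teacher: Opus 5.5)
Your proposal is correct and follows essentially the same route as the paper. You use the same two transposed cases with no condition on $\rho'$ (your (a) and (b) are the paper's (b) and (a)), the same counts $q^{3i+4j-7}q^N(q^N-1)(q^D-1)$ and $q^{3i+4j-7}q^D(q^D-1)(q^N-1)$, and your explicit tabulation showing that no other $(u_3,v_3)$ class yields type $(k+1,l+1)$ simply spells out the paper's appeal to the type criteria.
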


\begin{proof}
Put
$N=n-i-j+1$ and $D=d-i-j+1$.
We consider the case $(k,l)=(i-1,j-1)$. By the type criteria in
Subsection~\ref{subsec:type-criteria},
the reduced block matrix has type $(i,j)$ precisely in the following two cases,
which are transposes of each other:
\[
\begin{array}{ll}
\textup{(a)}&
\alpha_1=0,\qquad
u_3\in(pR)^N\setminus\{0\},\qquad
v_3\notin(pR)^D,\\[2mm]
\textup{(b)}&
\xi_1=0,\qquad
u_3\notin(pR)^N,\qquad
v_3\in(pR)^D\setminus\{0\}.
\end{array}
\]
In both cases, no condition is imposed on $\rho'$. Hence the count is
independent of $\rho\in\{0,p,1\}$.

For case \textup{(a)}, the variables
$\beta_1,\alpha_2,\beta_2,\xi_1,\eta_1,\xi_2,\eta_2$
are unrestricted, giving $q^{3i+4j-7}$
choices. Since $u_3\in(pR)^N\setminus\{0\}$ and $v_3\notin(pR)^D$, there are
$q^D(q^N-1)(q^D-1)$
choices for $(u_3,v_3)$. Hence the first contribution is
\[
q^{D+3i+4j-7}
(q^N-1)
(q^D-1).
\]

For case \textup{(b)}, the variables
$\alpha_1,\beta_1,\alpha_2,\beta_2,\eta_1,\xi_2,\eta_2$
are unrestricted, giving $q^{3i+4j-7}$
choices. Since $u_3\notin(pR)^N$ and $v_3\in(pR)^D\setminus\{0\}$, there are
$q^N(q^N-1)(q^D-1)$
choices for $(u_3,v_3)$. Hence the second contribution is
\[
q^{N+3i+4j-7}
(q^N-1)
(q^D-1).
\]

Adding the two contributions, we obtain
\[
N_{i,j}(\rho;i-1,j-1)
=
q^{3i+4j-7}
(q^N+q^D)
(q^N-1)
(q^D-1)
\]
for all $\rho\in\{0,p,1\}$.
Substituting $N=n-i-j+1$, $D=d-i-j+1$ gives the stated formula.
\end{proof}

\begin{lemma}
\label{lem:N-ij-i+2-j-2}
Assume that $(i+2,j-2)\in I_{d-1}$. Then
\[
N_{i,j}(\rho;i+2,j-2)
=
q^{n+d+2j-6}
(q^{i+2}-1)
(q^{i+1}-1),
\qquad \rho\in\{0,p,1\}.
\]
\end{lemma}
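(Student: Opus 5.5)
Following the pattern of Lemmas~\ref{lem:N-ij-i+1-j-1} and~\ref{lem:N-ij-i-1-j-1}, I first read off from the type criteria of Subsection~\ref{subsec:type-criteria} exactly when the reduced matrix $M$ has type $(i,j)$ in the case $(k,l)=(i+2,j-2)$. The resulting type $(i,j)=(k-2,l+2)$ occurs only in item 3 of Lemma~\ref{lem:type-case-u3-v3-zero}, item 4 of Lemmas~\ref{lem:type-case-u-p-nonzero-v-zero} and~\ref{lem:type-case-u-zero-v-p-nonzero}, and item 3 of Lemma~\ref{lem:type-case-u-p-nonzero-v-p-nonzero}. In each of these the condition is
\[
\alpha_1\neq0,\qquad \xi_1\neq0,\qquad \alpha_1^t\xi_1=0,\qquad u_3\in(pR)^N,\qquad v_3\in(pR)^D .
\]
I will check that no other lemma produces $(k-2,l+2)$: the cases with $u_3$ or $v_3$ outside $(pR)^N$ or $(pR)^D$ give at best $(k-1,l+2)$, $(k,l+2)$ or $(k+1,l+1)$. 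Hence, with $N=n-i-j-1$ and $D=d-i-j-1$, the admissible pairs $(u_3,v_3)$ number $q^{N+D}=q^{n+d-2i-2j-2}$. No condition is placed on $\rho'$, so the count is independent of $\rho\in\{0,p,1\}$.

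Next I count the finite-field data. Here $k=i+2$ and $l=j-2$. The variables $\beta_1,\eta_1\in\mathbb F_q^{i+2}$ and $\alpha_2,\beta_2,\xi_2,\eta_2\in\mathbb F_q^{j-2}$ are unrestricted, which gives $q^{2(i+2)+4(j-2)}=q^{2i+4j-4}$ choices. The pairs $(\alpha_1,\xi_1)$ of nonzero orthogonal vectors in $\mathbb F_q^{i+2}$ are counted as follows: $\alpha_1$ has $q^{i+2}-1$ choices, and then $\xi_1$ ranges over the nonzero vectors of the hyperplane $\alpha_1^\perp$, giving $q^{i+1}-1$ choices. So there are $(q^{i+2}-1)(q^{i+1}-1)$ such pairs. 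If this elementary count is not already stated in Appendix~A, I will prove it inline.

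Multiplying the three factors gives
\[
q^{2i+4j-4}\,(q^{i+2}-1)(q^{i+1}-1)\,q^{n+d-2i-2j-2}=q^{n+d+2j-6}(q^{i+2}-1)(q^{i+1}-1),
\]
which is the claimed formula. The main obstacle is the first step: one has to be sure that the exhaustive reading of the type lemmas produces exactly this condition set. In particular, the cases $u_3=0$ and $u_3\in(pR)^N\setminus\{0\}$ must merge into $u_3\in(pR)^N$, and likewise for $v_3$. The arithmetic itself is routine.
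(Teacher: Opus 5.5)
Your proposal is correct and follows the paper's proof essentially verbatim. It uses the same condition set ($\alpha_1\neq0$, $\xi_1\neq0$, $\alpha_1^t\xi_1=0$, $u_3\in(pR)^N$, $v_3\in(pR)^D$, no condition on $\rho'$), the same $q^{2i+4j-4}$ free choices, and the same $q^{N+D}$ choices for $(u_3,v_3)$. The only cosmetic difference is that you count the nonzero orthogonal pairs directly via the hyperplane $\alpha_1^\perp$. The paper instead cites Lemma~\ref{lem:nonzero-vectors-zero-inner-product-count}, which gives the same $(q^{i+2}-1)(q^{i+1}-1)$.
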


\begin{proof}
Put
$N=n-i-j-1$ and $D=d-i-j-1$.
We consider the case $(k,l)=(i+2,j-2)$. By the type criteria in
Subsection~\ref{subsec:type-criteria},
the reduced block matrix has type $(i,j)$ if and only if
\[
\alpha_1\neq0,\qquad
\xi_1\neq0,\qquad
\alpha_1^t\xi_1=0,\qquad
u_3\in(pR)^N,\qquad
v_3\in(pR)^D.
\]
No condition is imposed on $\rho'$, and hence the count is independent
of $\rho\in\{0,p,1\}$.

The variables
$\beta_1,\alpha_2,\beta_2,\eta_1,\xi_2,\eta_2$
are unrestricted.
By Lemma~\ref{lem:nonzero-vectors-zero-inner-product-count},
applied in dimension $i+2$, the total number of choices satisfying
\[
\alpha_1\neq0,\qquad
\xi_1\neq0,\qquad
\alpha_1^t\xi_1=0
\]
is
\[
q^{2i+4j-4}(q^{i+2}-1)(q^{i+1}-1).
\]
There are $q^{N+D}$ choices for $(u_3,v_3)$. Hence
\[
N_{i,j}(\rho;i+2,j-2)
=
q^{N+D+2i+4j-4}
(q^{i+2}-1)
(q^{i+1}-1)
\]
for all $\rho\in\{0,p,1\}$.
Substituting $N=n-i-j-1$ and $D=d-i-j-1$ gives the stated formula.
\end{proof}

\begin{lemma}
\label{lem:N-ij-i+1-j-2}
Assume that $(i+1,j-2)\in I_{d-1}$. Then
\[
N_{i,j}(\rho;i+1,j-2)
=
q^{n+d+i+2j-6}
(q^{i+1}-1)
(q^{n-i-j+1}+q^{d-i-j+1}-q-1),
\qquad \rho\in\{0,p,1\}.
\]
\end{lemma}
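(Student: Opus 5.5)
The plan follows the pattern of Lemmas~\ref{lem:N-ij-i+2-j-2} and~\ref{lem:N-ij-i-1-j-1}. Put $(k,l)=(i+1,j-2)$, so that
\[
N=n-k-l-1=n-i-j,\qquad D=d-k-l-1=d-i-j .
\]
The target type $(i,j)=(k-1,l+2)$ means the reduced matrix $M$ must gain two unit factors and lose one $p$-factor. First I would scan Lemmas~\ref{lem:type-case-u3-v3-zero}--\ref{lem:type-case-u-nonp-v-nonp} and list every case whose outcome is $(k-1,l+2)$. These cases fall into three mutually disjoint conditions.
\begin{enumerate}
\item[(a)] $\alpha_1\neq0$, $\xi_1\neq0$ and $\alpha_1^t\xi_1\neq0$, with $u_3\in(pR)^N$ and $v_3\in(pR)^D$. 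This collects item 3 of Lemma~\ref{lem:type-case-u3-v3-zero}, item 4 of Lemmas~\ref{lem:type-case-u-p-nonzero-v-zero} and~\ref{lem:type-case-u-zero-v-p-nonzero}, and item 3 of Lemma~\ref{lem:type-case-u-p-nonzero-v-p-nonzero}.
\item[(b)] $\xi_1\neq0$ with $\alpha_1$ arbitrary, $u_3\notin(pR)^N$ and $v_3\in(pR)^D$. This comes from Lemmas~\ref{lem:type-case-u-nonp-v-zero} and~\ref{lem:type-case-u-nonp-v-p-nonzero}.
\item[(c)] The transpose of (b): $\alpha_1\neq0$, $u_3\in(pR)^N$ and $v_3\notin(pR)^D$. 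This comes from Lemmas~\ref{lem:type-case-u-zero-v-nonp} and~\ref{lem:type-case-u-p-nonzero-v-nonp}.
\end{enumerate}
The case where both $u_3$ and $v_3$ are non-$p$ must be excluded, since it gives $(k,l+2)$ by Lemma~\ref{lem:type-case-u-nonp-v-nonp}. None of (a)--(c) imposes a condition on $\rho'$, so the count is independent of $\rho\in\{0,p,1\}$, and no appeal to Lemma~\ref{lem:carry-shift-count} is needed.

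Next I would count each case. In case (a), the variables $\beta_1,\eta_1,\alpha_2,\beta_2,\xi_2,\eta_2$ are free and give $q^{2(i+1)+4(j-2)}=q^{2i+4j-6}$ choices. For the pairs with $\alpha_1\neq0$ and $\alpha_1^t\xi_1\neq0$ in dimension $i+1$ I would use the elementary count from Appendix~A, giving $(q^{i+1}-1)(q^{i+1}-q^i)$. Then $(u_3,v_3)$ contributes $q^{N+D}$. In case (b), the variables $\alpha_1,\beta_1,\eta_1$ and the four $l$-blocks are free, and $\xi_1\neq0$, giving $q^{3i+4j-5}(q^{i+1}-1)$ choices. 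The pair $(u_3,v_3)$ contributes $(q^{2N}-q^N)\,q^D$, since a vector in $R^N$ outside $(pR)^N$ has $q^{2N}-q^N$ choices. Case (c) is the same with $N$ and $D$ interchanged.

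Finally I would add the three contributions and factor out $q^{N+D}(q^{i+1}-1)q^{3i+4j-6}$:
\[
q^{N+D+3i+4j-6}(q^{i+1}-1)\Bigl[(q-1)+q\bigl(q^N-1\bigr)+q\bigl(q^D-1\bigr)\Bigr]
=q^{N+D+3i+4j-6}(q^{i+1}-1)\bigl(q^{N+1}+q^{D+1}-q-1\bigr).
\]
Since $N+D+3i+4j-6=n+d+i+2j-6$, substituting $N=n-i-j$ and $D=d-i-j$ yields the stated formula. I expect the main obstacle to be the first step, namely making the case list complete and disjoint. In particular, one must confirm that no case with $\alpha_1^t\xi_1=0$, and no case with both $u_3,v_3$ non-$p$, produces $(k-1,l+2)$. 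The arithmetic afterwards is routine.
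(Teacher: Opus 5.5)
Your proposal is correct and follows the paper's proof essentially step for step. You use the same three mutually exclusive cases (a)--(c) with no condition on $\rho'$, and the same counts: the Appendix count $q^{i}(q^{i+1}-1)(q-1)$ for $\alpha_1^t\xi_1\neq0$ in dimension $i+1$ in case (a), and $q^{3i+4j-5}(q^{i+1}-1)$ times $q^{N+D}(q^N-1)$, respectively $q^{N+D}(q^D-1)$, in cases (b) and (c). You also reach the same final factorization with $N=n-i-j$ and $D=d-i-j$; your explicit check that no other type-criteria case yields $(k-1,l+2)$ is something the paper leaves implicit, but it does not change the argument.
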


\begin{proof}
Put
$N=n-i-j$ and $D=d-i-j$.
We consider the case $(k,l)=(i+1,j-2)$. By the type criteria in
Subsection~\ref{subsec:type-criteria}, the reduced block matrix has type $(i,j)$ if and only if one of the following three mutually exclusive conditions holds:
\[
\begin{array}{ll}
\textup{(a)}&
\alpha_1\neq0,\qquad
\xi_1\neq0,\qquad
\alpha_1^t\xi_1\neq0,\qquad
u_3\in(pR)^N,\qquad
v_3\in(pR)^D,\\[2mm]
\textup{(b)}&
\xi_1\neq0,\qquad
u_3\notin(pR)^N,\qquad
v_3\in(pR)^D,\\[2mm]
\textup{(c)}&
\alpha_1\neq0,\qquad
u_3\in(pR)^N,\qquad
v_3\notin(pR)^D.
\end{array}
\]
In all three cases, no condition is imposed on $\rho'$. Hence the
count is independent of $\rho\in\{0,p,1\}$.

For case \textup{(a)}, the variables
$\beta_1,\alpha_2,\beta_2,\eta_1,\xi_2,\eta_2$
are unrestricted. By
Lemma~\ref{lem:nonzero-inner-product-total-count}, applied in
dimension $i+1$, the number of choices of the finite-field variables
satisfying
\[
\alpha_1^t\xi_1\neq0
\]
is
\[
q^{2i+4j-6} \cdot q^i(q^{i+1}-1)(q-1) 
=
q^{3i+4j-6}(q^{i+1}-1)(q-1).
\]
For this contribution, we require $u_3\in(pR)^N$ and $v_3\in(pR)^D$,
which gives $q^{N+D}$ choices for $(u_3,v_3)$. Hence the first
contribution is
\[
q^{N+D+3i+4j-6}(q^{i+1}-1)(q-1).
\]

For case \textup{(b)}, the variables
$\alpha_1,\beta_1,\alpha_2,\beta_2,\eta_1,\xi_2,\eta_2$
are unrestricted, and $\xi_1$ is nonzero. Therefore the number of
choices for the finite-field variables is
\[
q^{3i+4j-5}(q^{i+1}-1).
\]
For this contribution, we require $u_3\notin(pR)^N$ and $v_3\in(pR)^D$.
Thus there are $q^{N+D}(q^N-1)$
choices for $(u_3,v_3)$. Hence the second contribution is
\[
q^{N+D+3i+4j-5}(q^{i+1}-1)(q^N-1).
\]

Finally, consider the case \textup{(c)}.
The variables
$\beta_1,\alpha_2,\beta_2,\xi_1,\eta_1,\xi_2,\eta_2$
are unrestricted, and $\alpha_1$ is nonzero. Therefore the number of
choices for the finite-field variables is again
\[
q^{3i+4j-5}(q^{i+1}-1).
\]
For this contribution, we require $u_3\in(pR)^N$ and $v_3\notin(pR)^D$.
Thus there are $q^{N+D}(q^D-1)$
choices for $(u_3,v_3)$. Hence the third contribution is
\[
q^{N+D+3i+4j-5}(q^{i+1}-1)(q^D-1).
\]

Adding the three contributions, we obtain
\[
N_{i,j}(\rho;i+1,j-2)
=
q^{N+D+3i+4j-6}(q^{i+1}-1)
(q^{N+1}+q^{D+1}-q-1)
\]
for all $\rho\in\{0,p,1\}$.
Substituting $N=n-i-j$ and $D=d-i-j$ gives the stated formula.
\end{proof}

\begin{lemma}
\label{lem:N-ij-i-j-2}
Assume that $(i,j-2)\in I_{d-1}$. Then
\[
N_{i,j}(\rho;i,j-2)
=
q^{n+d+2i+2j-6}
(q^{n-i-j+1}-1)
(q^{d-i-j+1}-1),
\qquad \rho\in\{0,p,1\}.
\]
\end{lemma}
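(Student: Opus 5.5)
We follow the pattern of Lemmas~\ref{lem:N-ij-i+2-j-2} and~\ref{lem:N-ij-i+1-j-2}. Put $N=n-i-j+1$ and $D=d-i-j+1$; these are the lengths of $u_3$ and $v_3$ when $(k,l)=(i,j-2)$, since $n-k-l-1=n-i-j+1$.

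The first step is to identify, from the type criteria in Subsection~\ref{subsec:type-criteria}, the configurations with $(k,l)=(i,j-2)$ whose reduced matrix has type $(k,l+2)=(i,j)$. Scanning Lemmas~\ref{lem:type-case-u3-v3-zero}--\ref{lem:type-case-u-nonp-v-nonp}, the outcome $(k,l+2)$ occurs in exactly one case: Lemma~\ref{lem:type-case-u-nonp-v-nonp}, where $u_3\notin(pR)^N$ and $v_3\notin(pR)^D$. Every other case yields at most $l+1$ unit factors or has shape $(k-1,l+2)$ or $(k-2,l+2)$. In particular, in Lemma~\ref{lem:type-case-u-nonp-v-zero} and its companions the outcomes are $(k-1,l+2)$ or $(k,l+1)$ or $(k+1,l+1)$, never $(k,l+2)$. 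So the condition is simply
\[
u_3\notin(pR)^N,\qquad v_3\notin(pR)^D,
\]
with no constraint on $\rho'$, $\alpha_1$, or $\xi_1$. Hence the count is independent of $\rho\in\{0,p,1\}$.

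The second step is the count. The finite-field variables $\alpha_1,\beta_1,\xi_1,\eta_1\in\mathbb F_q^{i}$ and $\alpha_2,\beta_2,\xi_2,\eta_2\in\mathbb F_q^{j-2}$ are all unrestricted, giving $q^{4i+4(j-2)}=q^{4i+4j-8}$ choices. The number of $u_3\in R^N$ with $u_3\notin(pR)^N$ is $q^{2N}-q^N=q^N(q^N-1)$, and similarly $q^D(q^D-1)$ for $v_3$. Therefore
\[
N_{i,j}(\rho;i,j-2)=q^{4i+4j-8}\,q^{N+D}(q^N-1)(q^D-1).
\]
Substituting $N+D=n+d-2i-2j+2$ gives the exponent $4i+4j-8+n+d-2i-2j+2=n+d+2i+2j-6$, which matches the claim.

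The only delicate point is the first step: checking exhaustively that no other case of the type criteria produces $(k,l+2)$. This amounts to a careful reading of the lists; the counting itself is immediate. We also note that $(i,j-2)\in I_{d-1}$ gives $N,D\ge 1$ when the relevant modules are nonzero. When a module is zero (for instance $D=0$), both the configuration and the formula give zero, because of the factor $q^D-1$.
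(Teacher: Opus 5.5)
Your proposal is correct and follows the paper's proof essentially verbatim: the paper also isolates the single case $u_3\notin(pR)^N$, $v_3\notin(pR)^D$ from Lemma~\ref{lem:type-case-u-nonp-v-nonp}, counts $q^{4i+4j-8}$ unrestricted finite-field choices and $q^N(q^N-1)\,q^D(q^D-1)$ choices for $(u_3,v_3)$, and then substitutes $N=n-i-j+1$, $D=d-i-j+1$. Your explicit check that no other type criterion yields $(k,l+2)$ spells out a step the paper only asserts.
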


\begin{proof}
Put
$N=n-i-j+1$ and $D=d-i-j+1$.
We consider the case $(k,l)=(i,j-2)$. 
By the type criteria in
Subsection~\ref{subsec:type-criteria}, the reduced block matrix has
type $(i,j)$ precisely when
\[
u_3\notin(pR)^N,
\qquad
v_3\notin(pR)^D.
\]
In particular, no condition is imposed on $\rho'$. Hence the count is
independent of $\rho\in\{0,p,1\}$.

The finite-field variables are unrestricted, giving $q^{4i+4j-8}$ choices.
There are $^N(q^N-1)$ and $q^D(q^D-1)$ choices for $u_3$ and $v_3$, respectively.
Multiplying these factors gives
\[
N_{i,j}(\rho;i,j-2)
=
q^{N+D+4i+4j-8}
(q^N-1)(q^D-1)
\]
for all $\rho\in\{0,p,1\}$.
Substituting $N=n-i-j+1$ and $D=d-i-j+1$ gives the stated formula.
\end{proof}

\section{Eigenmatrices of the association scheme}

Throughout this section, we keep the notation of Sections~4 and~5.
In particular, $X=\operatorname{Mat}_{d\times n}(R)$ is regarded as
an abelian group under matrix addition, and $X_{i,j}$ denotes the
type class corresponding to $(i,j)\in I_d$.

Since the relations of $\mathfrak X$ are defined by
\[
(x,y)\in \mathcal R_{i,j}
\quad\Longleftrightarrow\quad
y-x\in X_{i,j},
\]
the adjacency matrix corresponding to $\mathcal R_{i,j}$ coincides
with the adjacency matrix of the Cayley graph
\[
\operatorname{Cay}(X,X_{i,j}),
\]
where the vertex set is $X$ and the connection set is $X_{i,j}$.

Since $X$ is a finite abelian group, its irreducible characters are
one-dimensional and indexed by matrices $z\in X$. Fix a canonical
additive character
\[
\psi:R\longrightarrow \mathbb{C}^\times
\]
which is nontrivial on $pR$. The character of the abelian group $X$
associated with $z$ is given by
\[
\chi_z(x)
=
\psi\left(\operatorname{tr}(zx^t)\right),
\]
where $\operatorname{tr}$ denotes the matrix trace.

The adjacency operator of $\operatorname{Cay}(X,X_{i,j})$ acts
diagonally in the Fourier basis, and the eigenvalue corresponding to
$\chi_z$ is
\[
\sum_{x\in X_{i,j}}\chi_z(x)
=
\sum_{x\in X_{i,j}}
\psi\left(\operatorname{tr}(zx^t)\right).
\]
Thus the spectrum of $\operatorname{Cay}(X,X_{i,j})$ is determined by
exponential sums over type classes. These sums depend only on the type
of $z$. For $d\geq 1$, and for $z\in X_{s,t}$, we define
\[
\Delta_{i,j}(s,t;d,n)
=
\sum_{x\in X_{i,j}}
\psi\left(\operatorname{tr}(zx^t)\right),
\qquad
(i,j),(s,t)\in I_d.
\]

For the trivial case $d=0$, we set
\[
\Delta_{0,0}(0,0;0,n)=1.
\]
We use the convention that
\[
\Delta_{i,j}(s,t;d,n)=0
\]
whenever $(i,j)\notin I_d$.

\begin{lemma}
\label{lem:Delta-initial}
Assume that $d \geq1$.
For $(i,j)\in I_d$, one has
\[
\Delta_{i,j}(0,0;d,n)
=
\begin{bmatrix} n\\ j \end{bmatrix}_q
\begin{bmatrix} n-j\\ i \end{bmatrix}_q
q^{j(n+d-i-j)}
\prod_{\nu=0}^{i+j-1}(q^d-q^\nu).
\]
\end{lemma}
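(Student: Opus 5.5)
The plan is to observe that the character indexed by the zero matrix is trivial, so the sum collapses to a cardinality already computed. If $z\in X_{0,0}$, then $z$ has Smith normal form with all invariant factors equal to $0$, so $z=0$. Because $\Delta_{i,j}(s,t;d,n)$ depends only on the type of $z$, we may evaluate it at $z=0$. In that case $\operatorname{tr}(zx^t)=0$ for every $x\in X$, hence $\psi(\operatorname{tr}(zx^t))=\psi(0)=1$. It follows that
\[
\Delta_{i,j}(0,0;d,n)=\sum_{x\in X_{i,j}}1=|X_{i,j}|.
\]

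The second and final step is to substitute the cardinality from Proposition~\ref{prop:type-class-cardinality}, which was proved for all $(i,j)\in I_d$ with $1\le d\le n$. It gives exactly the displayed product
\[
\begin{bmatrix} n\\ j \end{bmatrix}_q
\begin{bmatrix} n-j\\ i \end{bmatrix}_q
q^{j(n+d-i-j)}
\prod_{\nu=0}^{i+j-1}(q^d-q^\nu).
\]
The hypothesis $d\ge1$ is needed here because $\Delta$ is defined by the character sum only for $d\ge1$; the case $d=0$ is the separate convention $\Delta_{0,0}(0,0;0,n)=1$.

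There is no real obstacle. The only point to state explicitly is that type $(0,0)$ forces $z=0$, which holds because the Smith normal form is obtained by invertible row and column operations and therefore vanishes exactly when the matrix does. As a consistency check, this value coincides with $W_{i,j}(q^d;n,q)$, so it matches the initial condition of Lemma~\ref{lem:biaff-initial} at $a=q^d$, as Theorem~\ref{main} will later require.
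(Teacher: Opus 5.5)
Your proposal is correct and follows the paper's own proof: since the character indexed by $z=0$ is trivial, $\Delta_{i,j}(0,0;d,n)=|X_{i,j}|$, and Proposition~\ref{prop:type-class-cardinality} gives the formula. Your extra remarks, that type $(0,0)$ forces $z=0$ and that the value agrees with $W_{i,j}(q^d;n,q)$, are also correct.
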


\begin{proof}
If $z=0$, then $\chi_z(x)=1$ for all $x\in X$. Hence
\[
\Delta_{i,j}(0,0;d,n)=|X_{i,j}|.
\]
The desired formula follows from
Proposition~\ref{prop:type-class-cardinality}.
\end{proof}

\begin{lemma}
\label{lem:del_RR1}
Assume that $d \ge 1$ and $s\geq 1$. For $(i,j), (s,t)\in I_d$, the
following recurrence holds:
\begin{align*}
\Delta_{i,j}(s,t;d,n)
&=
q^{2i+3j}\Delta_{i,j}(s-1,t;d-1,n-1)\\
&\quad+
q^{i+2j-1}
\left(q^n+q^d-q^{i+j}-q^{i+j-1}\right)
\Delta_{i-1,j}(s-1,t;d-1,n-1)\\
&\quad+
q^{j-1}
\left(q^n-q^{i+j-1}\right)
\left(q^d-q^{i+j-1}\right)
\Delta_{i-2,j}(s-1,t;d-1,n-1)\\
&\quad-
q^{n+d+j-2}
\Delta_{i,j-1}(s-1,t;d-1,n-1).
\end{align*}
\end{lemma}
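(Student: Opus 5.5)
The plan is to split off the first row and column. Since $s\ge1$ and everything depends only on the type of $z$, I would choose
\[
z=\begin{pmatrix}p&0\\0&z'\end{pmatrix},
\]
with $z'\in\operatorname{Mat}_{(d-1)\times(n-1)}(R)$ of type $(s-1,t)$. Writing each $x\in X$ as $x=\begin{pmatrix}\rho&u^t\\ v&y\end{pmatrix}$, we get
\[
\psi(\operatorname{tr}(zx^t))=\psi(p\rho)\,\psi(\operatorname{tr}(z'y^t)).
\]
Grouping the $x\in X_{i,j}$ by $\rho$ and by the type $(k,l)$ of $y$ uses the counts $N_{i,j}(\rho;k,l)$ of Lemma~\ref{lem:N-well-defined}. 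This gives
\[
\Delta_{i,j}(s,t;d,n)=\sum_{(k,l)\in I_{d-1}} c_{k,l}\,\Delta_{k,l}(s-1,t;d-1,n-1),
\qquad
c_{k,l}=\sum_{\rho\in R}\psi(p\rho)\,N_{i,j}(\rho;k,l).
\]
By Corollary~\ref{cor:possible-subtypes}, only $(k,l)\in S_{i,j}$ contribute.

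Next I would evaluate $c_{k,l}$. The value $\psi(p\rho)$ depends only on $\rho \bmod p$, and $N_{i,j}(\rho;k,l)$ depends only on the class of $\rho$ among $\{0,p,1\}$. The number $0$ and the $q-1$ nonzero elements of $pR$ each give $\psi(p\rho)=1$. The $q(q-1)$ units have residues $a\ne0$, each residue occurring $q$ times. Because $\psi$ is nontrivial on $pR$, the map $a\mapsto\psi(p\widetilde a)$ is a nontrivial additive character of $\mathbb F_q$. Hence the units contribute $q\sum_{a\ne0}\psi(p\widetilde a)=-q$, and
\[
c_{k,l}=N_{i,j}(0;k,l)+(q-1)N_{i,j}(p;k,l)-q\,N_{i,j}(1;k,l).
\]
For the five subtypes $(i+1,j-1)$, $(i-1,j-1)$, $(i+2,j-2)$, $(i+1,j-2)$ and $(i,j-2)$, the counts are independent of $\rho$, so $c_{k,l}=1+(q-1)-q=0$. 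The four survivors are $(i,j)$, $(i-1,j)$, $(i-2,j)$ and $(i,j-1)$, which matches the four terms of the recurrence.

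It then remains to substitute Lemmas~\ref{lem:N-ij-ij}, \ref{lem:N-ij-i-1-j}, \ref{lem:N-ij-i-2-j} and \ref{lem:N-ij-i-j-1} and simplify.
\begin{itemize}
\item For $(i,j)$, the first two counts combine to $q^{i+2j-1}(q^{i+2j}+q^{i+j+1}-q^{i+j})$. Subtracting $q\cdot q^{2i+3j-2}(q^j-1)$ leaves exactly $q^{2i+3j}$.
\item For $(i-1,j)$, the constant terms $-q^{i+2j-2}(q-1)+(q-1)q^{i+2j-2}$ cancel. The rest is $q^{2i+3j-4}(q^{n-i-j+1}+q^{d-i-j+1}-q-1)$ times $(q^j+q-1)q-q(q^j-1)=q^2$. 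This yields $q^{i+2j-1}(q^n+q^d-q^{i+j}-q^{i+j-1})$.
\item For $(i-2,j)$, the same factor $q^2$ appears and gives $q^{j-1}(q^n-q^{i+j-1})(q^d-q^{i+j-1})$.
\item For $(i,j-1)$, the first two lines of the counts cancel in the same way. Only $q^{n+d+j-4}$ times $q(q-1)(2q^{i+j-1}-q^{j-1}-1)-q(2q^{i+j}-2q^{i+j-1}-q^j+q^{j-1}+1)=-q^2$ survives, giving $-q^{n+d+j-2}$.
\end{itemize}

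The main obstacle is the boundary bookkeeping, not the algebra. Section~5 assumes $d\ge2$, so the case $d=1$ needs separate treatment. There $s=1$, $t=0$ and $y$ is empty, and I would check it directly against the convention $\Delta_{0,0}(0,0;0,n-1)=1$. A direct check is needed because the only surviving term, $(i,j)=(0,0)$, gives $c_{0,0}=1+(q-1)-q=0$ rather than the coefficient $1$ in the recurrence.

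I also need the enumeration lemmas to hold at edge indices, such as $i+j=d$, where $(i,j)\notin I_{d-1}$. These cases correspond to terms with $\Delta=0$ by convention, and the apparent negative powers have to be handled as indicated in Section~5.
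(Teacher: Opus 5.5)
Your proposal is correct and follows the paper's proof step for step. It uses the same block choice of $z$ with $p$ in the corner, the same coefficient $N_{i,j}(0;k,l)+(q-1)N_{i,j}(p;k,l)-qN_{i,j}(1;k,l)$, the same five vanishing and four surviving subtypes with the same simplifications, and a separate direct check for $d=1$.

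One side remark is wrong, though the proof is unaffected. At $d=1$ one has $N_{0,0}(0;0,0)=1$ and $N_{0,0}(p;0,0)=N_{0,0}(1;0,0)=0$, so $c_{0,0}=1$ and the splitting argument in fact works there too. The separate check is needed only because Section~5 assumes $d\ge 2$.
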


\begin{proof}
We first treat the case $d=1$. Since $s\geq 1$ and
$(s,t)\in I_1$, we have $(s,t)=(1,0)$. Take
$z=(p,0,\ldots,0)\in R^n$.
Then $X=R^n$, and the three type classes are
\[
X_{0,0}=\{0\},\qquad
X_{1,0}=(pR)^n\setminus\{0\},\qquad
X_{0,1}=R^n\setminus(pR)^n.
\]
Moreover,
\[
\psi(\operatorname{tr}(zx^t))=\psi(px_1).
\]
Since $\psi(p\,\cdot)$ is a nontrivial additive character on $R/pR$,
we have
\[
\psi(0) = 1,\qquad
\sum_{x\in (pR)^n}\psi(px_1)=q^n, \qquad
\sum_{x\in R^n}\psi(px_1)=0.
\]
It follows that
\[
\Delta_{0,0}(1,0;1,n)=1,\qquad
\Delta_{1,0}(1,0;1,n)= q^n-1,\qquad
\Delta_{0,1}(1,0;1,n)=-q^n.
\]
On the other hand, for $d-1=0$, the only type is $(0,0)$, and
\[
\Delta_{0,0}(0,0;0,n-1)=1,
\]
while all other $\Delta$-terms vanish by convention. Substituting
$(i,j)=(0,0),(1,0),(0,1)$ into the stated recurrence gives,
respectively,
\[
1,\qquad q^n-1,\qquad -q^n.
\]
Hence the stated recurrence holds for $d=1$. We may therefore assume
$d\geq 2$.

Let
\[
z=
\operatorname{diag}(
\underbrace{p,\ldots,p}_{s},
\underbrace{1,\ldots,1}_{t},
0,\ldots,0)
\in X_{s,t},
\]
and let $z'$ be the submatrix of $z$ obtained by deleting both the
first row and the first column:
\[
z'=
\operatorname{diag}(
\underbrace{p,\ldots,p}_{s-1},
\underbrace{1,\ldots,1}_{t},
0,\ldots,0)
\in \operatorname{Mat}_{(d-1)\times(n-1)}(R).
\]
For each $x\in X_{i,j}$, write
\[
x=
\begin{pmatrix}
\rho & u^t\\
v & y
\end{pmatrix},
\]
where
\[
\rho\in R,\qquad
u\in R^{n-1},\qquad
v\in R^{d-1},\qquad
y\in\operatorname{Mat}_{(d-1)\times(n-1)}(R).
\]
Then
\[
\psi\left(\operatorname{tr}(zx^t)\right)
=
\psi(p\rho)\,
\psi\left(\operatorname{tr}(z' y^t)\right).
\]

By Corollary~\ref{cor:possible-subtypes}, only pairs
$(k,l)\in S_{i,j}$ can occur. Hence
\begin{align*}
\Delta_{i,j}(s,t;d,n)
&=
\sum_{x\in X_{i,j}}
\psi\left(\operatorname{tr}(zx^t)\right)\\
&=
\sum_{\rho\in R}
\sum_{(k,l)\in S_{i,j}}
\sum_{\substack{y\\ \operatorname{type}(y)=(k,l)}}
N_{i,j}(\rho;k,l)\,
\psi(p\rho)\,
\psi\left(\operatorname{tr}(z'y^t)\right)\\
&=
\sum_{\rho\in R}
\sum_{(k,l)\in S_{i,j}}
N_{i,j}(\rho;k,l)\,
\psi(p\rho)\,
\Delta_{k,l}(s-1,t;d-1,n-1).
\end{align*}
Here we used the fact that $N_{i,j}(\rho;k,l)$ depends only on the
type of $\rho$.

We now evaluate the sums over the three types of $\rho$. Since
$p^2=0$, we have $\psi(p\rho)=1$ for all $\rho\in pR$. Thus
\[
\sum_{\rho\in pR\setminus\{0\}}\psi(p\rho)=q-1.
\]
Moreover, by orthogonality of the nontrivial additive character on
$pR$,
\[
\sum_{\rho\in R\setminus pR}\psi(p\rho)=-q.
\]
Therefore
\[
\Delta_{i,j}(s,t;d,n)
=
\sum_{(k,l)\in S_{i,j}}
C_{i,j}(k,l)\,
\Delta_{k,l}(s-1,t;d-1,n-1),
\]
where
\[
C_{i,j}(k,l)
=
N_{i,j}(0;k,l)
+
(q-1)N_{i,j}(p;k,l)
-
qN_{i,j}(1;k,l).
\]

By Lemmas~\ref{lem:N-ij-i+1-j-1},
\ref{lem:N-ij-i-1-j-1},
\ref{lem:N-ij-i+2-j-2},
\ref{lem:N-ij-i+1-j-2}, and
\ref{lem:N-ij-i-j-2}, we have
\[
C_{i,j}(k,l)=0
\]
for
\[
(k,l)\in
\{(i+1,j-1),(i-1,j-1),(i+2,j-2),(i+1,j-2),(i,j-2)\},
\]
because in these cases $N_{i,j}(\rho;k,l)$ is independent of
$\rho$.

It remains to compute $C_{i,j}(k,l)$ for the four remaining pairs.
By Lemmas~\ref{lem:N-ij-ij}, \ref{lem:N-ij-i-1-j},
\ref{lem:N-ij-i-2-j}, and \ref{lem:N-ij-i-j-1}, we obtain
\[
\begin{aligned}
C_{i,j}(i,j)
&=q^{2i+3j},\\
C_{i,j}(i-1,j)
&= q^{i+2j-1}
\left(q^n+q^d-q^{i+j}-q^{i+j-1}\right),\\
C_{i,j}(i-2,j)
&=q^{j-1}
\left(q^n-q^{i+j-1}\right)
\left(q^d-q^{i+j-1}\right),\\
C_{i,j}(i,j-1)
&=
-q^{n+d+j-2}.
\end{aligned}
\]
Substituting these values into the preceding sum gives the desired
recurrence.
\end{proof}

\begin{lemma}
\label{lem:del_RR2}
Assume that $d \ge 1$ and $t\geq 1$. For $(i,j), (s,t)\in I_d$, the
following recurrence holds:
\[
\Delta_{i,j}(s,t;d,n)
=
q^{i+2j}\Delta_{i,j}(s,t-1;d-1,n-1)
-
q^{i+2j-1}\Delta_{i-1,j}(s,t-1;d-1,n-1).
\]
\end{lemma}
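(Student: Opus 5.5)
We follow the proof of Lemma~\ref{lem:del_RR1}, now peeling off a unit diagonal entry of $z$ instead of a $p$-entry. First we treat $d=1$ directly. Then $(s,t)=(0,1)$, and we take $z=(1,0,\ldots,0)$, so that $\psi(\operatorname{tr}(zx^t))=\psi(x_1)$. Using the type classes $X_{0,0}=\{0\}$, $X_{1,0}=(pR)^n\setminus\{0\}$ and $X_{0,1}=R^n\setminus(pR)^n$, we compute three sums. The sum of $\psi(x_1)$ over all of $R^n$ is $0$, since $\psi$ is nontrivial. Over $(pR)^n$ the sum is $q^{n-1}\sum_{\rho\in pR}\psi(\rho)=0$, because $\psi$ is nontrivial on $pR$. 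Hence
\[
\Delta_{0,0}(0,1;1,n)=1,\qquad \Delta_{1,0}(0,1;1,n)=-1,\qquad \Delta_{0,1}(0,1;1,n)=0 .
\]
These agree with the right-hand side of the recurrence, which gives $1$, $q\cdot 0-q^{0}\cdot 1=-1$, and $q^{2}\cdot 0-q\cdot 0=0$ respectively. So we may assume $d\ge2$.

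For $d\ge 2$ we choose the representative of $X_{s,t}$ differently: we put the unit entries first,
\[
z=\operatorname{diag}(1,\underbrace{p,\ldots,p}_{s},\underbrace{1,\ldots,1}_{t-1},0,\ldots,0),
\]
which is legitimate because $\Delta$ depends only on the type of $z$. Deleting the first row and column leaves $z'$ of type $(s,t-1)$. Write $x=\begin{pmatrix}\rho&u^t\\ v&y\end{pmatrix}$. Then $\psi(\operatorname{tr}(zx^t))=\psi(\rho)\,\psi(\operatorname{tr}(z'y^t))$. Lemma~\ref{lem:N-well-defined} shows that $N_{i,j}(\rho;k,l)$ depends only on the class of $\rho$ in $\{0,p,1\}$, and Corollary~\ref{cor:possible-subtypes} restricts $(k,l)$ to $S_{i,j}$. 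Together they give
\[
\Delta_{i,j}(s,t;d,n)=\sum_{(k,l)\in S_{i,j}}C'_{i,j}(k,l)\,\Delta_{k,l}(s,t-1;d-1,n-1).
\]
To find $C'_{i,j}$, we sum $\psi(\rho)$ over each class of $\rho$. For $\rho=0$ the sum is $1$. Over $pR\setminus\{0\}$ it is $-1$, because $\psi$ is nontrivial on $pR$. Over $R\setminus pR$ it is $0$, because the full sum over $R$ and the sum over $pR$ both vanish. Therefore
\[
C'_{i,j}(k,l)=N_{i,j}(0;k,l)-N_{i,j}(p;k,l).
\]
The unit class drops out entirely, which is why this recurrence is simpler than Lemma~\ref{lem:del_RR1}.

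We now read off $C'$ from the enumeration lemmas. Lemmas~\ref{lem:N-ij-i-2-j}, \ref{lem:N-ij-i-j-1}, \ref{lem:N-ij-i+1-j-1}, \ref{lem:N-ij-i-1-j-1}, \ref{lem:N-ij-i+2-j-2}, \ref{lem:N-ij-i+1-j-2} and \ref{lem:N-ij-i-j-2} each give the same value for $\rho=0$ and $\rho=p$, so $C'$ vanishes on those seven pairs. Lemma~\ref{lem:N-ij-ij} gives
\[
C'_{i,j}(i,j)=q^{i+2j-1}\cdot q=q^{i+2j}.
\]
Lemma~\ref{lem:N-ij-i-1-j} gives
\[
C'_{i,j}(i-1,j)=-q^{i+2j-2}(q-1)-q^{i+2j-2}=-q^{i+2j-1}.
\]
Substituting these two coefficients yields the stated recurrence.

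The main point needing care is the bookkeeping at the boundary. Terms with $(k,l)\notin I_{d-1}$ must be dropped consistently with the convention $\Delta=0$ outside $I_{d-1}$. When $i=0$ the $\Delta_{i-1,j}$ term must vanish, and Lemma~\ref{lem:N-ij-i-1-j} is then simply not invoked. When $(i,j)\notin I_{d-1}$, that is $i+j=d$, the $\Delta_{i,j}$ term on the right vanishes, consistent with Lemma~\ref{lem:N-ij-ij} being unavailable. We also need to confirm that the type criteria of Section~5, which were stated for a $y$ of type $(k,l)$, apply unchanged; this holds because Lemma~\ref{lem:N-well-defined} makes $N$ independent of the choice of representative, and the reordering of $z$ only affects the character, not the counting.
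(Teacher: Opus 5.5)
Your proposal is correct and follows the paper's proof essentially step for step. It uses the same $d=1$ base case with $z=(1,0,\ldots,0)$ and the same representative $z=\operatorname{diag}(1,p,\ldots,p,1,\ldots,1,0,\ldots,0)$ with $z'$ of type $(s,t-1)$. It also reduces to the same coefficient $N_{i,j}(0;k,l)-N_{i,j}(p;k,l)$, whose only nonzero values are $q^{i+2j}$ at $(i,j)$ and $-q^{i+2j-1}$ at $(i-1,j)$.
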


\begin{proof}
We first consider the case $d=1$. Since $t\geq 1$ and
$(s,t)\in I_1$, we have $(s,t)=(0,1)$. Take
$z=(1,0,\ldots,0)\in R^n$. Then $X=R^n$, and the three type
classes are
\[
X_{0,0}=\{0\},\qquad
X_{1,0}=(pR)^n\setminus\{0\},\qquad
X_{0,1}=R^n\setminus(pR)^n.
\]
Moreover,
\[
\psi(\operatorname{tr}(zx^t))=\psi(x_1).
\]
Hence
\[
\Delta_{0,0}(0,1;1,n)=1.
\]
Since $\psi$ is nontrivial on $pR$, we have
\[
\sum_{x\in(pR)^n}\psi(x_1)=0,
\]
and therefore
\[
\Delta_{1,0}(0,1;1,n)
=
\sum_{x\in(pR)^n\setminus\{0\}}\psi(x_1)
=
-1.
\]
Also, since $\psi$ is nontrivial on $R$,
\[
\sum_{x\in R^n}\psi(x_1)=0,
\]
and hence
\[
\Delta_{0,1}(0,1;1,n)
=
\sum_{x\in R^n\setminus(pR)^n}\psi(x_1)
=
0.
\]
On the other hand, for $d-1=0$, the only type is $(0,0)$, and
\[
\Delta_{0,0}(0,0;0,n-1)=1,
\]
while all other $\Delta$-terms vanish by convention. Substituting
$(i,j)=(0,0),(1,0),(0,1)$ into the stated recurrence gives,
respectively,
\[
1,\qquad -1,\qquad 0.
\]
Hence the stated recurrence holds for $d=1$. We may therefore assume
$d\geq 2$.

Let
\[
z=
\operatorname{diag}(
1,
\underbrace{p,\ldots,p}_{s},
\underbrace{1,\ldots,1}_{t-1},
0,\ldots,0)
\in X_{s,t},
\]
and let $z'$ be the submatrix of $z$ obtained by deleting both the
first row and the first column:
\[
z'=
\operatorname{diag}(
\underbrace{p,\ldots,p}_{s},
\underbrace{1,\ldots,1}_{t-1},
0,\ldots,0)
\in \operatorname{Mat}_{(d-1)\times(n-1)}(R).
\]
For each $x\in X_{i,j}$, write
\[
x=
\begin{pmatrix}
\rho & u^t\\
v & y
\end{pmatrix},
\]
where
\[
\rho\in R,\qquad
u\in R^{n-1},\qquad
v\in R^{d-1},\qquad
y\in\operatorname{Mat}_{(d-1)\times(n-1)}(R).
\]
Then
\[
\psi\left(\operatorname{tr}(zx^t)\right)
=
\psi(\rho)\,
\psi\left(\operatorname{tr}(z'y^t)\right).
\]
As in the proof of Lemma~\ref{lem:del_RR1}, using
Corollary~\ref{cor:possible-subtypes}, we get
\begin{align*}
\Delta_{i,j}(s,t;d,n)
&=
\sum_{\rho\in R}
\sum_{(k,l)\in S_{i,j}}
N_{i,j}(\rho;k,l)\,
\psi(\rho)\,
\Delta_{k,l}(s,t-1;d-1,n-1).
\end{align*}

Since $\psi$ is nontrivial on $pR$, we have
\[
\sum_{\rho\in pR\setminus\{0\}}\psi(\rho)=-1.
\]
Moreover, since $\psi$ is nontrivial on $R$ and
\[
\sum_{\rho\in pR}\psi(\rho)=0,
\]
we obtain
\[
\sum_{\rho\in R\setminus pR}\psi(\rho)=0.
\]
Thus
\[
\Delta_{i,j}(s,t;d,n)
=
\sum_{(k,l)\in S_{i,j}}
D_{i,j}(k,l)\,
\Delta_{k,l}(s,t-1;d-1,n-1),
\]
where
\[
D_{i,j}(k,l)
=
N_{i,j}(0;k,l)-N_{i,j}(p;k,l).
\]

By Lemmas~\ref{lem:N-ij-i-2-j}, \ref{lem:N-ij-i-j-1},
\ref{lem:N-ij-i+1-j-1}, \ref{lem:N-ij-i-1-j-1},
\ref{lem:N-ij-i+2-j-2}, \ref{lem:N-ij-i+1-j-2}, and
\ref{lem:N-ij-i-j-2}, we have
\[
D_{i,j}(k,l)=0
\]
for all pairs
\[
(k,l)\in S_{i,j}\setminus\{(i,j),(i-1,j)\}.
\]
By Lemmas~\ref{lem:N-ij-ij} and \ref{lem:N-ij-i-1-j}, we have
\[
D_{i,j}(i,j)=q^{i+2j},
\qquad
D_{i,j}(i-1,j)=-q^{i+2j-1}.
\]
Substituting these two nonzero values into the preceding sum gives
\[
\Delta_{i,j}(s,t;d,n)
=
q^{i+2j}\Delta_{i,j}(s,t-1;d-1,n-1)
-
q^{i+2j-1}\Delta_{i-1,j}(s,t-1;d-1,n-1),
\]
as required.
\end{proof}

\begin{proof}[Proof of Theorem~\ref{main}]
It suffices to show that
\[
\Delta_{i,j}(s,t;d,n)
=
K^{\mathrm{biaff}}_{i,j}(s,t;q^d,n;q)
\]
for all $0\le d\le n$ and $(i,j),(s,t)\in I_d$.
In the notation of Section~3, this means that we specialize
\[
(i_1,i_2)=(i,j),\qquad
(j_1,j_2)=(s,t),\qquad
a=q^d.
\]

We prove this identity by induction on $d$. For the induction, we
include the trivial case $d=0$. In this case $I_0=\{(0,0)\}$. By
the convention for the character sums,
\[
\Delta_{0,0}(0,0;0,n)=1.
\]
On the other hand, since the affine $q$-Krawtchouk polynomials are
regularized, the bivariate affine $q$-Krawtchouk polynomial is
defined at $a=q^0=1$. Hence, by Lemma~\ref{lem:biaff-initial}, we have
\[
K^{\mathrm{biaff}}_{0,0}(0,0;1,n;q)=1.
\]
Thus the desired identity holds for $d=0$.

Assume now that $d\geq 1$, and suppose that the identity has already
been proved for $d-1$ and for all admissible values of the second
parameter. Since $d\leq n$, we have $d-1\leq n-1$, so the induction
hypothesis applies to the pair $(d-1,n-1)$.

Fix $(s,t)\in I_d$, and consider the two families
\[
\left\{\Delta_{i,j}(s,t;d,n)\right\}_{(i,j)\in I_d},
\qquad
\left\{
K^{\mathrm{biaff}}_{i,j}(s,t;q^d,n;q)
\right\}_{(i,j)\in I_d}.
\]
We prove that these two families coincide.

If $(s,t)=(0,0)$, then the equality follows from
Lemmas~\ref{lem:Delta-initial} and \ref{lem:biaff-initial}.

Assume now that $s+t\geq 1$. If $s\geq 1$, then
\[
(s-1,t)\in I_{d-1}.
\]
The recurrence relations in Lemmas~\ref{lem:del_RR1} and
\ref{lem:biaff_RR1} express both families at $(s,t;d,n)$ in terms
of the corresponding values at
\[
(s-1,t;d-1,n-1).
\]
Terms with type indices outside $I_{d-1}$ vanish by convention for the character sums.
The corresponding polynomial terms also vanish: this is immediate from the defining convention if the indices are outside $I_{n-1}$, and otherwise follows from Lemma~\ref{lem:biaff-boundary-vanishing}.
Hence only terms with type indices in $I_{d-1}$ remain.
By the induction hypothesis, the remaining corresponding values are equal.
Therefore the desired equality follows in this case.

It remains to consider the case $s=0$. Since $s+t\geq 1$, we have
$t\geq 1$, and hence
\[
(0,t-1)\in I_{d-1}.
\]
The recurrence relations in Lemmas~\ref{lem:del_RR2} and
\ref{lem:biaff_RR2} express both families at $(0,t;d,n)$ in terms
of the corresponding values at
\[
(0,t-1;d-1,n-1).
\]
The same boundary argument applies here: terms with type indices outside $I_{d-1}$ vanish by convention for the character sums, and
the corresponding polynomial terms vanish by the defining convention
or by Lemma~\ref{lem:biaff-boundary-vanishing}. 
The same boundary argument and the induction hypothesis give the desired equality.

The induction hypothesis now gives the desired equality.
\end{proof}

\appendix

\section{Finite-field counts}
\label{app:finite-field-counts}

Throughout this appendix, all vector spaces are over the residue field $\mathbb{F}_q$ introduced in Section~5.
In this appendix, the symbols $k$ and $l$ denote arbitrary nonnegative dimensions and are not tied to any fixed type $(k,l)$ from Section~5.

As in Section~5, zero-dimensional vector spaces are allowed. Thus
$\mathbb{F}_q^0$ is understood to be the zero space, and any condition
requiring a nonzero vector in such a space gives no choices.
As in Subsection~\ref{subsec:enumeration-N}, apparent negative powers in boundary cases are understood after algebraic simplification.

\subsection*{One-block counts}
\begin{lemma}
\label{lem:nonzero-inner-product-count}
Let $k\geq 0$ and fix $\omega\in\mathbb{F}_q^\times$.
Then the number of pairs $(\xi,\eta)\in\mathbb{F}_q^k\times\mathbb{F}_q^k$
satisfying $\xi^t\eta=\omega$
is
\[
q^{k-1}(q^k-1).
\]
\end{lemma}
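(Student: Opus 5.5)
The plan is to fix $\xi$ and count the $\eta$ for which the linear functional $\eta\mapsto\xi^t\eta$ takes the value $\omega$, then sum over $\xi$. The case $k=0$ is handled separately: there $\mathbb{F}_q^0$ is the zero space and $\xi^t\eta=0\neq\omega$, so there are no solutions. This agrees with $q^{-1}(q^0-1)=0$, read after simplification as the appendix convention allows.

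For $k\ge1$, split according to whether $\xi=0$.

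\emph{Case $\xi=0$.} Here $\xi^t\eta=0\neq\omega$ for every $\eta$, so these pairs contribute nothing.

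\emph{Case $\xi\neq0$.} The map $\eta\mapsto\xi^t\eta$ is a nonzero linear functional $\mathbb{F}_q^k\to\mathbb{F}_q$. It is therefore surjective, with kernel of dimension $k-1$. Hence the solution set of $\xi^t\eta=\omega$ is a coset of this kernel and has exactly $q^{k-1}$ elements. Since there are $q^k-1$ nonzero vectors $\xi$, this case gives $q^{k-1}(q^k-1)$ pairs.

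Adding the two cases yields the stated count. I do not expect a genuine obstacle; the only point that needs care is the degenerate dimension $k=0$, which the appendix convention covers.
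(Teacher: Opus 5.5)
Your proposal is correct and follows the paper's proof essentially step for step. The paper also treats $k=0$ as immediate, rules out $\xi=0$, and counts $q^{k-1}$ solutions $\eta$ for each of the $q^k-1$ nonzero $\xi$ via the nonzero linear functional $\eta\mapsto\xi^t\eta$.
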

\begin{proof}
The case $k=0$ is immediate, so assume $k\geq1$.
If $\xi=0$, then $\xi^t\eta=0$ for every $\eta\in\mathbb{F}_q^k$,
and hence there is no solution since $\omega\neq 0$.

Now fix a nonzero vector $\xi\in\mathbb{F}_q^k$. The map
\[
\varphi_\xi:\mathbb{F}_q^k\longrightarrow \mathbb{F}_q,\qquad
\eta\longmapsto \xi^t\eta
\]
is a nonzero linear functional. Hence it is surjective, and its kernel
has dimension $k-1$. Therefore each fiber of $\varphi_\xi$ has cardinality
\[
|\ker \varphi_\xi|=q^{k-1}.
\]
In particular, the number of $\eta\in\mathbb{F}_q^k$ satisfying
$\xi^t\eta=\omega$ is $q^{k-1}$.

Since there are $q^k-1$ choices for nonzero $\xi$, the total number of
pairs $(\xi,\eta)$ is
\[
(q^k-1)q^{k-1}.
\]
\end{proof}

\begin{lemma}\label{lem:zero-inner-product-count}
Let $k\geq 0$.
Then the number of pairs $(\xi,\eta)\in\mathbb{F}_q^k\times\mathbb{F}_q^k$
satisfying $\xi^t\eta=0$
is
\[
q^{k-1}(q^k+q-1).
\]
\end{lemma}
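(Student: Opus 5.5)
We count the pairs by splitting according to whether $\xi$ is zero or not, in the same way as the proof of Lemma~\ref{lem:nonzero-inner-product-count}.
The case $k=0$ is immediate. There the only pair is $(0,0)$, and the formula gives $q^{-1}(1+q-1)=1$ after simplification, in line with the stated convention on apparent negative powers. So assume $k\ge1$.

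\emph{First count.} If $\xi=0$, then every $\eta\in\mathbb F_q^k$ satisfies $\xi^t\eta=0$. This gives $q^k$ pairs.

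\emph{Second count.} If $\xi\neq0$, the functional $\varphi_\xi:\eta\mapsto\xi^t\eta$ is nonzero. Its kernel therefore has dimension $k-1$, so there are exactly $q^{k-1}$ admissible $\eta$ for each such $\xi$. Since there are $q^k-1$ nonzero $\xi$, this case contributes $(q^k-1)q^{k-1}$ pairs.

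Adding the two counts gives
\[
q^k+(q^k-1)q^{k-1}=q^{k-1}\bigl(q+q^k-1\bigr)=q^{k-1}(q^k+q-1),
\]
which is the claimed formula.

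As a consistency check, the count of pairs with $\xi^t\eta=\omega$ for each of the $q-1$ nonzero values $\omega$ is given by Lemma~\ref{lem:nonzero-inner-product-count}. Subtracting these from the total $q^{2k}$ gives $q^{2k}-(q-1)q^{k-1}(q^k-1)$, which simplifies to the same expression.

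There is no real obstacle here. The only point needing care is the degenerate case $k=0$, where the formula is to be read after algebraic simplification.
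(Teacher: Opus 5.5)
Your argument is correct, but your main count is not the paper's route. The paper's proof is exactly your ``consistency check'': it sums Lemma~\ref{lem:nonzero-inner-product-count} over $\omega\in\mathbb{F}_q^\times$ to get $(q-1)q^{k-1}(q^k-1)$ pairs with $\xi^t\eta\neq0$, and subtracts this from $q^{2k}$.

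Your direct count of the $\omega=0$ fiber, split on $\xi=0$ versus $\xi\neq0$, is self-contained and does not use the preceding lemma. It also shows exactly where the formula differs from the $\omega\neq0$ case: the $\xi\neq0$ part contributes $q^{k-1}(q^k-1)$ in both cases, and the only difference is the extra $q^k$ pairs with $\xi=0$. The paper's complement argument instead reuses the previous lemma. It also sets up the ``total minus known fibers'' pattern that the next appendix counts (Lemmas~\ref{lem:not-equal-nonzero-inner-product-count} and~\ref{lem:nonzero-inner-product-total-count}) follow. Either gives a complete proof, and your treatment of $k=0$ is fine.
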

\begin{proof}
The case $k=0$ is immediate, so assume $k\geq1$.
For each $\omega\in\mathbb{F}_q^\times$, by Lemma~\ref{lem:nonzero-inner-product-count}, the number
of pairs $(\xi,\eta)\in\mathbb{F}_q^k\times\mathbb{F}_q^k$ satisfying $\xi^t\eta=\omega$
is
\[
q^{k-1}(q^k-1).
\]
Hence the number of pairs for which $\xi^t\eta$ is nonzero is
\[
\sum_{\omega\in\mathbb{F}_q^\times}
\#\{(\xi,\eta)\in\mathbb{F}_q^k\times\mathbb{F}_q^k:\xi^t\eta=\omega\}
=
(q-1)q^{k-1}(q^k-1).
\]
Since the total number of pairs in
$\mathbb{F}_q^k\times\mathbb{F}_q^k$ is $q^{2k}$, the number of pairs
satisfying $\xi^t\eta=0$ is
\[
q^{2k}-(q-1)q^{k-1}(q^k-1) =
q^{k-1}(q^k+q-1).
\]
\end{proof}

\begin{lemma}
\label{lem:not-equal-nonzero-inner-product-count}
Let $k\geq 0$ and fix $\omega\in\mathbb{F}_q^\times$.
Then the number of pairs $(\xi,\eta)\in\mathbb{F}_q^k\times\mathbb{F}_q^k$
satisfying $\xi^t\eta \neq \omega$
is
\[
q^{k-1}(q^{k+1}-q^k+1).
\]
\end{lemma}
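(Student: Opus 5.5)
The count is obtained as the complement of the count in Lemma~\ref{lem:nonzero-inner-product-count}. First dispose of $k=0$ separately. In that case the only pair is $(0,0)$ and $\xi^t\eta=0\neq\omega$, so there is exactly one pair. The formula gives $q^{-1}(q-1+1)=1$, so it holds after simplification, in keeping with the convention on apparent negative powers.

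For $k\geq 1$, the total number of pairs in $\mathbb F_q^k\times\mathbb F_q^k$ is $q^{2k}$. By Lemma~\ref{lem:nonzero-inner-product-count}, exactly $q^{k-1}(q^k-1)$ of them satisfy $\xi^t\eta=\omega$. Hence the desired number is
\[
q^{2k}-q^{k-1}(q^k-1)=q^{k-1}\bigl(q^{k+1}-q^k+1\bigr).
\]
The last equality follows by factoring out $q^{k-1}$.

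Nothing here is an obstacle. The only point needing care is to check the boundary case $k=0$ against the formula, as above.
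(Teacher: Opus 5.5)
Your proof is correct and follows essentially the same route as the paper: you treat $k=0$ separately, then subtract the count $q^{k-1}(q^k-1)$ from Lemma~\ref{lem:nonzero-inner-product-count} from the total $q^{2k}$. Your explicit check that the formula gives $1$ at $k=0$ is a small addition to what the paper calls ``immediate.''
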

\begin{proof}
The case $k=0$ is immediate, so assume $k\geq1$.
The total number of pairs
$(\xi,\eta)\in\mathbb{F}_q^k\times\mathbb{F}_q^k$ is $q^{2k}$.
On the other hand, by Lemma~\ref{lem:nonzero-inner-product-count},
the number of pairs satisfying $\xi^t\eta=\omega$
is
\[
q^{k-1}(q^k-1).
\]
Therefore the number of pairs satisfying $\xi^t\eta\neq\omega$ is
\[
q^{2k}-q^{k-1}(q^k-1) = q^{k-1}(q^{k+1}-q^k+1).
\]
\end{proof}

\begin{lemma}
\label{lem:nonzero-inner-product-total-count}
Let $k\geq 0$.
Then the number of pairs $(\xi,\eta)\in\mathbb{F}_q^k\times\mathbb{F}_q^k$
satisfying $\xi^t\eta \neq 0$
is
\[
q^{k-1}(q^k-1)(q-1).
\]
\end{lemma}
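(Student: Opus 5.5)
The count follows by partitioning the pairs according to the value of $\xi^t\eta$ and using Lemma~\ref{lem:nonzero-inner-product-count}. I first dispose of the case $k=0$. Then $\mathbb{F}_q^0\times\mathbb{F}_q^0$ consists of the single pair $(0,0)$, whose inner product is $0$, so there are no pairs with $\xi^t\eta\neq0$. The formula also gives $q^{-1}(q^0-1)(q-1)=0$, in accordance with the convention that apparent negative powers are read after simplification. So assume $k\geq1$ from now on.

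For $k\geq1$, the set of pairs with $\xi^t\eta\neq0$ is the disjoint union, over $\omega\in\mathbb{F}_q^\times$, of the fibres $\{(\xi,\eta):\xi^t\eta=\omega\}$. By Lemma~\ref{lem:nonzero-inner-product-count}, each fibre has exactly $q^{k-1}(q^k-1)$ elements, and this number does not depend on $\omega$. There are $q-1$ choices of $\omega$, so summing the fibres gives $(q-1)q^{k-1}(q^k-1)$, which is the claimed formula.

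This computation was already carried out in the proof of Lemma~\ref{lem:zero-inner-product-count}, which used it as an intermediate step. As a check, the complementary count $q^{2k}-q^{k-1}(q^k+q-1)$ from that lemma simplifies to the same quantity. There is no real obstacle here; the only point that needs care is the degenerate case $k=0$.
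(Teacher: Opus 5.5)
Your argument is correct. Summing the fibres of Lemma~\ref{lem:nonzero-inner-product-count} over the $q-1$ values $\omega\in\mathbb{F}_q^\times$ gives the count directly, and your treatment of $k=0$ is fine. The paper instead takes the complement, computing $q^{2k}-q^{k-1}(q^k+q-1)$ via Lemma~\ref{lem:zero-inner-product-count}. That lemma was itself derived from exactly your fibre sum, so the two routes amount to the same computation; yours just skips the detour through the zero count, which you already note as a check.
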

\begin{proof}
The case $k=0$ is immediate, so assume $k\geq1$.
The total number of pairs
$(\xi,\eta)\in\mathbb{F}_q^k\times\mathbb{F}_q^k$ is $q^{2k}$.
By Lemma~\ref{lem:zero-inner-product-count}, the number of pairs satisfying $\xi^t\eta=0$
is
\[
q^{k-1}(q^k+q-1).
\]
Therefore the number of pairs satisfying $\xi^t\eta\neq 0$ is
\[
q^{2k}-q^{k-1}(q^k+q-1) = q^{k-1}(q^k-1)(q-1).
\]
\end{proof}

\begin{lemma}
\label{lem:nonzero-vectors-zero-inner-product-count}
Let $k\geq 0$.
Then the number of pairs
$(\xi,\eta)\in\mathbb{F}_q^k\times\mathbb{F}_q^k$
satisfying
$\xi\neq 0$, $\eta\neq 0$, $\xi^t\eta=0$
is
\[
(q^k-1)(q^{k-1}-1).
\]
\end{lemma}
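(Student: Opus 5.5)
We count the set of pairs $(\xi,\eta)\in\mathbb{F}_q^k\times\mathbb{F}_q^k$ with $\xi\neq0$, $\eta\neq0$, $\xi^t\eta=0$ directly by fibering over $\xi$, in the same style as the proof of Lemma~\ref{lem:nonzero-inner-product-count}. The case $k=0$ is immediate: there are no nonzero vectors in $\mathbb{F}_q^0$, so the count is $0$, which agrees with $(q^0-1)(q^{-1}-1)=0$ under the convention that apparent negative powers are interpreted after simplification (the factor $q^0-1$ vanishes). So we assume $k\geq1$.

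For $k\geq1$, fix a nonzero $\xi\in\mathbb{F}_q^k$; there are $q^k-1$ such choices. The map $\varphi_\xi:\eta\mapsto\xi^t\eta$ is a nonzero linear functional, so its kernel has dimension $k-1$ and cardinality $q^{k-1}$, exactly as in the proof of Lemma~\ref{lem:nonzero-inner-product-count}. The kernel always contains $\eta=0$, so the number of nonzero $\eta$ with $\xi^t\eta=0$ is $q^{k-1}-1$. This number is independent of $\xi$, so multiplying gives $(q^k-1)(q^{k-1}-1)$.

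As a consistency check, we can cross-check against Lemma~\ref{lem:zero-inner-product-count}. That lemma gives the total $q^{k-1}(q^k+q-1)$ of pairs with $\xi^t\eta=0$. From it we subtract the pairs with $\xi=0$ or $\eta=0$, of which there are $2q^k-1$. The result is $q^{2k-1}+q^k-q^{k-1}-2q^k+1=(q^k-1)(q^{k-1}-1)$.

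There is no real obstacle here. The only point requiring care is the boundary case $k=1$: there the kernel is $\{0\}$, and the formula correctly gives $0$, which the fibering argument already handles.
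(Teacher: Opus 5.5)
Your proof is correct, and your main argument differs slightly from the paper's. You fiber over the $q^k-1$ nonzero vectors $\xi$ and count the $q^{k-1}-1$ nonzero vectors in $\ker\varphi_\xi$, so the product $(q^k-1)(q^{k-1}-1)$ comes out directly. The paper's proof is exactly your ``consistency check'': it takes the total $q^{k-1}(q^k+q-1)$ from Lemma~\ref{lem:zero-inner-product-count}, removes the $2q^k-1$ pairs with $\xi=0$ or $\eta=0$, and simplifies. Your fibering argument is self-contained and explains why the answer factors. The paper's route instead follows the appendix's habit of deriving each count from earlier lemmas, which costs it an algebraic simplification. Either argument is fine, and your handling of the boundary cases $k=0$ and $k=1$ is sound.
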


\begin{proof}
The case $k=0$ is immediate, so assume $k\geq1$.
By Lemma~\ref{lem:zero-inner-product-count}, the number of pairs
$(\xi,\eta)\in\mathbb{F}_q^k\times\mathbb{F}_q^k$
satisfying
$\xi^t\eta=0$
is
\[
q^{k-1}(q^k+q-1).
\]

Among these pairs, those with $\xi=0$ are obtained by choosing
$\eta\in\mathbb{F}_q^k$ arbitrarily, and hence there are $q^k$ such
pairs. Similarly, there are $q^k$ pairs with $\eta=0$. The pair
$(0,0)$ has been counted twice, so the number of pairs satisfying
$\xi^t\eta=0$
and at least one of $\xi,\eta$ is zero is
$2q^k-1.$
Therefore the number of pairs satisfying
$\xi\neq 0$, $\eta\neq 0$, $\xi^t\eta=0$
is
\[
q^{k-1}(q^k+q-1)-(2q^k-1)
=
(q^k-1)(q^{k-1}-1).
\]
\end{proof}

\subsection*{Two-block counts}

\begin{lemma}
\label{lem:two-block-not-equal-nonzero-inner-product-count}
Let $k,l\geq 0$ and fix $\omega\in\mathbb{F}_q^\times$.
Then the number of quadruples
$(\xi_1,\eta_1,\xi_2,\eta_2)
\in
\mathbb{F}_q^k\times\mathbb{F}_q^k
\times
\mathbb{F}_q^l\times\mathbb{F}_q^l$
satisfying
$\xi_1^t\eta_1+\xi_2^t\eta_2\neq \omega$
is
\[
q^{k+l-1}(q^{k+l+1}-q^{k+l}+1).
\]
\end{lemma}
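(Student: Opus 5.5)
The plan is to count the complement, mirroring the one-block Lemma~\ref{lem:not-equal-nonzero-inner-product-count}. The key observation is that the two-block expression $\xi_1^t\eta_1+\xi_2^t\eta_2$ is just the standard bilinear form on $\mathbb{F}_q^{k+l}$, evaluated at the concatenated vectors $\xi=(\xi_1,\xi_2)$ and $\eta=(\eta_1,\eta_2)$. Concretely, the map
\[
(\xi_1,\eta_1,\xi_2,\eta_2)\longmapsto \bigl((\xi_1,\xi_2),(\eta_1,\eta_2)\bigr)
\]
is a bijection $\mathbb{F}_q^k\times\mathbb{F}_q^k\times\mathbb{F}_q^l\times\mathbb{F}_q^l\to\mathbb{F}_q^{k+l}\times\mathbb{F}_q^{k+l}$. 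Under this bijection, $\xi_1^t\eta_1+\xi_2^t\eta_2=\xi^t\eta$.

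The first step is therefore to note that the number of quadruples with $\xi_1^t\eta_1+\xi_2^t\eta_2\neq\omega$ equals the number of pairs $(\xi,\eta)\in\mathbb{F}_q^{k+l}\times\mathbb{F}_q^{k+l}$ with $\xi^t\eta\neq\omega$.

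The second step is to apply Lemma~\ref{lem:not-equal-nonzero-inner-product-count} in dimension $k+l$. This gives
\[
q^{k+l-1}(q^{k+l+1}-q^{k+l}+1),
\]
which is the asserted count.

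The only point requiring care is the degenerate case $k=l=0$. Here the sum is $0\neq\omega$, so the single empty quadruple is counted, giving $1$. The formula gives $q^{-1}(q-1+1)=1$, in agreement and consistent with the appendix's convention on apparent negative powers. The cases where exactly one of $k,l$ is zero need no separate treatment, since the bijection covers them.

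No real obstacle is expected: the content is entirely in identifying the two-block form with a single inner product of dimension $k+l$. If one preferred to avoid the reduction, a direct alternative is to count the complement. By Lemma~\ref{lem:nonzero-inner-product-count} in dimension $k+l$, exactly $q^{k+l-1}(q^{k+l}-1)$ quadruples satisfy $\xi^t\eta=\omega$. Subtracting this from $q^{2(k+l)}$ gives the same result.
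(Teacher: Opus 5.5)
Your proposal is correct and follows the paper's proof: the paper also concatenates $(\xi_1,\xi_2)$ and $(\eta_1,\eta_2)$ into vectors of $\mathbb{F}_q^{k+l}$ and applies Lemma~\ref{lem:not-equal-nonzero-inner-product-count} in dimension $k+l$. It likewise treats $k=l=0$ separately as immediate, which your check that the formula gives $1$ confirms.
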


\begin{proof}
The case $k=l=0$ is immediate, so assume $k+l\geq1$.
Set
\[
\xi=
\begin{pmatrix}
\xi_1\\
\xi_2
\end{pmatrix},
\qquad
\eta=
\begin{pmatrix}
\eta_1\\
\eta_2
\end{pmatrix}
\]
as vectors in $\mathbb{F}_q^{k+l}$. Then
\[
\xi^t\eta=\xi_1^t\eta_1+\xi_2^t\eta_2.
\]
Therefore the desired number is the number of pairs
$(\xi,\eta)\in \mathbb{F}_q^{k+l}\times \mathbb{F}_q^{k+l}$
satisfying $\xi^t\eta\neq\omega$. Applying
Lemma~\ref{lem:not-equal-nonzero-inner-product-count} in dimension $k+l$, we obtain
\[
q^{k+l-1}(q^{k+l+1}-q^{k+l}+1).
\]
\end{proof}

\begin{lemma}
\label{lem:two-block-nonzero-inner-product-total-count}
Let $k,l\geq 0$.
Then the number of quadruples
$(\xi_1,\eta_1,\xi_2,\eta_2)
\in
\mathbb{F}_q^k\times\mathbb{F}_q^k
\times
\mathbb{F}_q^l\times\mathbb{F}_q^l$
satisfying
$\xi_1^t\eta_1+\xi_2^t\eta_2\neq 0$
is
\[
q^{k+l-1}(q^{k+l}-1)(q-1).
\]
\end{lemma}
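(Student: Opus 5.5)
The plan is to reduce this two-block count to the one-block count of Lemma~\ref{lem:nonzero-inner-product-total-count} by concatenating vectors, exactly as in the proof of Lemma~\ref{lem:two-block-not-equal-nonzero-inner-product-count}. The degenerate case $k=l=0$ is immediate: the only quadruple is the zero one, the sum vanishes, and the claimed formula gives $q^{-1}(1-1)(q-1)=0$. Hence we may assume $k+l\geq 1$.

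Set
\[
\xi=\begin{pmatrix}\xi_1\\ \xi_2\end{pmatrix},\qquad
\eta=\begin{pmatrix}\eta_1\\ \eta_2\end{pmatrix}\in\mathbb{F}_q^{k+l}.
\]
The map $(\xi_1,\eta_1,\xi_2,\eta_2)\mapsto(\xi,\eta)$ is a bijection
\[
\mathbb{F}_q^k\times\mathbb{F}_q^k\times\mathbb{F}_q^l\times\mathbb{F}_q^l\;\longrightarrow\;\mathbb{F}_q^{k+l}\times\mathbb{F}_q^{k+l}.
\]
It satisfies $\xi^t\eta=\xi_1^t\eta_1+\xi_2^t\eta_2$. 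Therefore the set being counted corresponds exactly to the set of pairs $(\xi,\eta)$ with $\xi^t\eta\neq0$.

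Applying Lemma~\ref{lem:nonzero-inner-product-total-count} in dimension $k+l$ then gives the count
\[
q^{k+l-1}(q^{k+l}-1)(q-1),
\]
which is the stated value.

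There is no substantive obstacle in this argument. The only points needing care are that the concatenation is a bijection and that the boundary case is consistent with the convention on apparent negative powers, both of which are routine.
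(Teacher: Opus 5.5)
Your proposal is correct and matches the paper's proof. Both concatenate $(\xi_1,\xi_2)$ and $(\eta_1,\eta_2)$ into vectors of $\mathbb{F}_q^{k+l}$, treat $k=l=0$ separately, and apply Lemma~\ref{lem:nonzero-inner-product-total-count} in dimension $k+l$.
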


\begin{proof}
The case $k=l=0$ is immediate, so assume $k+l\geq1$.
Set
\[
\xi=
\begin{pmatrix}
\xi_1\\
\xi_2
\end{pmatrix},
\qquad
\eta=
\begin{pmatrix}
\eta_1\\
\eta_2
\end{pmatrix}
\]
as vectors in $\mathbb{F}_q^{k+l}$. Then
\[
\xi^t\eta=\xi_1^t\eta_1+\xi_2^t\eta_2.
\]
Applying Lemma~\ref{lem:nonzero-inner-product-total-count} in dimension $k+l$ gives
\[
q^{k+l-1}(q^{k+l}-1)(q-1).
\]
\end{proof}

\begin{lemma}
\label{lem:two-block-fixed-sum-eta-one-nonzero-count}
Let $k, l\geq 0$ and fix $\omega\in\mathbb{F}_q$.
Then the number of quadruples
$(\xi_1,\eta_1,\xi_2,\eta_2)
\in
\mathbb{F}_q^k\times\mathbb{F}_q^k
\times
\mathbb{F}_q^l\times\mathbb{F}_q^l$
satisfying
$\xi_1^t\eta_1+\xi_2^t\eta_2=\omega$
and $\eta_1\neq 0$ is
\[
q^{k+2l-1}(q^k-1).
\]
\end{lemma}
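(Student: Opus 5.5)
The plan is to fix $\eta_1\neq 0$ first and count the remaining variables fibrewise over a linear functional, in the same spirit as Lemma~\ref{lem:nonzero-inner-product-count}. The count is then a product of the number of admissible $\eta_1$ and the size of a fibre of a surjective linear map.

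We may assume $k\geq 1$. If $k=0$, the condition $\eta_1\neq0$ has no solutions, and the formula gives $q^{2l-1}(1-1)=0$, interpreted after simplification as in the appendix conventions. So fix $\eta_1\in\mathbb F_q^k\setminus\{0\}$, together with arbitrary $(\xi_2,\eta_2)\in\mathbb F_q^l\times\mathbb F_q^l$. The condition then becomes
\[
\xi_1^t\eta_1=\omega-\xi_2^t\eta_2 .
\]
Since $\eta_1\neq0$, the map $\xi_1\mapsto\xi_1^t\eta_1$ is a nonzero linear functional on $\mathbb F_q^k$. It is therefore surjective with kernel of dimension $k-1$, so every fibre has exactly $q^{k-1}$ elements. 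Hence, whatever value $\omega-\xi_2^t\eta_2$ takes, there are precisely $q^{k-1}$ choices of $\xi_1$.

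Multiplying the factors gives the count. There are $q^k-1$ choices for $\eta_1$, $q^{2l}$ choices for $(\xi_2,\eta_2)$, and $q^{k-1}$ choices for $\xi_1$. The total is
\[
(q^k-1)\,q^{2l}\,q^{k-1}=q^{k+2l-1}(q^k-1),
\]
which is the stated formula. This holds uniformly in $\omega$, including $\omega=0$.

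There is no real obstacle here. The only points needing care are the degenerate case $k=0$ and the observation that the fibre size does not depend on the target value, which is what makes the answer independent of $\omega$.
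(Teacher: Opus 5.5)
Your proposal is correct and follows essentially the same argument as the paper. The paper also sets aside $k=0$, then fixes $\eta_1\neq0$ and $(\xi_2,\eta_2)$, and uses the $q^{k-1}$-element fibres of $\xi_1\mapsto\xi_1^t\eta_1$ to obtain $(q^k-1)\,q^{2l}\,q^{k-1}$.
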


\begin{proof}
The case $k=0$ is immediate, so assume $k\geq1$.
Fix $\eta_1\in\mathbb{F}_q^k\setminus\{0\}$ and
$(\xi_2,\eta_2)\in\mathbb{F}_q^l\times\mathbb{F}_q^l$. Then the equation
\[
\xi_1^t\eta_1+\xi_2^t\eta_2=\omega
\]
is equivalent to
\[
\xi_1^t\eta_1=\omega-\xi_2^t\eta_2.
\]
Since $\eta_1\neq 0$, the map
\[
\mathbb{F}_q^k\longrightarrow \mathbb{F}_q,
\qquad
\xi_1\longmapsto \xi_1^t\eta_1
\]
is a nonzero linear functional. Hence each fiber has cardinality
$q^{k-1}$. Therefore, for each fixed choice of $\eta_1,\xi_2,\eta_2$,
there are $q^{k-1}$ choices of $\xi_1$.

There are $q^k-1$ choices for $\eta_1\neq 0$, and $q^{2l}$ choices
for $(\xi_2,\eta_2)$. Hence the desired number is
\[
(q^k-1)q^{2l}q^{k-1}
=
q^{k+2l-1}(q^k-1).
\]
\end{proof}

\begin{lemma}
\label{lem:two-block-fixed-sum-xi-one-nonzero-count}
Let $k,l\geq 0$ and fix $\omega\in\mathbb{F}_q$.
Then the number of quadruples
$(\xi_1,\eta_1,\xi_2,\eta_2)
\in
\mathbb{F}_q^k\times\mathbb{F}_q^k
\times
\mathbb{F}_q^l\times\mathbb{F}_q^l$
satisfying
$\xi_1^t\eta_1+\xi_2^t\eta_2=\omega$
and $\xi_1\neq 0$ is
\[
q^{k+2l-1}(q^k-1).
\]
\end{lemma}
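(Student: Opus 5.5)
The statement is the mirror image of Lemma~\ref{lem:two-block-fixed-sum-eta-one-nonzero-count}: the nonzero vector is now the left factor $\xi_1$ of the first inner product rather than the right factor $\eta_1$. The plan is to repeat that fibre-counting argument with the roles of $\xi_1$ and $\eta_1$ exchanged. Equivalently, one can apply that lemma to the substitution $(\xi_1,\eta_1)\mapsto(\eta_1,\xi_1)$, using the symmetry $\xi_1^t\eta_1=\eta_1^t\xi_1$ over $\mathbb F_q$. I would write out the direct argument for completeness.

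First, dispose of the case $k=0$. Here $\mathbb F_q^0$ is the zero space, so the condition $\xi_1\neq0$ admits no choices. The count is then $0$, which agrees with the formula because of the factor $q^k-1=0$; the apparent power $q^{2l-1}$ is harmless, in line with the convention of the appendix.

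Now assume $k\ge1$. Fix $\xi_1\in\mathbb F_q^k\setminus\{0\}$ and $(\xi_2,\eta_2)\in\mathbb F_q^l\times\mathbb F_q^l$. The equation becomes
\[
\xi_1^t\eta_1=\omega-\xi_2^t\eta_2,
\]
and the map $\eta_1\mapsto\xi_1^t\eta_1$ is a nonzero linear functional on $\mathbb F_q^k$. It is therefore surjective with kernel of dimension $k-1$, so the prescribed value is attained by exactly $q^{k-1}$ vectors $\eta_1$, whatever $\omega$ and $(\xi_2,\eta_2)$ are.

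Multiplying over the $q^k-1$ choices of $\xi_1$ and the $q^{2l}$ choices of $(\xi_2,\eta_2)$ gives
\[
(q^k-1)\,q^{2l}\,q^{k-1}=q^{k+2l-1}(q^k-1),
\]
as claimed. There is no real obstacle here. The only point needing care is the boundary case $k=0$, which must be read after simplification as the appendix's convention specifies.
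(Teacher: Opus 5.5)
Your proof is correct. The paper proves this statement in one line, using the bijection $(\xi_1,\eta_1,\xi_2,\eta_2)\mapsto(\eta_1,\xi_1,\xi_2,\eta_2)$ to reduce it to the preceding lemma, which is the reduction you mention; your written-out fibre count is that lemma's proof with the roles of $\xi_1$ and $\eta_1$ exchanged, so the two approaches are essentially the same.
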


\begin{proof}
The bijection
\[
(\xi_1,\eta_1,\xi_2,\eta_2) \longmapsto (\eta_1,\xi_1,\xi_2,\eta_2)
\]
preserves $\xi_1^t\eta_1+\xi_2^t\eta_2$ and exchanges the conditions $\xi_1\neq0$ and $\eta_1\neq0$.
The result therefore follows from Lemma~\ref{lem:two-block-fixed-sum-eta-one-nonzero-count}.
\end{proof}

\subsection*{Sixtuple counts}

\begin{lemma}
\label{lem:sixtuple-zero-cross-fixed-nonzero-sum-count}
Let $k,l\geq 0$ and fix $\omega\in\mathbb{F}_q^\times$.
Then the number of sixtuples
$(\xi_1,\eta_1,\xi_2,\eta_2,\xi_3,\eta_3)
\in
\mathbb{F}_q^k\times\mathbb{F}_q^k
\times
\mathbb{F}_q^l\times\mathbb{F}_q^l
\times
\mathbb{F}_q^l\times\mathbb{F}_q^l$
satisfying
$\xi_2^t\eta_3 = 0$ and
$\xi_1^t\eta_1+\xi_2^t\eta_2+\xi_3^t\eta_3=\omega$
is
\[
q^{k+2l-1}(q^{k+2l-1}+q^{k+l}-q^{k+l-1}-1).
\]
\end{lemma}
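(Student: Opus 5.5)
We count by conditioning on $(\xi_2,\eta_3)\in\mathbb F_q^l\times\mathbb F_q^l$ with $\xi_2^t\eta_3=0$. Lemma~\ref{lem:zero-inner-product-count} (dimension $l$) says there are $q^{l-1}(q^l+q-1)$ such pairs. Once $(\xi_2,\eta_3)$ is fixed, the remaining variables $(\xi_1,\eta_1,\eta_2,\xi_3)$ must satisfy
\[
\xi_1^t\eta_1+\xi_2^t\eta_2+\xi_3^t\eta_3=\omega .
\]
For fixed $\xi_2,\eta_3$ this is a condition on $(\xi_1,\eta_1)$ (a quadratic form) plus a linear form in $(\eta_2,\xi_3)$. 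We split according to whether that linear form vanishes.

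\emph{Case $(\xi_2,\eta_3)\neq(0,0)$.} The map $(\eta_2,\xi_3)\mapsto \xi_2^t\eta_2+\xi_3^t\eta_3$ is a nonzero linear functional on $\mathbb F_q^{2l}$. So for any fixed $(\xi_1,\eta_1)$, exactly $q^{2l-1}$ pairs $(\eta_2,\xi_3)$ solve the equation. Letting $(\xi_1,\eta_1)$ range freely gives $q^{2k}\cdot q^{2l-1}$ solutions for each such $(\xi_2,\eta_3)$. The number of such pairs is $q^{l-1}(q^l+q-1)-1$.

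\emph{Case $\xi_2=\eta_3=0$.} The condition reduces to $\xi_1^t\eta_1=\omega$, and $\eta_2,\xi_3$ are free. Lemma~\ref{lem:nonzero-inner-product-count} (dimension $k$) gives $q^{k-1}(q^k-1)$ choices of $(\xi_1,\eta_1)$, so this case contributes $q^{2l}\,q^{k-1}(q^k-1)$.

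Adding the two cases gives the total
\[
q^{2k+2l-1}\bigl(q^{l-1}(q^l+q-1)-1\bigr)+q^{k+2l-1}(q^k-1).
\]
It remains to check that this equals the stated closed form. Factor out $q^{k+2l-1}$; the bracket becomes
\[
q^{k}\bigl(q^{2l-1}+q^{l}-q^{l-1}-1\bigr)+q^k-1
= q^{k+2l-1}+q^{k+l}-q^{k+l-1}-1,
\]
which is exactly the claimed formula.

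The boundary cases $k=0$ or $l=0$ need a separate check. When $l=0$, only the second case occurs, and the total is $q^{k-1}(q^k-1)$. The claimed formula gives $q^{k-1}(q^{k-1}+q^k-q^{k-1}-1)$, which agrees. When $k=0$, the formula of Lemma~\ref{lem:nonzero-inner-product-count} gives $0$, which is correct. The main thing to watch is the algebraic simplification, together with the convention that apparent negative powers cancel; I expect no genuine obstacle.
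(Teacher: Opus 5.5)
Your proposal is correct and follows the paper's proof essentially step for step. You condition on the pairs $(\xi_2,\eta_3)$ with $\xi_2^t\eta_3=0$, count the zero pair via the $\xi_1^t\eta_1=\omega$ count, and use the nonzero linear functional in $(\eta_2,\xi_3)$ for the remaining $q^{l-1}(q^l+q-1)-1$ pairs, arriving at the same algebraic simplification. Your explicit checks of the cases $k=0$ and $l=0$ are a minor addition; the paper only singles out $k=l=0$ as immediate.
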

\begin{proof}
The case $k=l=0$ is immediate, so assume $k+l\geq1$.
We first fix a pair $(\xi_2,\eta_3)\in \mathbb{F}_q^l\times\mathbb{F}_q^l$
satisfying $\xi_2^t\eta_3=0$, and count the possible choices of
$(\xi_1,\eta_1,\eta_2,\xi_3)$.

We distinguish two cases. First suppose that $(\xi_2,\eta_3)=(0,0)$.
Then the second condition reduces to
\[
\xi_1^t\eta_1=\omega.
\]
By Lemma~\ref{lem:nonzero-inner-product-count}, the number of choices of
$(\xi_1,\eta_1)\in \mathbb{F}_q^k\times\mathbb{F}_q^k$
is
\[
q^{k-1}(q^k-1).
\]
The vectors $\eta_2$ and $\xi_3$ are arbitrary, giving $q^{2l}$
choices. Hence this case contributes
\[
q^{2l}q^{k-1}(q^k-1).
\]

Next suppose that $(\xi_2,\eta_3)\neq (0,0)$.
For fixed $(\xi_1,\eta_1)$, the second condition is equivalent to
\[
\xi_2^t\eta_2+\xi_3^t\eta_3
=
\omega-\xi_1^t\eta_1.
\]
Since $(\xi_2,\eta_3)\neq(0,0)$, the map
\[
\mathbb{F}_q^l\times\mathbb{F}_q^l \longrightarrow \mathbb{F}_q,
\qquad
(\eta_2,\xi_3)\longmapsto \xi_2^t\eta_2+\xi_3^t\eta_3
\]
is a nonzero linear functional. Therefore each fiber has cardinality
\[
q^{2l-1}.
\]
Thus, for each fixed $(\xi_1,\eta_1)$, there are $q^{2l-1}$ choices
of $(\eta_2,\xi_3)$. Since $(\xi_1,\eta_1)$ is arbitrary, this gives
\[
q^{2k}q^{2l-1}
\]
choices for each pair $(\xi_2,\eta_3)\neq(0,0)$ with
$\xi_2^t\eta_3=0$.

By Lemma~\ref{lem:zero-inner-product-count}, applied with $l$ in place
of $k$, the number of pairs $(\xi_2,\eta_3)\in \mathbb{F}_q^l\times\mathbb{F}_q^l$
satisfying $\xi_2^t\eta_3=0$ is
\[
q^{l-1}(q^l+q-1).
\]
Excluding the pair $(0,0)$, the number of nonzero such pairs is
\[
q^{l-1}(q^l+q-1)-1.
\]
Therefore the total number of sixtuples is
\[
\begin{aligned}
& q^{2l}q^{k-1}(q^k-1)
+
\left(q^{l-1}(q^l+q-1)-1\right)q^{2k}q^{2l-1} \\
&=
q^{k+2l-1}
\left(
q^{k+2l-1}+q^{k+l}-q^{k+l-1}-1
\right).
\end{aligned}
\]
\end{proof}

\begin{lemma}
\label{lem:sixtuple-nonzero-cross-zero-total-count}
Let $k, l\geq0$ and fix $\omega\in\mathbb{F}_q^\times$.
Then the number of sixtuples
$(\xi_1,\eta_1,\xi_2,\eta_2,\xi_3,\eta_3)
\in
\mathbb{F}_q^k\times\mathbb{F}_q^k
\times
\mathbb{F}_q^l\times\mathbb{F}_q^l
\times
\mathbb{F}_q^l\times\mathbb{F}_q^l$
satisfying
$\xi_2^t\eta_3 = \omega$ and
$\xi_1^t\eta_1+\xi_2^t\eta_2+\xi_3^t\eta_3=0$
is
\[
q^{2k+3l-2}(q^l-1).
\]
\end{lemma}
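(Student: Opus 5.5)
We count by first fixing the pair $(\xi_2,\eta_3)\in\mathbb{F}_q^l\times\mathbb{F}_q^l$ with $\xi_2^t\eta_3=\omega$, exactly as in the proof of Lemma~\ref{lem:sixtuple-zero-cross-fixed-nonzero-sum-count}, and then counting the remaining variables $(\xi_1,\eta_1,\eta_2,\xi_3)$. Since $\omega\neq0$, the case $l=0$ gives no pairs, which matches the claimed count $q^{2k-2}(q^0-1)=0$. So we may assume $l\geq1$. By Lemma~\ref{lem:nonzero-inner-product-count}, applied in dimension $l$, the number of admissible pairs $(\xi_2,\eta_3)$ is $q^{l-1}(q^l-1)$.

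For each such pair we automatically have $(\xi_2,\eta_3)\neq(0,0)$, because $\omega\neq 0$. Hence the map
\[
(\eta_2,\xi_3)\longmapsto \xi_2^t\eta_2+\xi_3^t\eta_3
\]
is a nonzero linear functional on $\mathbb{F}_q^l\times\mathbb{F}_q^l$. Its fibers therefore all have cardinality $q^{2l-1}$.

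Now fix $(\xi_1,\eta_1)\in\mathbb{F}_q^k\times\mathbb{F}_q^k$ arbitrarily; there are $q^{2k}$ choices. The second condition then prescribes
\[
\xi_2^t\eta_2+\xi_3^t\eta_3=-\xi_1^t\eta_1,
\]
which is a single fiber of the functional above. So for each choice of $(\xi_1,\eta_1)$ there are exactly $q^{2l-1}$ choices of $(\eta_2,\xi_3)$.

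Multiplying the three counts gives
\[
q^{l-1}(q^l-1)\cdot q^{2k}\cdot q^{2l-1}=q^{2k+3l-2}(q^l-1),
\]
which is the claimed formula.

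The argument is entirely routine. The only point needing care is noting that $\omega\neq0$ forces $(\xi_2,\eta_3)\neq(0,0)$. This removes the case split that appeared in the previous lemma and makes the $k=0$ case need no separate treatment.
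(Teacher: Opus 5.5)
Your proof is correct and follows essentially the same argument as the paper. You fix $(\xi_2,\eta_3)$ with $\xi_2^t\eta_3=\omega$ (counted by Lemma~\ref{lem:nonzero-inner-product-count}), observe that $\omega\neq0$ forces $(\xi_2,\eta_3)\neq(0,0)$ so that $(\eta_2,\xi_3)\mapsto\xi_2^t\eta_2+\xi_3^t\eta_3$ has fibers of size $q^{2l-1}$, and multiply by the $q^{2k}$ free choices of $(\xi_1,\eta_1)$, with the $l=0$ case dismissed separately exactly as the paper does.
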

\begin{proof}
The case $l=0$ is immediate, so assume $l\geq1$.
By Lemma~\ref{lem:nonzero-inner-product-count}, applied with $l$ in place
of $k$, the number of pairs $(\xi_2,\eta_3)\in\mathbb{F}_q^l\times\mathbb{F}_q^l$
satisfying
$\xi_2^t\eta_3=\omega$
is
\[
q^{l-1}(q^l-1).
\]

Fix such a pair $(\xi_2,\eta_3)$. Since $\omega\neq 0$, we have
$(\xi_2,\eta_3)\neq(0,0)$. For fixed
$(\xi_1,\eta_1)\in\mathbb{F}_q^k\times\mathbb{F}_q^k,$
the condition
\[
\xi_1^t\eta_1+\xi_2^t\eta_2+\xi_3^t\eta_3=0
\]
is equivalent to
\[
\xi_2^t\eta_2+\xi_3^t\eta_3=-\xi_1^t\eta_1.
\]
The map
\[
\mathbb{F}_q^l\times\mathbb{F}_q^l\longrightarrow \mathbb{F}_q,
\qquad
(\eta_2,\xi_3)\longmapsto \xi_2^t\eta_2+\xi_3^t\eta_3
\]
is a nonzero linear functional. Hence each fiber has cardinality
\[
q^{2l-1}.
\]
Thus, for each fixed $(\xi_1,\eta_1)$, there are $q^{2l-1}$ choices
of $(\eta_2,\xi_3)$. Since there are $q^{2k}$ choices of
$(\xi_1,\eta_1)$, the number of choices for
$(\xi_1,\eta_1,\eta_2,\xi_3)$ is
\[
q^{2k}q^{2l-1}.
\]

Multiplying by the number of choices of $(\xi_2,\eta_3)$, we obtain
\[
q^{l-1}(q^l-1)q^{2k}q^{2l-1}
=
q^{2k+3l-2}(q^l-1).
\]
\end{proof}

\begin{lemma}
\label{lem:sixtuple-zero-constrained-count}
Let $k,l\geq 0$.
Then the number of sixtuples
$(\xi_1,\eta_1,\xi_2,\eta_2,\xi_3,\eta_3)
\in
\mathbb{F}_q^k\times\mathbb{F}_q^k
\times
\mathbb{F}_q^l\times\mathbb{F}_q^l
\times
\mathbb{F}_q^l\times\mathbb{F}_q^l$
satisfying
$\xi_2^t\eta_3 = 0$ and
$\xi_1^t\eta_1+\xi_2^t\eta_2+\xi_3^t\eta_3=0$
is
\[
q^{k+2l-1}(q^{k+2l-1}+q^{k+l}-q^{k+l-1}+q-1).
\]
\end{lemma}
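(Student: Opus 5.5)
The plan is to follow the structure of the proof of Lemma~\ref{lem:sixtuple-zero-cross-fixed-nonzero-sum-count}, with $\omega$ replaced by $0$. We first fix a pair $(\xi_2,\eta_3)\in\mathbb F_q^l\times\mathbb F_q^l$ with $\xi_2^t\eta_3=0$. We then count the remaining variables $(\xi_1,\eta_1,\eta_2,\xi_3)$ that satisfy
\[
\xi_1^t\eta_1+\xi_2^t\eta_2+\xi_3^t\eta_3=0.
\]
The case $k=l=0$ is immediate (the count is $1$, and the formula also gives $q^{-1}(q^{-1}+1-q^{-1}+q-1)=1$ after simplification). So we may assume $k+l\ge 1$.

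\emph{Case $(\xi_2,\eta_3)=(0,0)$.} The condition reduces to $\xi_1^t\eta_1=0$. By Lemma~\ref{lem:zero-inner-product-count} this has $q^{k-1}(q^k+q-1)$ solutions $(\xi_1,\eta_1)$. The vectors $\eta_2,\xi_3$ are free, which gives $q^{2l}$ further choices. This case contributes $q^{2l}q^{k-1}(q^k+q-1)$.

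\emph{Case $(\xi_2,\eta_3)\neq(0,0)$.} For each fixed $(\xi_1,\eta_1)$, the map $(\eta_2,\xi_3)\mapsto\xi_2^t\eta_2+\xi_3^t\eta_3$ is a nonzero linear functional. Its fiber over $-\xi_1^t\eta_1$ therefore has $q^{2l-1}$ elements. So each such pair contributes $q^{2k}q^{2l-1}$. By Lemma~\ref{lem:zero-inner-product-count} in dimension $l$, the number of such nonzero pairs is $q^{l-1}(q^l+q-1)-1$.

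Adding the two cases, the total is
\[
q^{k+2l-1}(q^k+q-1)+q^{2k+2l-1}\bigl(q^{l-1}(q^l+q-1)-1\bigr).
\]
It remains to simplify this. Factoring out $q^{k+2l-1}$, the bracket becomes
\[
q^k+q-1+q^{k}\bigl(q^{2l-1}+q^{l}-q^{l-1}-1\bigr)=q^{k+2l-1}+q^{k+l}-q^{k+l-1}+q-1,
\]
which is the claimed formula.

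The only point needing care is the boundary case $l=0$ (or $k=0$). When $l=0$ the second case is void. The expression $q^{l-1}(q^l+q-1)-1$ then equals $q^{-1}\cdot q-1=0$, so the formula remains valid after simplification, in line with the convention stated at the start of the appendix. Since this is the main (mild) obstacle, I would check it explicitly. Everything else is the routine algebra shown above.
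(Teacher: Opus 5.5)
Your argument is correct, but it is organized differently from the paper's. You count directly: you split on whether $(\xi_2,\eta_3)$ vanishes, mirroring the proof of Lemma~\ref{lem:sixtuple-zero-cross-fixed-nonzero-sum-count} with $\omega=0$. The only change is that in the stratum $(\xi_2,\eta_3)=(0,0)$ you invoke Lemma~\ref{lem:zero-inner-product-count} in place of Lemma~\ref{lem:nonzero-inner-product-count}. Your simplification to $q^{k+2l-1}(q^{k+2l-1}+q^{k+l}-q^{k+l-1}+q-1)$ is right, and so is your handling of $l=0$ and $k=l=0$.

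The paper argues by complement instead. It first counts all sixtuples with $\xi_2^t\eta_3=0$, which gives $q^{2k+3l-1}(q^l+q-1)$. It then subtracts $q-1$ copies of the count from Lemma~\ref{lem:sixtuple-zero-cross-fixed-nonzero-sum-count}, using that the value of $\xi_1^t\eta_1+\xi_2^t\eta_2+\xi_3^t\eta_3$ partitions this set.

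What each route buys: the paper's version is shorter once the preceding lemma is in hand, but it makes the $\omega=0$ count depend on it. Yours is self-contained, needing only the one-block counts. It also shows why the two formulas differ by exactly $q^{k+2l}$: the whole difference comes from the $(\xi_2,\eta_3)=(0,0)$ stratum, where $\xi_1^t\eta_1=0$ has $q^{k}$ more solutions than $\xi_1^t\eta_1=\omega\neq0$.
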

\begin{proof}
The case $k=l=0$ is immediate, so assume $k+l\geq1$.
We first count all sixtuples satisfying only the condition
\[
\xi_2^t\eta_3=0.
\]
By Lemma~\ref{lem:zero-inner-product-count}, applied with $l$ in place of
$k$, the number of pairs
$(\xi_2,\eta_3)\in\mathbb{F}_q^l\times\mathbb{F}_q^l$ satisfying
$\xi_2^t\eta_3=0$ is
\[
q^{l-1}(q^l + q - 1).
\]
The remaining variables $\xi_1,\eta_1,\eta_2,\xi_3$ are arbitrary.
Hence the total number of sixtuples satisfying $\xi_2^t\eta_3=0$ is
\[
q^{2k+2l} \cdot q^{l-1}(q^l + q - 1) = q^{2k+3l-1}(q^l + q - 1).
\]

For each fixed nonzero value
$\omega\in\mathbb{F}_q^\times$, Lemma~\ref{lem:sixtuple-zero-cross-fixed-nonzero-sum-count}
shows that the number of such sixtuples satisfying
\[
\xi_1^t\eta_1+\xi_2^t\eta_2+\xi_3^t\eta_3=\omega
\]
is
\[
q^{k+2l-1}
\left(q^{k+2l-1}+q^{k+l}-q^{k+l-1}-1\right).
\]
Since the possible values of
\[
\xi_1^t\eta_1+\xi_2^t\eta_2+\xi_3^t\eta_3
\]
partition the set of sixtuples satisfying $\xi_2^t\eta_3=0$, the
number with sum equal to $0$ is
\[
\begin{aligned}
& q^{2k+3l-1}(q^l + q - 1) \\
&\quad
-(q-1)q^{k+2l-1}
\left(q^{k+2l-1}+q^{k+l}-q^{k+l-1}-1\right).
\end{aligned}
\]
Simplifying, this becomes
\[
q^{k+2l-1}
\left(q^{k+2l-1}+q^{k+l}-q^{k+l-1}+q-1\right).
\]
\end{proof}

\begin{lemma}
\label{lem:sixtuple-zero-cross-nonzero-sum-count}
Let $k,l\geq 0$.
Then the number of sixtuples
$(\xi_1,\eta_1,\xi_2,\eta_2,\xi_3,\eta_3)
\in
\mathbb{F}_q^k\times\mathbb{F}_q^k
\times
\mathbb{F}_q^l\times\mathbb{F}_q^l
\times
\mathbb{F}_q^l\times\mathbb{F}_q^l$
satisfying
$\xi_2^t\eta_3 = 0$
and
$\xi_1^t\eta_1+\xi_2^t\eta_2+\xi_3^t\eta_3\neq 0$
is
\[
q^{k+2l-1}(q-1)
\left(
q^{k+2l-1}+q^{k+l}-q^{k+l-1}-1
\right).
\]
\end{lemma}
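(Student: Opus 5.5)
The count follows by complement inside the set of sixtuples with $\xi_2^t\eta_3=0$, exactly as in the proof of Lemma~\ref{lem:sixtuple-zero-constrained-count}. Partition these sixtuples according to the value of the total sum $\xi_1^t\eta_1+\xi_2^t\eta_2+\xi_3^t\eta_3\in\mathbb F_q$. The sixtuples we want are the union over $\omega\in\mathbb F_q^\times$ of the fibres with sum equal to $\omega$.

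By Lemma~\ref{lem:sixtuple-zero-cross-fixed-nonzero-sum-count}, each such fibre has the same cardinality
\[
q^{k+2l-1}\left(q^{k+2l-1}+q^{k+l}-q^{k+l-1}-1\right),
\]
which does not depend on $\omega$. Summing over the $q-1$ nonzero values of $\omega$ therefore gives the stated formula directly.

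As a consistency check, one can alternatively subtract the count of Lemma~\ref{lem:sixtuple-zero-constrained-count} from the total $q^{2k+3l-1}(q^l+q-1)$ computed in that proof. This is the same identity read backwards, so it agrees.

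For the degenerate case $k=l=0$ there are no sixtuples with nonzero sum, and the formula gives $q^{-1}(q-1)(q^{-1}+1-q^{-1}-1)=0$, consistent with the convention of interpreting the expression after simplification.

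There is essentially no obstacle here. The only point needing care is that the fibre count in Lemma~\ref{lem:sixtuple-zero-cross-fixed-nonzero-sum-count} is uniform in $\omega\in\mathbb F_q^\times$, and this is part of that lemma's statement.
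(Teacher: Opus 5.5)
Your proposal is correct and is the same argument the paper uses. The paper also writes the nonvanishing condition as a disjoint union over $\omega\in\mathbb{F}_q^\times$, and then multiplies the $\omega$-independent count from Lemma~\ref{lem:sixtuple-zero-cross-fixed-nonzero-sum-count} by $q-1$.
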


\begin{proof}
The case $k=l=0$ is immediate, so assume $k+l\geq1$.
For each $\omega\in\mathbb{F}_q^\times$, by
Lemma~\ref{lem:sixtuple-zero-cross-fixed-nonzero-sum-count}, the number of
sixtuples satisfying $\xi_2^t\eta_3=0$
and $\xi_1^t\eta_1+\xi_2^t\eta_2+\xi_3^t\eta_3=\omega$
is
\[
q^{k+2l-1}
\left(
q^{k+2l-1}+q^{k+l}-q^{k+l-1}-1
\right).
\]
The condition
\[
\xi_1^t\eta_1+\xi_2^t\eta_2+\xi_3^t\eta_3\neq 0
\]
is the disjoint union of the conditions obtained by letting $\omega\in\mathbb{F}_q^\times$.
Since $\mathbb{F}_q^\times$ has $q-1$ elements, the desired number is
\[
(q-1)q^{k+2l-1}
\left(
q^{k+2l-1}+q^{k+l}-q^{k+l-1}-1
\right).
\]
\end{proof}

\begin{lemma}
\label{lem:sixtuple-zero-cross-not-fixed-nonzero-sum-count}
Let $k,l\geq 0$ and fix $\omega\in\mathbb{F}_q^\times$.
Then the number of sixtuples
$(\xi_1,\eta_1,\xi_2,\eta_2,\xi_3,\eta_3)
\in
\mathbb{F}_q^k\times\mathbb{F}_q^k
\times
\mathbb{F}_q^l\times\mathbb{F}_q^l
\times
\mathbb{F}_q^l\times\mathbb{F}_q^l$
satisfying
$\xi_2^t\eta_3 = 0$
and
$\xi_1^t\eta_1+\xi_2^t\eta_2+\xi_3^t\eta_3\neq \omega$
is
\[
q^{k+2l-1}
\left(
q^{k+2l}-q^{k+2l-1}+q^{k+l+1}
-2q^{k+l}+q^{k+l-1}+1
\right).
\]
\end{lemma}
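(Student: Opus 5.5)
The count follows by complementation inside the set of sixtuples satisfying $\xi_2^t\eta_3=0$. There are two ingredients, both already available. First, we count all sixtuples with $\xi_2^t\eta_3=0$ and no condition on the total sum. Lemma~\ref{lem:zero-inner-product-count} in dimension $l$ gives the number of admissible pairs $(\xi_2,\eta_3)$. The remaining variables $\xi_1,\eta_1,\eta_2,\xi_3$ are free. Hence the total is
\[
q^{2k+2l}\cdot q^{l-1}(q^l+q-1)=q^{2k+3l-1}(q^l+q-1),
\]
exactly as in the proof of Lemma~\ref{lem:sixtuple-zero-constrained-count}. Second, Lemma~\ref{lem:sixtuple-zero-cross-fixed-nonzero-sum-count} counts the sixtuples with $\xi_2^t\eta_3=0$ and total sum equal to the fixed $\omega\in\mathbb F_q^\times$:
\[
q^{k+2l-1}\left(q^{k+2l-1}+q^{k+l}-q^{k+l-1}-1\right).
\]

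Since the conditions ``sum $=\omega$'' and ``sum $\neq\omega$'' partition the sixtuples with $\xi_2^t\eta_3=0$, the desired number is the difference of these two counts. The case $k=l=0$ is disposed of separately. There the only sixtuple is the empty one, whose sum is $0\neq\omega$, so the count is $1$. The claimed formula gives $q^{-1}(1-q^{-1}+q-2+q^{-1}+1)=1$, so it agrees.

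For $k+l\ge1$, it remains to simplify. We factor $q^{k+2l-1}$ out of both terms. The first term becomes
\[
q^{k+2l-1}\cdot q^{k+l}(q^l+q-1)=q^{k+2l-1}\left(q^{k+2l}+q^{k+l+1}-q^{k+l}\right).
\]
Subtracting $q^{k+2l-1}+q^{k+l}-q^{k+l-1}-1$ inside the bracket leaves
\[
q^{k+2l}-q^{k+2l-1}+q^{k+l+1}-2q^{k+l}+q^{k+l-1}+1,
\]
which is exactly the bracket in the claimed formula.

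The only point needing care is the boundary cases with apparent negative exponents, namely $l=0$ or $k=0$. The cited lemmas are stated for all $k,l\ge0$ with the appendix's convention of interpreting formulas after simplification. So these cases are covered automatically once the algebraic identity above is checked as an identity of Laurent polynomials in $q$. I expect no real obstacle beyond this bookkeeping.
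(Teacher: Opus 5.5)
Your proposal is correct and is the paper's own argument: count all sixtuples with $\xi_2^t\eta_3=0$ using Lemma~\ref{lem:zero-inner-product-count}, subtract the count from Lemma~\ref{lem:sixtuple-zero-cross-fixed-nonzero-sum-count}, and simplify. The one addition is your explicit verification of the case $k=l=0$, which the paper simply calls immediate.
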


\begin{proof}
The case $k=l=0$ is immediate, so assume $k+l\geq1$.
We first count the number of sixtuples satisfying only the condition
$\xi_2^t\eta_3=0$.
By Lemma~\ref{lem:zero-inner-product-count}, applied with $l$ in place
of $k$, the number of pairs
$(\xi_2,\eta_3)\in\mathbb{F}_q^l\times\mathbb{F}_q^l$
satisfying
$\xi_2^t\eta_3=0$
is
\[
q^{l-1}(q^l+q-1).
\]
For each such pair $(\xi_2,\eta_3)$, the remaining vectors
$\xi_1,\eta_1,\eta_2,\xi_3$
are arbitrary. Hence there are
$q^{2k+2l}$
choices for them. Therefore the number of sixtuples satisfying
$\xi_2^t\eta_3=0$ is
\[
q^{2k+2l}q^{l-1}(q^l+q-1).
\]

On the other hand, by
Lemma~\ref{lem:sixtuple-zero-cross-fixed-nonzero-sum-count}, the number of
sixtuples satisfying
$\xi_2^t\eta_3=0$
and
$\xi_1^t\eta_1+\xi_2^t\eta_2+\xi_3^t\eta_3=\omega$
is
\[
q^{k+2l-1}
\left(
q^{k+2l-1}+q^{k+l}-q^{k+l-1}-1
\right).
\]
Thus the desired number is
\[
\begin{aligned}
& q^{2k+2l}q^{l-1}(q^l+q-1)
-
q^{k+2l-1}
\left(
q^{k+2l-1}+q^{k+l}-q^{k+l-1}-1
\right) \\
&=
q^{k+2l-1}
\left(
q^{k+2l}-q^{k+2l-1}+q^{k+l+1}
-2q^{k+l}+q^{k+l-1}+1
\right).
\end{aligned}
\]
\end{proof}

\begin{lemma}
\label{lem:sixtuple-nonzero-cross-nonzero-sum-count}
Let $k, l\geq 0$ and fix $\omega\in\mathbb{F}_q^\times$.
Then the number of sixtuples
$(\xi_1,\eta_1,\xi_2,\eta_2,\xi_3,\eta_3)
\in
\mathbb{F}_q^k\times\mathbb{F}_q^k
\times
\mathbb{F}_q^l\times\mathbb{F}_q^l
\times
\mathbb{F}_q^l\times\mathbb{F}_q^l$
satisfying
$\xi_2^t\eta_3 = \omega$
and
$\xi_1^t\eta_1+\xi_2^t\eta_2+\xi_3^t\eta_3\neq 0$
is
\[
q^{2k+3l-2}(q-1)(q^l-1).
\]
\end{lemma}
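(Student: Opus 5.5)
The plan is to derive this count from the ones already established, by complementation with respect to the total number of sixtuples satisfying $\xi_2^t\eta_3=\omega$. This is the same pattern used for Lemma~\ref{lem:sixtuple-zero-cross-nonzero-sum-count} and Lemma~\ref{lem:sixtuple-zero-cross-not-fixed-nonzero-sum-count}. The case $l=0$ is immediate: no pair $(\xi_2,\eta_3)$ satisfies $\xi_2^t\eta_3=\omega\neq0$, and both sides vanish because of the factor $q^l-1$. So I assume $l\ge1$.

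First, I count the sixtuples satisfying only $\xi_2^t\eta_3=\omega$. By Lemma~\ref{lem:nonzero-inner-product-count} in dimension $l$, there are $q^{l-1}(q^l-1)$ pairs $(\xi_2,\eta_3)$. The remaining variables $\xi_1,\eta_1,\eta_2,\xi_3$ are free, giving a total of
\[
q^{2k+2l}\cdot q^{l-1}(q^l-1)=q^{2k+3l-1}(q^l-1).
\]

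Next, I subtract the sixtuples whose total sum equals $0$. By Lemma~\ref{lem:sixtuple-nonzero-cross-zero-total-count}, there are exactly $q^{2k+3l-2}(q^l-1)$ of these. The difference is
\[
q^{2k+3l-1}(q^l-1)-q^{2k+3l-2}(q^l-1)=q^{2k+3l-2}(q-1)(q^l-1),
\]
which is the claimed formula.

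As a cross-check, one can argue directly. Once $(\xi_2,\eta_3)\neq(0,0)$ is fixed, the map $(\eta_2,\xi_3)\mapsto\xi_2^t\eta_2+\xi_3^t\eta_3$ is a nonzero linear functional. Each of its $q-1$ nonzero target values $-\xi_1^t\eta_1+c$ with $c\neq0$ has fiber of size $q^{2l-1}$. This gives
\[
q^{l-1}(q^l-1)\cdot q^{2k}\cdot(q-1)q^{2l-1},
\]
which agrees with the first computation. There is no real obstacle here; the only point needing care is the degenerate case $l=0$, which is handled above.
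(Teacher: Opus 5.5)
Your proposal is correct and follows the paper's proof exactly: count the $q^{2k+3l-1}(q^l-1)$ sixtuples with $\xi_2^t\eta_3=\omega$ using Lemma~\ref{lem:nonzero-inner-product-count}, then subtract the $q^{2k+3l-2}(q^l-1)$ sixtuples with total sum zero from Lemma~\ref{lem:sixtuple-nonzero-cross-zero-total-count}. Your direct cross-check is also valid, with one wording fix: the $q-1$ target values $-\xi_1^t\eta_1+c$ with $c\neq0$ are the admissible values, not necessarily nonzero ones.
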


\begin{proof}
The case $l=0$ is immediate, so assume $l\geq1$.
By Lemma~\ref{lem:nonzero-inner-product-count}, applied with $l$ in
place of $k$, the number of pairs
$(\xi_2,\eta_3)\in \mathbb{F}_q^l\times\mathbb{F}_q^l$
satisfying
$\xi_2^t\eta_3=\omega$
is
\[
q^{l-1}(q^l-1).
\]
For each such pair $(\xi_2,\eta_3)$, the remaining vectors
$\xi_1,\eta_1,\eta_2,\xi_3$
are arbitrary if we impose only the condition $\xi_2^t\eta_3=\omega$.
Hence the number of sixtuples satisfying $\xi_2^t\eta_3=\omega$ is
\[
q^{l-1}(q^l-1)q^{2k+2l}
=
q^{2k+3l-1}(q^l-1).
\]

On the other hand, by
Lemma~\ref{lem:sixtuple-nonzero-cross-zero-total-count}, the number of
sixtuples satisfying
$\xi_2^t\eta_3=\omega$
and
$\xi_1^t\eta_1+\xi_2^t\eta_2+\xi_3^t\eta_3=0$
is
\[
q^{2k+3l-2}(q^l-1).
\]
Therefore the number of sixtuples satisfying
$\xi_2^t\eta_3=\omega$
and
$\xi_1^t\eta_1+\xi_2^t\eta_2+\xi_3^t\eta_3\neq 0$
is
\[
q^{2k+3l-1}(q^l-1)
-
q^{2k+3l-2}(q^l-1)
=
q^{2k+3l-2}(q-1)(q^l-1).
\]
\end{proof}

\subsection*{Counts with a zero condition}
\begin{lemma}
\label{lem:sixtuple-nonzero-sum-xi-one-or-eta-two-zero-count}
Let $k,l\geq 0$.
Then the number of sixtuples
$(\xi_1,\eta_1,\xi_2,\eta_2,\xi_3,\eta_3)
\in
\mathbb{F}_q^k\times\mathbb{F}_q^k
\times
\mathbb{F}_q^k\times\mathbb{F}_q^k
\times
\mathbb{F}_q^l\times\mathbb{F}_q^l$
satisfying
$\xi_1^t\eta_1+\xi_2^t\eta_2+\xi_3^t\eta_3 \neq 0$
and such that at least one of $\xi_1$ and $\eta_2$ is zero is
\[
q^{2k+l-1}(q-1)\left(2q^{k+l}-q^l-1\right).
\]
\end{lemma}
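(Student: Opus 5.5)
Count by inclusion--exclusion on the condition ``$\xi_1=0$ or $\eta_2=0$''. Let $S$ be the set of sixtuples with $\xi_1^t\eta_1+\xi_2^t\eta_2+\xi_3^t\eta_3\neq0$. The desired number is
\[
|S\cap\{\xi_1=0\}|+|S\cap\{\eta_2=0\}|-|S\cap\{\xi_1=0,\ \eta_2=0\}|.
\]
Each of these three sets reduces to a count already established in the appendix, because setting a vector to zero kills one inner-product block and leaves the remaining variables free.

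First, if $\xi_1=0$, the sum becomes $\xi_2^t\eta_2+\xi_3^t\eta_3$, and $\eta_1\in\mathbb F_q^k$ is free. By Lemma~\ref{lem:two-block-nonzero-inner-product-total-count} with dimensions $(k,l)$, the first term is
\[
q^k\cdot q^{k+l-1}(q^{k+l}-1)(q-1).
\]
Symmetrically, if $\eta_2=0$, the sum becomes $\xi_1^t\eta_1+\xi_3^t\eta_3$ and $\xi_2$ is free, so the second term has the same value. If both vanish, the sum is $\xi_3^t\eta_3$, and $\eta_1,\xi_2$ are free. By Lemma~\ref{lem:nonzero-inner-product-total-count} in dimension $l$, the third term is
\[
q^{2k}\cdot q^{l-1}(q^l-1)(q-1).
\]

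Combining gives
\[
(q-1)\bigl(2q^{2k+l-1}(q^{k+l}-1)-q^{2k+l-1}(q^l-1)\bigr)
=q^{2k+l-1}(q-1)\bigl(2q^{k+l}-q^l-1\bigr),
\]
which is the claimed formula. The degenerate cases $k=0$ or $l=0$ are covered by the same computation, since the cited lemmas hold for zero dimensions. When $k=0$, the conditions $\xi_1=0$ and $\eta_2=0$ hold automatically and all three terms coincide; the formula then gives $q^{l-1}(q-1)(q^l-1)$, which is correct.

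The only real obstacle is the bookkeeping: checking which variables become free in each term, and confirming that the cited nonzero-sum counts apply to the collapsed sums. The remaining algebra is routine.
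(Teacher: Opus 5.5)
Your proposal is correct and matches the paper's proof: inclusion--exclusion over the conditions $\xi_1=0$ and $\eta_2=0$. The two single-condition sets are counted by Lemma~\ref{lem:two-block-nonzero-inner-product-total-count} in dimensions $(k,l)$, and their intersection by Lemma~\ref{lem:nonzero-inner-product-total-count} in dimension $l$, followed by the same simplification. Your extra check of the degenerate case $k=0$ is correct and consistent with the paper's conventions for zero-dimensional spaces.
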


\begin{proof}
The case $k=l=0$ is immediate, so assume $k+l\geq1$.
Let $A$ be the set of such sixtuples satisfying $\xi_1=0$, and let
$B$ be the set of such sixtuples satisfying $\eta_2=0$. We compute
$|A\cup B|$ by inclusion--exclusion.

First suppose that $\xi_1=0$. Then $\eta_1$ is arbitrary, giving
$q^k$ choices, and the nonvanishing condition becomes
\[
\xi_2^t\eta_2+\xi_3^t\eta_3\neq 0.
\]
By Lemma~\ref{lem:two-block-nonzero-inner-product-total-count}, applied
in dimensions $k$ and $l$, the number of choices of
$(\xi_2,\eta_2,\xi_3,\eta_3)
\in
\mathbb{F}_q^k\times\mathbb{F}_q^k
\times
\mathbb{F}_q^l\times\mathbb{F}_q^l$
satisfying this condition is
\[
q^{k+l-1}(q^{k+l}-1)(q-1).
\]
Hence
\[
|A|
=
q^k q^{k+l-1}(q^{k+l}-1)(q-1)
=
q^{2k+l-1}(q^{k+l}-1)(q-1).
\]

Similarly, if $\eta_2=0$, then $\xi_2$ is arbitrary and the condition
becomes
\[
\xi_1^t\eta_1+\xi_3^t\eta_3\neq 0.
\]
Again by Lemma~\ref{lem:two-block-nonzero-inner-product-total-count}, we
obtain
\[
|B|
=
q^{2k+l-1}(q^{k+l}-1)(q-1).
\]

It remains to compute $|A\cap B|$. In this case, $\xi_1=0$ and
$\eta_2=0$. Thus $\eta_1$ and $\xi_2$ are arbitrary, giving
$q^{2k}$ choices, and the nonvanishing condition reduces to
\[
\xi_3^t\eta_3\neq 0.
\]
By Lemma~\ref{lem:nonzero-inner-product-total-count}, applied in dimension $l$, the number of choices of
$(\xi_3,\eta_3)\in\mathbb{F}_q^l\times\mathbb{F}_q^l$ satisfying
$\xi_3^t\eta_3\neq 0$ is
\[
q^{l-1}(q^l-1)(q-1).
\]
Therefore
\[
|A\cap B|
=
q^{2k}q^{l-1}(q^l-1)(q-1)
=
q^{2k+l-1}(q^l-1)(q-1).
\]

By inclusion--exclusion, the desired number is
\[
\begin{aligned}
|A\cup B|
&= |A|+|B|-|A\cap B| \\
&=
2q^{2k+l-1}(q^{k+l}-1)(q-1)
-
q^{2k+l-1}(q^l-1)(q-1) \\
&=
q^{2k+l-1}(q-1)
\left(2q^{k+l}-q^l-1\right).
\end{aligned}
\]
\end{proof}

\begin{lemma}
\label{lem:sixtuple-not-fixed-sum-xi-one-or-eta-two-zero-count}
Let $k,l\geq 0$ and fix $\omega\in\mathbb{F}_q^\times$.
Then the number of sixtuples
$(\xi_1,\eta_1,\xi_2,\eta_2,\xi_3,\eta_3)
\in
\mathbb{F}_q^k\times\mathbb{F}_q^k
\times
\mathbb{F}_q^k\times\mathbb{F}_q^k
\times
\mathbb{F}_q^l\times\mathbb{F}_q^l$
satisfying
$\xi_1^t\eta_1+\xi_2^t\eta_2+\xi_3^t\eta_3 \neq \omega$
and such that at least one of $\xi_1$ and $\eta_2$ is zero is
\[
q^{2k+l-1}
\left(
2q^{k+l+1}-2q^{k+l}-q^{l+1}+q^l+1
\right).
\]
\end{lemma}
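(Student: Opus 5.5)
The count is obtained by complementation inside the set of sixtuples with at least one of $\xi_1,\eta_2$ equal to zero. Let $T$ be the set of all sixtuples in $\mathbb{F}_q^k\times\mathbb{F}_q^k\times\mathbb{F}_q^k\times\mathbb{F}_q^k\times\mathbb{F}_q^l\times\mathbb{F}_q^l$ with $\xi_1=0$ or $\eta_2=0$. Let $T_\omega\subseteq T$ be the subset on which $\xi_1^t\eta_1+\xi_2^t\eta_2+\xi_3^t\eta_3=\omega$. The desired number is then $|T|-|T_\omega|$.

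First, $|T|=2q^{3k+2l}-q^{2k+2l}$ by inclusion--exclusion. Each of the conditions $\xi_1=0$ and $\eta_2=0$ leaves $q^{3k+2l}$ free choices. Both together leave $q^{2k+2l}$.

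For $|T_\omega|$, the key observation is that a symmetry makes every nonzero value equally likely. Scaling $\eta_1,\eta_2,\eta_3$ by $c\in\mathbb{F}_q^\times$ preserves $T$, since the condition $\eta_2=0$ is scale-invariant and $\xi_1$ is untouched. This scaling multiplies the sum by $c$. Hence $|T_\omega|$ is independent of $\omega\in\mathbb{F}_q^\times$. Lemma~\ref{lem:sixtuple-nonzero-sum-xi-one-or-eta-two-zero-count} counts the elements of $T$ with nonzero sum, so dividing by $q-1$ gives
\[
|T_\omega|=q^{2k+l-1}\left(2q^{k+l}-q^l-1\right).
\]
Alternatively, one can redo the inclusion--exclusion of that lemma directly. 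In that version, Lemma~\ref{lem:nonzero-inner-product-count} in dimensions $k+l$ and $l$ replaces the "$\neq0$" counts. This gives
\[
|T_\omega|=2q^{k}\cdot q^{k+l-1}(q^{k+l}-1)-q^{2k}\cdot q^{l-1}(q^l-1),
\]
which agrees with the previous expression.

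Finally, subtract. The difference $2q^{3k+2l}-q^{2k+2l}-q^{2k+l-1}(2q^{k+l}-q^l-1)$ should be factored as $q^{2k+l-1}$ times $2q^{k+l+1}-q^{l+1}-2q^{k+l}+q^l+1$, which is the stated formula. The only point needing care is the degenerate cases $k=0$ or $l=0$, or $k=l=0$. When $k=l=0$, the set $T$ is the single empty tuple with sum $0\neq\omega$, so the count is $1$, matching the formula. In general the formulas hold after simplification, as in the paper's convention for apparent negative powers. The only real checkpoint is this final algebraic simplification; everything else is routine.
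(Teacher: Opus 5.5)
Your argument is correct, but it takes a different route from the paper.

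The paper runs inclusion--exclusion directly on the set with $\xi_1=0$ and the set with $\eta_2=0$, with the condition ``sum $\neq\omega$'' already imposed. It obtains $|A|=|B|=q^{2k+l-1}(q^{k+l+1}-q^{k+l}+1)$ and $|A\cap B|=q^{2k+l-1}(q^{l+1}-q^l+1)$ from the ``$\neq\omega$'' counts in Lemmas~\ref{lem:two-block-not-equal-nonzero-inner-product-count} and~\ref{lem:not-equal-nonzero-inner-product-count}. It is a line-by-line parallel of the proof of Lemma~\ref{lem:sixtuple-nonzero-sum-xi-one-or-eta-two-zero-count}.

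You instead pass to the complement inside $T$. You then use the scaling $(\eta_1,\eta_2,\eta_3)\mapsto c(\eta_1,\eta_2,\eta_3)$, which preserves $T$ and multiplies the sum by $c$, to show that the sum is equidistributed over $\mathbb{F}_q^\times$ on $T$. This reduces the statement to the preceding ``$\neq 0$'' lemma. That lemma is proved independently, so there is no circularity. Your arithmetic is right: $|T_\omega|=q^{2k+l-1}(2q^{k+l}-q^l-1)$, and $|T|-|T_\omega|$ factors to the stated expression.

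Your route buys economy and an explanation. The two appendix lemmas on ``$\neq\omega$'' counts, which the paper needs only for this statement, become superfluous. Also, inclusion--exclusion for $|T|$ and the scaling bijection are valid uniformly for all $k,l\ge 0$, so the degenerate cases need no separate handling; your $k=l=0$ check is a sanity check, not a required step. The paper's route is purely mechanical and keeps every appendix count a direct computation without appealing to a symmetry.
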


\begin{proof}
The case $k=l=0$ is immediate, so assume $k+l\geq1$.
Let $A$ be the set of such sixtuples with $\xi_1=0$, and let $B$ be
the set of such sixtuples with $\eta_2=0$. We compute
$|A\cup B|$ by inclusion--exclusion.

First suppose that $\xi_1=0$. Then $\eta_1$ is arbitrary, giving
$q^k$ choices, and the condition becomes
\[
\xi_2^t\eta_2+\xi_3^t\eta_3\neq \omega.
\]
By Lemma~\ref{lem:two-block-not-equal-nonzero-inner-product-count},
applied in dimensions $k$ and $l$, the number of choices of
$(\xi_2,\eta_2,\xi_3,\eta_3)
\in
\mathbb{F}_q^k\times\mathbb{F}_q^k
\times
\mathbb{F}_q^l\times\mathbb{F}_q^l$
satisfying this condition is
\[
q^{k+l-1}
\left(
q^{k+l+1}-q^{k+l}+1
\right).
\]
Hence
\[
|A|
=
q^k q^{k+l-1}
\left(
q^{k+l+1}-q^{k+l}+1
\right)
=
q^{2k+l-1}
\left(
q^{k+l+1}-q^{k+l}+1
\right).
\]

Similarly, if $\eta_2=0$, then $\xi_2$ is arbitrary, and the condition
becomes
\[
\xi_1^t\eta_1+\xi_3^t\eta_3\neq \omega.
\]
Again by Lemma~\ref{lem:two-block-not-equal-nonzero-inner-product-count},
we obtain
\[
|B|
=
q^{2k+l-1}
\left(
q^{k+l+1}-q^{k+l}+1
\right).
\]

It remains to compute $|A\cap B|$. In this case, $\xi_1=0$ and
$\eta_2=0$. Thus $\eta_1$ and $\xi_2$ are arbitrary, giving
$q^{2k}$ choices, and the condition reduces to
\[
\xi_3^t\eta_3\neq \omega.
\]
By Lemma~\ref{lem:not-equal-nonzero-inner-product-count}, applied in dimension $l$, the number of pairs
$(\xi_3,\eta_3)\in\mathbb{F}_q^l\times\mathbb{F}_q^l$
satisfying $\xi_3^t\eta_3\neq\omega$ is
\[
q^{l-1}
\left(
q^{l+1}-q^l+1
\right).
\]
Therefore
\[
|A\cap B|
=
q^{2k}q^{l-1}
\left(
q^{l+1}-q^l+1
\right)
=
q^{2k+l-1}
\left(
q^{l+1}-q^l+1
\right).
\]

By inclusion--exclusion, the desired number is
\[
\begin{aligned}
|A\cup B|
&= |A|+|B|-|A\cap B| \\
&=
2q^{2k+l-1}
\left(
q^{k+l+1}-q^{k+l}+1
\right)
-
q^{2k+l-1}
\left(
q^{l+1}-q^l+1
\right) \\
&=
q^{2k+l-1}
\left(
2q^{k+l+1}-2q^{k+l}-q^{l+1}+q^l+1
\right).
\end{aligned}
\]
\end{proof}

\section*{Acknowledgements}

The author would like to thank the anonymous referees for their careful
reading and valuable comments, which helped improve the presentation of
the paper. This work was supported by JSPS KAKENHI Grant Number
JP23K12953.

\section*{Data Availability Statement}
This article is purely theoretical and does not involve any datasets or generated data.

\bigskip

\noindent
Yuta Watanabe \\
Department of Mathematics Education \\
Aichi University of Education \\
1 Hirosawa, Igaya-cho, Kariya, Aichi 448-8542, Japan. \\
email: \texttt{ywatanabe@auecc.aichi-edu.ac.jp}
\end{document}